\tikzset{
  shift left/.style ={commutative diagrams/shift left={#1}},
  shift right/.style={commutative diagrams/shift right={#1}}
}
\theoremstyle{plain}
\newtheorem{theorem}{Theorem}[section]
\newtheorem{lemma}[theorem]{Lemma}
\newtheorem{proposition}[theorem]{Proposition}
\newtheorem{corollary}[theorem]{Corollary}
\theoremstyle{definition}
\newtheorem{definition}[theorem]{Definition}
\newtheorem{example}[theorem]{Example}
\newtheorem{remark}[theorem]{Remark}
\newtheorem{construction}[theorem]{Construction}
\newtheorem{observation}[theorem]{Observation}
\newtheorem{preamble}[theorem]{Preamble}
\newenvironment{xreftheorem}[1]{\def\thexref{\ref{#1}}
\begin{thexreftheorem}\itshape}{\end{thexreftheorem}}
\newtheorem*{thexreftheorem}{Theorem \thexref}
\newenvironment{xrefcorollary}[1]{\def\thexcoro{\ref{#1}}
\begin{thexrefcorollary}\itshape}{\end{thexrefcorollary}}
\newtheorem*{thexrefcorollary}{Corollary \thexcoro}
\newcommand{\exend}{\unskip\nobreak\hfill$\circ$}
\newcommand{\defend}{\unskip\nobreak\hfill$\triangleleft$}
\newcommand{\Z}{\mathbb Z}
\newcommand{\Q}{\mathbb Q}
\newcommand{\R}{\mathbb R}
\newcommand{\C}{\mathbb C}
\newcommand{\F}{\mathbb F}
\renewcommand{\epsilon}{\varepsilon}
\renewcommand{\phi}{\varphi}
\renewcommand{\tilde}{\widetilde}
\newcommand{\exit}{\Pi{}^{\operatorname{exit}}}
\newcommand{\XBS}{\overline{X}{}^{BS}}
\newcommand{\XRBS}{\overline{X}{}^{RBS}}
\begin{document}

\title[Stratified homotopy type and reductive Borel--Serre]{The stratified homotopy type of the reductive Borel--Serre compactification}
\author{Mikala Ørsnes Jansen}
\address{Department of Mathematical Sciences, University of Copenhagen, 2100 Copenhagen, Denmark.}\email{mikala@math.ku.dk}
\thanks{The author was supported by the European Research Council (ERC) under the European Unions Horizon 2020 research and innovation programme (grant agreement No 682922), and the Danish National Research Foundation through the Centre for Symmetry and Deformation (DNRF92) and the Copenhagen Centre for Geometry and Topology (DNRF151).}
\keywords{}

\begin{abstract}
We identify the exit path $\infty$-category of the reductive Borel--Serre compactification as the nerve of a $1$-category defined purely in terms of rational parabolic subgroups and their unipotent radicals. As immediate consequences, we identify the fundamental group of the reductive Borel--Serre compactification, recovering a result of Ji--Murty--Saper--Scherk, and we obtain a combinatorial incarnation of constructible complexes of sheaves on the reductive Borel--Serre compactification as elements in a derived functor category.
\end{abstract}

\maketitle

\tableofcontents

\section{Introduction}

\textbf{Background.}
Let $\mathbf{G}$ be a connected reductive linear algebraic group defined over $\Q$ whose centre is anisotropic over $\Q$. Let $\Gamma\leq \mathbf{G}(\Q)$ be a neat arithmetic group and consider the symmetric space $X$ of maximal compact subgroups of $\mathbf{G}(\R)$ on which $\Gamma$ acts by conjugation. The locally symmetric space $\Gamma\backslash X$ is a smooth manifold and a model for the classifying space of $\Gamma$; it is compact if and only if the $\Q$-rank of $\mathbf{G}$ is zero. The problem of compactifying such locally symmetric spaces has given rise to a number of different compactifications well suited for different purposes. In this paper we study the Borel--Serre and reductive Borel--Serre compactifications. 

\medskip

The \textit{Borel--Serre compactification} $\Gamma\backslash \XBS$, introduced in 1973 by Borel and Serre, is a compactification of $\Gamma\backslash X$ with the same homotopy type (\cite{BorelSerre}). This construction enabled Borel to calculate the rank of the K-groups $K_i(O_K)$, where $O_K$ is the ring of integers in a number field $K$ (\cite{Borel74}). Quillen also used the Borel--Serre compactification to show that these same K-groups, $K_i(O_K)$, are finitely generated (\cite{Quillen73a}).

\medskip

The \textit{reductive Borel--Serre compactification} $\Gamma\backslash \XRBS$ was introduced by Zucker in 1982 to facilitate the study of $L^2$-cohomology of $\Gamma\backslash X$ (\cite{Zucker82}, see also \cite{GoreskyHarderMacPherson}). It is a quotient of the Borel--Serre compactification, $\Gamma\backslash \XBS\rightarrow \Gamma \backslash \XRBS$, and it remedies the failure of $\Gamma\backslash \XBS$ to support $L^2$-partitions of unity. The reductive Borel--Serre compactification has been studied extensively and has come to play a central role in the theory of compactifications. It is well-suited for studying the $L^p$-cohomology of $\Gamma\backslash X$ (\cite{Zucker01}), it dominates all Satake compactifications (\cite[III.15.2]{BorelJi}), and it plays an important role in parametrising the continuous spectrum of $\Gamma\backslash X$ (\cite{JiMacPherson}). It is used to define weighted cohomology (\cite{GoreskyHarderMacPherson}) which is the main ingredient in the topological trace formula (\cite{GoreskyMacPherson92,GoreskyMacPherson03}) exploiting the fact that $L^2$-cohomology can be expressed locally on the reductive Borel--Serre compactification. It has moreover motivated the theory of $\mathscr{L}$-modules (\cite{Saper05a,Saper05b}) which is used to prove a conjecture of Rapoport (\cite{Rapoport}) and Goresky--MacPherson (\cite{GoreskyMacPherson88}) relating the intersection cohomology of the reductive Borel--Serre compactification with that of certain Satake compactifications.

\medskip

We will study these spaces as stratified topological spaces and determine their stratified homotopy type, or more precisely their exit path $\infty$-categories. The Borel--Serre compactification $\Gamma\backslash \XBS$ is naturally stratified as a manifold with corners over the poset of $\Gamma$-conjugacy classes of rational parabolic subgroups of $\mathbf{G}$. This stratification descends along the quotient map $\Gamma\backslash \XBS\rightarrow \Gamma\backslash \XRBS$, equipping the reductive Borel--Serre compactification with a natural stratification.

\medskip

Given a sufficiently nice stratified space $X$, one can define its \textit{exit path $\infty$-category} $\exit_\infty (X)$ (\cite[Definition A.6.2 and Theorem A.6.4]{LurieHA}); this is a natural analogue of the fundamental $\infty$-groupoid for topological spaces. Intuitively, the exit path $\infty$-category has as objects the points of the stratified space and as morphisms the paths which can only move ``upwards'' in the stratification, i.e. if $X_i\subset \overline{X_j}$ for two distinct strata $X_i, X_j\subset X$, then a path can move from $X_i$ to $X_j$, but not the other way. The higher simplices are stratum preserving homotopies between such ``exit paths'', and stratum preserving homotopies between such homotopies etc.

\medskip

The most important feature of the exit path $\infty$-category is that it classifies constructible sheaves, that is, sheaves which are locally constant on each stratum. This generalises the classical result that for a sufficiently nice topological space $X$, the monodromy functor gives an equivalence between representations of the fundamental groupoid and locally constant sheaves on $X$. It was observed by MacPherson that for stratified spaces, one can define an exit path category which in the same way classifies constructible sheaves ($1$-categorically). Treumann gave a $2$-categorical version of this result (\cite{Treumann}), and Lurie developed the $\infty$-categorical setting, defining the exit path $\infty$-category and generalising MacPherson's observation (\cite[Theorem A.9.3]{LurieHA}).

\medskip

\textbf{Main results.}
Let $\mathbf{G}$, $\Gamma$ and $X$ be as above and assume that $\mathbf{G}$ has positive $\Q$-rank so that $\Gamma\backslash X$ is non-compact. Let $\mathscr{P}$ denote the poset of rational parabolic subgroups of $\mathbf{G}$. For all $\mathbf{P}\in \mathscr{P}$, let $\mathbf{N_P}\leq \mathbf{P}$ denote the unipotent radical of $\mathbf{P}$ and write $\Gamma_{\mathbf{N_P}}:=\Gamma\cap \mathbf{N_P}(\Q)$.

\medskip

Our main theorem is the following.

\begin{xreftheorem}{exit path category of the reductive borel-serre compactification}
The exit path $\infty$-category of the reductive Borel--Serre compactification $\Gamma\backslash \XRBS$ is equivalent to the nerve of its homotopy category. This in turn is equivalent to the category $\mathscr{C}^{RBS}_\Gamma$ with objects the rational parabolic subgroups of $\mathbf{G}$ and hom-sets
\begin{align*}
\mathscr{C}^{RBS}_\Gamma(\mathbf{P},\mathbf{Q})=\{\gamma\in \Gamma\mid \gamma\mathbf{P}\gamma^{-1}\leq \mathbf{Q}\}/\Gamma_{\mathbf{N_P}},\quad\text{for all}\quad\mathbf{P},\mathbf{Q}\in \mathscr{P},
\end{align*}
where $\Gamma_{\mathbf{N_P}}$ acts by right multiplication, and composition is given by multiplication of representatives.
\end{xreftheorem}

The two important things to note here, is that the exit path $\infty$-category is equivalent to a $1$-category, and that the definition of this $1$-category makes no reference to the space $\Gamma\backslash \XRBS$, but is defined purely in terms of the poset of rational parabolic subgroups, their unipotent radicals and the conjugation action of $\Gamma$ on this poset. As an intermediate step towards this identification, we identify the exit path $\infty$-categories of the partial Borel--Serre compactification $\XBS$ of $X$ and the Borel--Serre compactification $\Gamma \backslash \XBS$ of $\Gamma\backslash X$. We also show that the equivalences can be chosen to be compatible with the quotient maps $\XBS\rightarrow \Gamma\backslash \XBS\rightarrow \Gamma\backslash \XRBS$ by choosing compatible basepoints.

\medskip

We have the following corollaries of the main theorem.

\begin{xrefcorollary}{weak homotopy equivalence rbs with crbs}
The reductive Borel--Serre compactification $\Gamma\backslash \XRBS$ is weakly homotopy equivalent to the geometric realisation of $\mathscr{C}^{RBS}_\Gamma$.
\end{xrefcorollary}

This immediately recovers the following result of Ji--Murty--Saper--Scherk (\cite[Corollary 5.3]{JiMurtySaperScherk}).

\begin{xrefcorollary}{fundamental group of rbs}
The fundamental group of the reductive Borel--Serre compactification $\Gamma\backslash \XRBS$ is isomorphic to the group $\Gamma/E_\Gamma$, where $E_\Gamma\triangleleft \Gamma$ is the normal subgroup generated by the subgroups $\Gamma_{\mathbf{N_P}}\leq \Gamma$ as $\mathbf{P}$ runs through all rational parabolic subgroups of $\mathbf{G}$.
\end{xrefcorollary}

For a stratified space $X$ and an associative ring $R$, let $D(\operatorname{Shv}_1(X, R))$ denote the classical derived category of sheaves on $X$ with values in left $R$-modules, and let $\operatorname{LMod}_R^1$ denote the category of left $R$-modules. Let $D_{\operatorname{cbl}}(\operatorname{Shv}_1(X, R))$ denote the full subcategory spanned by the complexes whose homology is constructible, and let $D_{\operatorname{cbl,cpt}}(\operatorname{Shv}_1(X, R))$ denote the full subcategory spanned by the complexes whose homology is constructible \textit{and} whose stalk complexes are perfect chain complexes. As another corollary, we get the following expression of these derived categories of sheaves as derived functor categories. 

\begin{xrefcorollary}{constructible derived category of RBS}
Let $R$ be an associative ring. There is an equivalence of categories
\begin{align*}
D_{\operatorname{cbl}}(\operatorname{Shv}_1(\Gamma\backslash \XRBS,R))\simeq D(\operatorname{Fun}(\mathscr{C}^{RBS}_{\Gamma},\operatorname{LMod}_R^1))
\end{align*}
which restricts to an equivalence
\begin{align*}
D_{\operatorname{cbl,cpt}}(\operatorname{Shv}_1(\Gamma\backslash \XRBS,R))\simeq D_{\operatorname{cpt}}(\operatorname{Fun}(\mathscr{C}^{RBS}_{\Gamma},\operatorname{LMod}_R^1)),
\end{align*}
where $D_{\operatorname{cpt}}(\operatorname{Fun}(\mathscr{C}^{RBS}_{\Gamma},\operatorname{LMod}_R^1))\subset D(\operatorname{Fun}(\mathscr{C}^{RBS}_{\Gamma},\operatorname{LMod}_R^1))$ is the full subcategory spanned by the complexes of functors $F_\bullet$ such that $F_\bullet(x)$ is a perfect complex for all $x\in X$.
\end{xrefcorollary}

This can be interpreted as a combinatorial incarnation of constructible complexes of sheaves on the reductive Borel--Serre compactification. In fact, we get an $\infty$-categorical result (see \Cref{constructible derived category of RBS infinity}), but we state the $1$-categorical consequence here as this speaks of the more classical constructible derived category which has been studied in for example \cite{Saper05a,Saper05b} and \cite{GoreskyHarderMacPherson}.

\medskip

The definition of the category $\mathscr{C}^{RBS}_\Gamma$ generalises to a purely algebraic setting of a group acting on a poset. We make this precise in \Cref{Groups acting on posets} and note that this generalisation recovers some well-known categories appearing in the literature: in the setting of finite groups with a split BN-pair of characteristic $p$, we recover (the opposite of) the orbit category on the collection of $p$-radical subgroups, an object that has been studied extensively in finite group theory (\cite{Alperin, AlperinFong, Bouc, JackowskiMcClureOliverI, JackowskiMcClureOliverII, Grodal02, Grodal18}, see also \Cref{orbit categories and p-radical subgroups}); for the mapping class group acting on (the opposite of) the augmented curve complex, we recover (the opposite of) the Charney--Lee category introduced in \cite{CharneyLee84} and appearing also in \cite{EbertGiansiracusa} and \cite{ChenLooijenga} where it is shown that the homotopy type of the Deligne--Mumford compactification is given by this category. We believe that the Charney--Lee category is in fact (the opposite of) the exit path category of the Deligne-Mumford compactification which would strengthen the results of \cite{CharneyLee84,EbertGiansiracusa,ChenLooijenga}. These types of categories also appear in joint work with Dustin Clausen, where they are introduced as models for unstable algebraic K-theory (see further comments below).

\medskip

\textbf{Calculational tools.}
In order to identify the exit path $\infty$-category of the reductive Borel--Serre compactification, we develop some calculation tools. Following ideas of Woolf (\cite{Woolf}), we identify the mapping spaces in the exit path $\infty$-category as the fibres of certain fibrations, namely the end point evaluation fibrations out of the so-called \textit{homotopy link} from the theory of homotopically stratified sets developed by Quinn (\cite{Quinn88}). With a little extra data, the resulting long exact sequences of homotopy groups enable us to identify the homotopy types of the mapping spaces. In particular, we can use this to determine whether the mapping spaces have contractible components, implying that the exit path $\infty$-category is equivalent to the nerve of its homotopy category.

\medskip

We go on to study group actions on stratified spaces and to identify the exit path $\infty$-categories of the resulting quotient stratified spaces in particularly nice cases. This is done in Theorems \ref{exit path categories and group actions} and \ref{exit path categories and group actions II} and we believe that these results should be applicable to a larger class of interesting stratified spaces. They should moreover be compared with a result of Chen--Looijenga (\cite[Theorem 1.7]{ChenLooijenga}): we rephrase and slightly strengthen their result in certain situations, and the conditions that need to be satisfied for our result to apply are a local version of their conditions (see also \Cref{compare Chen-Looijenga}). It should be stressed, however, that the settings differ a great deal and that, apart from allowing local conditions and local data, ours is the more restrictive setting since we do not deal with the cases where the quotient stratified space is an orbispace.

\medskip

\textbf{Further work.}
In joint work with Dustin Clausen, we investigate how generalisations of the categories $\mathscr{C}^{RBS}_\Gamma$ model unstable algebraic K-theory (\cite{ClausenOrsnesJansen}). For an associative ring $A$ and any finitely generated projective $A$-module $M$, we introduce a category $\operatorname{RBS}(M)$ which is defined purely in terms of linear algebra internal to $M$ and which naturally generalises $\mathscr{C}^{RBS}_\Gamma$. We show that if $A$ is a ring with many units, e.g. a local ring with infinite residue field, and every split submodule of M is free, then the geometric realisation $|\operatorname{RBS}(M)|$ recovers the plus-construction of $B\!\operatorname{GL}(M)$. In the case of finite fields, the model we introduce is in a certain sense better than the one given by the plus-construction as it gets rid of the complicated unstable $\F_p$-homology of $\operatorname{GL}_n(k)$ for $k$ a finite field of characteristic $p$. We also show that these categories in a natural way stabilise to provide a model for the (stable) algebraic K-theory space $K(A)$.

\medskip

\textbf{Acknowledgements.}
I would like to thank Dustin Clausen for proposing the question that has resulted in this paper and for the many fruitful conversations we have had related to this work. This paper is part of my PhD thesis at the University of Copenhagen and I would also like to thank my advisor Søren Galatius for his encouragement and helpful discussions throughout my PhD studies.

\medskip

\textbf{Notation and conventions.}
By \textit{$\infty$-category} we mean quasicategory, that is, a simplicial set satisfying the extension property for all inner horn inclusions. We refer to \cite{LurieHTT} for details.

\medskip

By the \textit{homotopy category} of an $\infty$-category $\mathscr{C}$, we mean the $1$-category $h\mathscr{C}$ with objects the $0$-simplices of $\mathscr{C}$ and morphisms the $1$-simplices subject to certain relations given by the $2$-simplices (see \cite[\S 1.2.3]{LurieHTT}).

\medskip

Given a $(1)$-category $\mathscr{C}$, we define its \textit{geometric realisation} $|\mathscr{C}|$ as the geometric realisation of its nerve $N_\bullet\mathscr{C}$. We reserve the term \textit{classifying space} for groups and monoids, i.e.~the classifying space $BM$ of a group $M$ is the geometric realisation of the one object category with morphisms the elements of $M$.

\medskip

For a group $G$, an element $g\in G$ and a subgroup $H\leq G$, we write ${}^gH=gHg^{-1}$ and $H^g=g^{-1}Hg$ for the conjugated subgroups, and similarly for algebraic groups and subgroups.

\medskip

For a set $A$ and a subset $B\subset A$, we denote the set difference by $A-B$.

\medskip

We write $[n]=\{0<1<\cdots<n\}$ for the linearly ordered poset with $n+1$ elements.

\section{Stratified homotopy theory}\label{exitcat}

We recall the definitions of conically stratified (poset-stratified) spaces and exit path $\infty$-categories following Lurie (\cite[Appendix A]{LurieHA}). We also recall the homotopy link from the theory of homotopically stratified sets introduced by Quinn (\cite{Quinn88}) and two important properties of this space, as these will be essential for developing our calculational tools. We state the main property of the exit path $\infty$-category, namely the fact that constructible sheaves are equivalent to representations of it, and we show that this result can be extended to the constructible derived category.

\subsection{Poset-stratified spaces} In the following, all posets are topologised by the Alexandroff topology, i.e.~the open sets are the upwards closed sets. For a given poset $I$ and a fixed $i\in I$, we will write $I_{>i}=\{j\in I\mid j>i\}$, and similarly $I_{\geq i}$, $I_{<i}$, $I_{\leq i}$.

\begin{definition}
A \textit{poset-stratified space} (or simply \textit{stratified space}) is a continuous map $s\colon X\rightarrow I$, where $X$ is a topological space and $I$ a poset. The poset $I$ is called the \textit{poset of strata} and the subspace $X_i=s^{-1}(i)$ is called the $i$'th \textit{stratum}. A \textit{stratum preserving} (or \textit{stratified}) map from $s\colon X\rightarrow I$ to $r\colon Y\rightarrow J$ is a pair of continuous maps $(f\colon X\rightarrow Y,\theta\colon I\rightarrow J)$ such that $r\circ f= \theta\circ s$.
\defend
\end{definition}

\begin{remark}
When no confusion can occur, we omit the poset of strata and refer to a stratified space $s\colon X\rightarrow I$ simply by $X$. If we want to stress that $X$ is stratified over the poset $I$, then we say that $X$ is an $I$-stratified space. Similarly, when considering a stratum preserving map $(f,\theta)$, we may omit the order-preserving map $\theta$.

\medskip

The strata $X_i$, $i\in I$, define a partition of $X$, and continuity of $s$ is equivalent to requiring the upward unions of strata to be open in $X$, i.e.~$\bigcup_{j\geq i}X_j\subset X$ is open for all $i\in I$. The closure relations in the partition translate to poset relations in $I$: if $X_i\subseteq \overline{X_j}$, then $i\leq j$. It will often be the case in naturally occurring examples that $X_i\subseteq \overline{X_j}$ if and only if $i\leq j$.
\exend
\end{remark}

\begin{definition}\label{stratified cone}
Let $s\colon Y\rightarrow I$ be a stratified space. The \textit{(open) cone} on $Y$ is the stratified space $s^{\triangleleft}\colon C(Y)\rightarrow I^{\triangleleft}$ defined as follows: the poset of strata is $I^{\triangleleft}:=I\cup \{-\infty\}$ with $-\infty\leq i$ for all $i\in I$; as a set $C(Y)=(Y\times (0,1))\coprod *$, and $U\subseteq C(Y)$ is open, if and only if:
\begin{enumerate}[label=(\roman*)]
\item $U\cap (Y\times (0,1))$ is open;
\item $\ast\in U$ implies $Y\times (0,\epsilon)\subseteq U$ for some $\epsilon>0$.
\end{enumerate}
The stratification map $s^{\triangleleft}$ is given by $s^\triangleleft(x,t)=s(x)$ and $s^\triangleleft(\ast)=-\infty$.
\defend
\end{definition}

\begin{remark}
The topology of $C(Y)$ above coincides with the teardrop topology on the open cone of $Y$ (see for example \cite{HughesTaylorWeinbergerWilliams}). If $Y$ is compact Hausdorff, then $C(Y)$ is homeomorphic to the pushout $(Y\times [0,1))\coprod_{Y\times \{0\}}\ast$.

\medskip

Note that if $Y$ is metrisable, then by \cite[Lemma 3.15]{HughesTaylorWeinbergerWilliams}, so is the stratified cone $C(Y)$.
\exend
\end{remark}

\begin{definition}
An $I$-stratified space $X$ is \textit{conically stratified at} $x\in X_i\subseteq X$, if there exists:
\begin{enumerate}[label=(\roman*)]
\item a topological space $V$,
\item an $I_{>i}$-stratified space $L$,
\item and a stratified homeomorphism $(\phi,\theta)\colon V\times C(L)\xrightarrow{\ \cong \ } U$ onto a neighbourhood $U$ of $x$ in $X$, where $\theta\colon (I_{>i})^{\triangleleft}\rightarrow I$ is the canonical identification of $(I_{>i})^{\triangleleft}$ with $I_{\geq i}\subseteq I$.
\end{enumerate}
We say that $X$ is \textit{metrisably conically stratified} at $x$, if there exists a conical neighbourhood as above such that the union $V\cup X_j$ is metrisable for all $j>i$.

\medskip

A stratified space $X$ is \textit{conically stratified} if it is conically stratified at all points, and it is \textit{metrisably conically stratified} if it is metrisably conically stratified at all points.
\defend
\end{definition}

\begin{remark}
We will often write that a point $x\in X_i$ admits a conical neighbourhood $\phi\colon V\times C(L)\xrightarrow{\cong} U$, in which case we implicitly assume that $L$ is an $I_{>i}$-stratified space, $V=U\cap X_i$ and $\phi$ is a stratified homeomorphism identifying $I_{>i}^{\triangleleft}$ with $I_{\geq i}$.

\medskip

We call $L$ a \textit{link space} of $x$ in $X$. We cannot in general speak of \textit{the} link space of $x$, although in many cases it will be well-defined up to some sort of equivalence. If a stratified space $X$ is equipped with link bundles (in the sense of \cite{GoreskyHarderMacPherson}), then $X$ is conically stratified and link spaces of any two points $x,x'\in X_i$ are stratified homeomorphic. If $X$ is homotopically stratified (in the sense of \cite{Quinn88}), then $X$ is conically stratified and link spaces of any two points $x,x'\in X_i$ are homotopy equivalent.
\exend
\end{remark}

\begin{example}
\label{manifolds with corners}
Let $M$ be a smooth manifold with corners of dimension $n$, i.e.~a space modelled smoothly upon open subsets of a quadrant in $\R^n$. A point $x\in M$ has \textit{index} $j$, if there is a chart $(U,\phi)$ on $M$, such that $\phi(x)$ has exactly $j$ coordinates equal to zero. Let $M_j\subseteq M$ denote the subspace consisting of points of index $j$; it is a smooth manifold of dimension $n-j$. The standard stratification of $M$ as a manifold with corners is by the path components of the $M_j$, $j=0,\ldots, n$. Let $N\subseteq M_j\subseteq M$ be a stratum, that is a path component, and let $x\in N$. $N$ is of codimension $j$ in $M$, and there is a conical neighbourhood $x\in U\cong V\times C(\Delta^{j-1})$, where $\Delta^{j-1}$ is the standard $(j-1)$-simplex stratified as a manifold with corners.

\medskip

Manifolds with corners have more rigourous structure, namely mapping cylinder neighbourhoods of each stratum, not just conical neighbourhoods of points --- this is the case for many naturally occurring stratified spaces, but we will only need to local data, so we refrain from going into this.
\exend
\end{example}

\begin{definition}\label{standard stratified n-simplex}
The \textit{standard stratified $n$-simplex} is the standard $n$-simplex
\begin{align*}
\Delta^n=\{(t_0,\ldots,t_n)\in [0,1]^n\mid \textstyle\sum t_i=1\}
\end{align*}
stratified by the map $s_n\colon \Delta^n\rightarrow [n]$ defined by
\begin{align*}
s_n(t_0,\ldots,t_k,0,\ldots,0)=k \quad \text{if } t_k\neq 0.
\end{align*}
In other words, $s_n$ maps the subspace $\Delta^{0,\ldots,k}-\Delta^{0,\ldots,k-1}\subset \Delta^n$ to $k$, where $\Delta^{0,\ldots,j}$ denotes the face spanned by the vertices $0,1,\ldots,j$.
\defend
\end{definition}

\begin{remark}
Note that the standard stratified simplex is not stratified as a manifold with corners, but rather in a way that retains the combinatorial information. It can be identified with the $(n+1)$-fold stratified closed mapping cone of a point, where the closed mapping cone is the stratified space obtained by replacing $(0,1)$ by $(0,1]$ in \Cref{stratified cone}.
\exend
\end{remark}

\subsection{Homotopy links}

It will be convenient for us to use a homotopical version of link spaces, namely the homotopy link defined by Quinn in his study of homotopically stratified sets (\cite{Quinn88}).

\begin{definition}
Let $X$ be a topological space and $Y\subseteq X$ a subspace. The \textit{homotopy link} of the pair $(X,Y)$ is the subspace
\begin{align*}
H(X,Y)=\{\gamma\colon [0,1]\rightarrow X\mid \gamma(0)\in Y,\ \gamma((0,1])\subseteq X-Y\}\subseteq C([0,1],X)
\end{align*}
of the path space of $X$ equipped with the compact-open topology.
\defend
\end{definition}

Let $X$ be a topological space and $Y\subseteq X$ a closed subspace. If we stratify $X$ over $\{0<1\}$ by sending $Y$ to $0$ and $X-Y$ to $1$, then the points of $H(X,Y)$ can be identified with the stratum preserving maps $\sigma \colon \Delta^1\rightarrow X$ starting in $Y$ and ending in $X-Y$.

\medskip

We need two fundamental facts about the homotopy link which hold when the pair $(X,Y)$ is sufficiently nice:
\begin{enumerate}[label=(\roman*)]
\item the end point evaluation map
\begin{align*}
e\colon H(X,Y)\rightarrow Y\times (X-Y),\quad \gamma\mapsto (\gamma(0), \gamma(1)),
\end{align*}
is a fibration (see \Cref{e fibration top pair} and \Cref{e fibration conically stratified space});
\item the homotopy link serves as a homotopical replacement for link spaces or more generally for neighbourhoods admitting a nearly strict deformation retraction (see \Cref{holink homotopy equivalent to neighbourhood} and \Cref{link eq to htpy link}).
\end{enumerate}
These results are well-known, but we have been unable to locate a source which does not work in a much more general or slightly different setting so for the sake of self-containment, we have chosen to include the proofs in this fairly elementary point-set topological setting (see \Cref{appendixA}). These proofs also explain our need to work with \textit{metrisably} conically stratified spaces.

\medskip

The following is a direct consequence of \Cref{e fibration top pair}, since the evaluation at zero map $e_0\colon H(Y\times C(Z),Y\times \{*\})\rightarrow Y$ is a fibration for any topological spaces $Y$ and $Z$.

\begin{proposition}\label{e fibration conically stratified space}
Let $X$ be an $I$-stratified space. Let $i\in I$, $x\in X_i$ and $j>i$. Suppose $x$ has a conical neighbourhood $U$ and set $V=U\cap X_i$. If the union $V\cup X_j$ is metrisable, then the end point evaluation map $e\colon H(X_j\cup V,V)\rightarrow V\times X_j$ is a fibration.
\end{proposition}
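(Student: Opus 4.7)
The plan is to obtain the statement as a direct application of \Cref{e fibration top pair} (established in Appendix A) to the pair $(X_j \cup V, V)$. That proposition provides a criterion under which the endpoint evaluation on a homotopy link is a fibration: the subspace should admit a cone-like neighbourhood in the total space, and the total space should be metrisable. The pair $(X_j \cup V, V)$ is metrisable by hypothesis, so the task reduces to exhibiting the required cone structure near $V$.

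For this, I would exploit the conical neighbourhood $\phi\colon V \times C(L) \xrightarrow{\cong} U$ of $x$. Letting $L_j \subseteq L$ denote the $j$-stratum, $\phi$ restricts on the strata indexed by $i$ and $j$ to a stratified homeomorphism
\[
V \times C(L_j) \xrightarrow{\ \cong\ } U \cap (V \cup X_j),
\]
carrying $V \times \{*\}$ onto $V$. Combined with the fact quoted immediately before the proposition---that the initial-point evaluation $e_0\colon H(V \times C(L_j), V \times \{*\}) \to V$ is a fibration for any topological spaces $V$ and $L_j$---this provides exactly the local data required to invoke \Cref{e fibration top pair}.

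The main subtlety is that the conical model only covers the neighbourhood $U$ of $V$ in $V \cup X_j$, whereas a path in $H(X_j \cup V, V)$ may well leave $U$ after its initial segment. This is precisely the point handled internally by \Cref{e fibration top pair}: the fibration property of the endpoint evaluation is controlled by the behaviour of paths near the starting subspace, so a conical neighbourhood of $V$ together with metrisability of $V \cup X_j$ is enough to conclude that $e\colon H(X_j\cup V, V)\to V\times X_j$ is a fibration globally.
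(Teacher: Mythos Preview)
Your proposal is correct and matches the paper's own justification essentially verbatim: the paper notes that the result is a direct consequence of \Cref{e fibration top pair}, using the conical neighbourhood to identify $U\cap(V\cup X_j)$ with $V\times C(L_j)$ and invoking the stated fact that $e_0\colon H(V\times C(L_j),V\times\{*\})\to V$ is a fibration. Your only slight imprecision is in paraphrasing the hypothesis of \Cref{e fibration top pair} as requiring a ``cone-like neighbourhood''---the actual hypothesis is just that $e_0$ on the homotopy link of some neighbourhood is a fibration---but you then apply it correctly.
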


Let $X$ be an $I$-stratified space and suppose $x\in X_i$ has a conical neighbourhood
\begin{align*}
\phi\colon V\times C(L)\xrightarrow{\cong} U.
\end{align*}
Then the map
\begin{align*}
U\times [0,1]\rightarrow U, \quad (\phi(v,[l,s]),t)\mapsto \phi(v,[l,st])
\end{align*}
is a (stratum preserving) nearly strict deformation retraction into $V=U\cap X_i$ (\Cref{nearly strict deformation retraction}). For all $i<j$ and any fixed $\epsilon\in (0,1)$, we have a map 
\begin{align*}
\Psi_{\phi,\epsilon}^j\colon V\times L_j &\rightarrow H(X_j\cup V,V),\quad (v,l)\mapsto \gamma_{v,l,\epsilon}, \\
&\text{where}\quad \gamma_{v,l,\epsilon}\colon [0,1]\rightarrow X_j\cup V,\quad t\mapsto \phi(v,[l,t\epsilon]).
\end{align*}
That is, $\gamma_{v,l,\epsilon}$ is the path tracing the cone coordinate from the apex to $\epsilon$ for fixed $v\in V$ and $l\in L$ in the other coordinates. The following is a direct application of \Cref{holink homotopy equivalent to neighbourhood}, since the map $\Psi_{\phi,\epsilon}^j$ is the composition of the following three maps:
\begin{enumerate}[label=(\roman*)]
\item the inclusion at $\epsilon$, $V\times L_j\rightarrow V\times L_j\times (0,1)$, $(v,l)\mapsto (v,l,\epsilon)$;
\item the homeomorphism $V\times L_j\times (0,1)\xrightarrow{\ \cong\ } U\cap X_j$ given by $\phi$;
\item and the map $U\cap X_j\rightarrow H(X_j\cup V,V)$ of \Cref{holink homotopy equivalent to neighbourhood}.
\end{enumerate}

\begin{proposition}\label{link eq to htpy link}
Let $X$ be an $I$-stratified space. Let $i\in I$, $x\in X_i$ and $j>i$. Suppose $x$ has a conical neighbourhood $\phi\colon V\times C(L)\xrightarrow{\cong} U$.  If the union $V\cup X_j$ is metrisable, then for any choice of $\epsilon\in (0,1)$, the map $\Psi_{\phi,\epsilon}^j\colon V\times L_j \rightarrow H(X_j\cup V,V)$ defined above is a homotopy equivalence. In particular, if $V$ is weakly contractible, then the map $L_j\rightarrow H(X_j\cup V,V)$, $l\mapsto \gamma_{x,l,\epsilon}$, is a weak homotopy equivalence.
\end{proposition}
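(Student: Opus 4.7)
The plan is to prove this exactly via the three-fold decomposition of $\Psi_{\phi,\epsilon}^j$ laid out in the discussion immediately preceding the statement: I would factor
\begin{align*}
V \times L_j \xrightarrow{\ (i)\ } V \times L_j \times (0,1) \xrightarrow{\ (ii)\ } U \cap X_j \xrightarrow{\ (iii)\ } H(X_j \cup V, V),
\end{align*}
and verify that each factor is a homotopy equivalence. The first is a cross-section of a contractible factor, the second is a restriction of the stratified homeomorphism $\phi$, and the third is where all the work lies --- it is the content of \Cref{holink homotopy equivalent to neighbourhood}.

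More concretely: for $(i)$, the inclusion $V \times L_j \hookrightarrow V \times L_j \times (0,1)$ at height $\epsilon$ is a homotopy equivalence with homotopy inverse the projection, the required homotopy being the straight-line homotopy in the contractible interval $(0,1)$. For $(ii)$, $\phi$ is a stratified homeomorphism and the $j$-th stratum of $V \times C(L)$ is precisely $V \times L_j \times (0,1)$, mapping homeomorphically onto $U \cap X_j$. For $(iii)$, the conical neighbourhood structure yields a nearly strict deformation retraction of $U$ onto $V$ (shrink the cone coordinate to the apex, as spelled out explicitly in the text), and this restricts to a nearly strict deformation retraction of $(X_j \cup V) \cap U$ onto $V$. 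Together with the metrisability hypothesis on $V \cup X_j$, this places us exactly in the setting of \Cref{holink homotopy equivalent to neighbourhood}, which gives that $(iii)$ is a homotopy equivalence. Composing the three factors, $\Psi_{\phi,\epsilon}^j$ is a homotopy equivalence.

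For the ``in particular'' statement, assume $V$ is weakly contractible. Then for any basepoints $\pi_n(V \times L_j) \cong \pi_n(V) \times \pi_n(L_j) = \pi_n(L_j)$, so the projection $p \colon V \times L_j \to L_j$ is a weak homotopy equivalence. The map $\iota \colon L_j \to V \times L_j$, $l \mapsto (x,l)$, is a section of $p$, and hence is itself a weak homotopy equivalence by two-out-of-three. Composing $\iota$ with $\Psi_{\phi,\epsilon}^j$ yields exactly $l \mapsto \gamma_{x,l,\epsilon}$, which is therefore a weak homotopy equivalence. The only genuine obstacle is \Cref{holink homotopy equivalent to neighbourhood} itself; granting that result, the present proposition is a formal consequence of the decomposition, which is exactly why the metrisability condition must appear in the hypotheses here.
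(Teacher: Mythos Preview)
Your proposal is correct and follows exactly the approach the paper takes: the three-fold decomposition into the inclusion at height $\epsilon$, the restriction of $\phi$, and the map of \Cref{holink homotopy equivalent to neighbourhood} is precisely what the paper records in the paragraph immediately preceding the proposition, and your treatment of the ``in particular'' clause via two-out-of-three is the obvious way to extract it.
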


\subsection{Exit path \texorpdfstring{$\infty$}{infinity}-categories and constructible sheaves}

Following the work of Lurie, we recall the definitions of the exit path $\infty$-category and the classification of constructible sheaves as representations of the exit path $\infty$-category (\cite[Appendix A]{LurieHA}).

\medskip

In the following $s_n\colon \Delta^n\rightarrow [n]$ will denote the standard stratified $n$-simplex as defined in \Cref{standard stratified n-simplex}, and $\operatorname{Sing}(X)$ denotes the singular set of a topological space.

\begin{definition}
Let $s\colon X\rightarrow I$ be a conically stratified space. The \textit{exit path $\infty$-category} of $X\rightarrow I$ is the subsimplicial set $\exit_\infty(X,I)\subset \operatorname{Sing}(X)$ whose $n$-simplices are the maps $\sigma\colon \Delta^n\rightarrow X$ for which there is an order preserving map $\theta\colon [n]\rightarrow I$ such that $s\circ \sigma=\theta\circ s_n$.
\end{definition}

\begin{remark}
If $\theta$ and $\theta'$ satisfy $\theta\circ s_n=\theta'\circ s_n$, then $\theta=\theta'$, so we can also define the exit path $\infty$-category as the simplicial set with $n$-simplices the stratum preserving maps $\sigma\colon \Delta^n\rightarrow X$.

\medskip

The stratified spaces considered in this paper come equipped with natural stratifications, so from now on, we write $\exit_\infty(X)=\exit_\infty(X,I)$, letting the poset $I$ be implicit in the notation. 
\exend
\end{remark}

The following theorem justifies the name and notation.

\begin{theorem}[{\cite[Theorem A.6.4]{LurieHA}}]
For a conically stratified space $X$, the simplicial set $\exit_\infty(X)$ is an $\infty$-category.
\end{theorem}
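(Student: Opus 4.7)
The plan is to verify the inner horn filling condition directly. Fix $0<k<n$ and a map $\sigma\colon \Lambda^n_k\to \exit_\infty(X)$; unpacking the definition, $\sigma$ amounts to a continuous map $f\colon |\Lambda^n_k|\to X$ together with an order-preserving map $\theta\colon [n]\to I$ satisfying $s\circ f=\theta\circ s_n|_{|\Lambda^n_k|}$. (Order-preservation of $\theta$ is automatic in the inner case, since the faces $\Delta^{0,\ldots,k}$ and $\Delta^{k,\ldots,n}$ both sit inside $|\Lambda^n_k|$.) A filler is a continuous, stratum preserving extension $F\colon \Delta^n\to X$.

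A useful observation in the inner case is that $|\Lambda^n_k|$ also contains the entire $n$-th face $\Delta^{0,\ldots,n-1}$ of $\Delta^n$, which is precisely the closed locus $\{s_n\leq n-1\}=\Delta^n\setminus s_n^{-1}(n)$. Hence $f$ already populates every stratum $s_n^{-1}(j)$ for $j<n$, and stratum preservation of $F$ reduces to the single requirement that the extension send the complementary open set $s_n^{-1}(n)$ into $X_{\theta(n)}$.

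I would first choose any continuous extension $\tilde F\colon \Delta^n\to X$ of $f$, which exists because $|\Lambda^n_k|$ is a retract of $|\Delta^n|$, and then modify $\tilde F$ rel $|\Lambda^n_k|$ so that the restriction to $s_n^{-1}(n)$ takes values in $X_{\theta(n)}$. The modification is local and exploits the conical structure: cover the compact image of $\tilde F$ by finitely many conical neighborhoods $\phi_\alpha\colon V_\alpha\times C(L_\alpha)\xrightarrow{\cong}U_\alpha$ and, in each cone model, push the offending portions of $\tilde F$ along cone rays into the designated higher stratum. Assemble via a partition of unity on $\Delta^n$, iterating along the poset $I$ from deepest strata upwards.

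The main obstacle is the gluing step: the local cone-ray corrections must patch into a globally continuous, stratum preserving map that agrees with $f$ on $|\Lambda^n_k|$. This requires careful ordering of the corrections (deepest stratum first), compatibility between the partition of unity and the stratification $s_n$, and verification that the boundary values inherited from $f$ are preserved in the limit. The detailed execution of this scheme is carried out in Lurie's proof of \cite[Theorem A.6.4]{LurieHA}.
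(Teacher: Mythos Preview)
The paper does not give its own proof of this theorem: it is stated with a direct citation to \cite[Theorem A.6.4]{LurieHA} and no further argument. So there is nothing in the paper to compare your proposal against; both you and the paper ultimately defer to Lurie.

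Your sketch is a reasonable outline of the strategy, and your reduction is correct: for $0<k<n$ the face $\Delta^{0,\ldots,n-1}=\{s_n\le n-1\}$ lies in $|\Lambda^n_k|$, so stratum preservation of an extension reduces to landing $s_n^{-1}(n)$ in $X_{\theta(n)}$. That said, the ``take an arbitrary continuous extension via a retraction and then push along cone rays with a partition of unity'' paragraph hides the genuine content. Lurie's argument does not proceed quite this way: he works by induction, exploiting that near a point of the deepest stratum the cone structure lets one reduce to a horn-filling problem one dimension down in the link, and he handles the case $0<k\le n$ uniformly (not only strict inner horns). The difficulty you flag---patching local cone-ray corrections globally---is real, and your sketch does not indicate how to overcome it; in particular, a na\"{\i}ve partition-of-unity combination of cone-ray pushes need not remain in the correct stratum. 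Since you already cite Lurie for the details, this is fine as an expository gloss, but it should not be mistaken for a self-contained proof.
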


Thus we have a functor $\exit_\infty\colon \operatorname{Strat} \rightarrow \operatorname{Cat}_\infty$ (of $1$-categories).

\begin{definition}
We define the \textit{exit path $1$-category} of a conically stratified space $X$ as the homotopy category of the exit path $\infty$-category of $X$ and we denote it by $\exit_1(X)$.
\defend
\end{definition}

\begin{remark}
Note that we are not taking the enriched homotopy category, but just the underlying $1$-category; we deal with the higher homotopy in the mapping spaces of $\exit_\infty(X)$ separately.
\exend
\end{remark}

The following remark should provide some intuition for the exit path $\infty$-category.

\begin{remark}
Let $X$ be a conically $I$-stratified space.
\begin{itemize}[label=$\ast$]
\item The $0$-simplices of $\exit_\infty(X)$ are the points of $X$.
\item Identifying $\Delta^1\cong [0,1]$, the $1$-simplices of $\exit_\infty(X)$ are the paths $\sigma\colon [0,1]\rightarrow X$ which satisfy $\sigma(0)\in X_i$ and $\sigma((0,1])\subseteq X_j$ for some $i\leq j$ in $I$. In other words, the exit paths either stay within one stratum or leave the deeper stratum instantaneously entering the stratum containing the end point. We see that the homotopy link $H(X_i\cup X_j,X_i)$ of $X_i$ in $X_i\cup X_j$ is a subset of the $1$-simplices of $\exit_\infty(X)$.
\item The morphisms in $\exit_1(X)$ are represented by $1$-simplices of $\exit_\infty(X)$ as described above, but composition is hard to describe concretely. Intuitively, however, we can think of the composite of two such paths in $\exit_1(X)$ as the concatenation.
\item For all $i\in I$, $\exit_\infty(X)$ contains the fundamental $\infty$-groupoid $\operatorname{Sing}(X_i)$ as the full subcategory spanned by the points of $X_i$. \exend
\end{itemize}
\end{remark}

The most important feature of the exit path $\infty$-category is that for sufficiently well-behaved stratified spaces, it classifies constructible sheaves. We state this for sheaves with values in any compactly generated $\infty$-category. Lurie proves it for space-valued sheaves, but the generalisation is well-known and quite elementary to prove. However, since we have been unable to locate a proof in the literature, we have included a detailed proof in the appendix, also in the hope that it makes these results more accessible to a reader without a background in $\infty$-categories. We refer to \Cref{appendix: sheaves} and \cite[Section A.5]{LurieHA} for proofs and details. 

\medskip

For a topological space $X$ and a compactly generated $\infty$-category $\mathscr{C}$, we denote by $\operatorname{Shv}(X,\mathscr{C})$ the $\infty$-category of $\mathscr{C}$-valued sheaves on $X$ (see \Cref{C-valued sheaves}).

\begin{definition}
Let $X$ be an $I$-stratified space and let $\mathscr{C}$ be a compactly generated $\infty$-category. A sheaf $\mathcal{F}\in \operatorname{Shv}(X,\mathscr{C})$ is \textit{constructible} if for every $i\in I$, the restriction $\mathcal{F}\vert_{X_i}$ is a locally constant sheaf in $\operatorname{Shv}(X_i,\mathscr{C})$. We denote by $\operatorname{Shv}_{\operatorname{cbl}}(X,\mathscr{C})$ the full subcategory spanned by the constructible sheaves.
\defend
\end{definition}

We will need impose some condition on the stratifying poset in order to classify constructible sheaves in terms of the exit path $\infty$-category:

\begin{definition}
A poset $I$ is said to satisfy the \textit{ascending chain condition} if every non-empty subset of $I$ has a maximal element.
\defend
\end{definition}

The following theorem generalises the monodromy equivalence which classifies locally constant sheaves as representations of the fundamental $\infty$-groupoid.

\begin{theorem}[{\cite[Theorem A.9.3]{LurieHA} and \Cref{exit paths and constructible sheaves equivalence arbitrary coefficients}}]\label{exit paths classify constructible sheaves}
Let $\mathscr{C}$ be a compactly generated $\infty$-category. Suppose $X$ is a conically $I$-stratified space which is paracompact and locally contractible, and that $I$ satisfies the ascending chain condition. Then there is an equivalence of $\infty$-categories
\begin{align*}
\Psi_X\colon \operatorname{Fun}(\exit_\infty(X), \mathscr{C})\rightarrow \operatorname{Shv}_{\operatorname{cbl}}(X,\mathscr{C}).
\end{align*}
\end{theorem}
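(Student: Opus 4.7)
The strategy is to bootstrap from Lurie's space-valued result (\cite[Theorem A.9.3]{LurieHA}) by exploiting the universal property of presentable $\infty$-categories as modules over $\mathcal{S}$ in $\mathrm{Pr}^L$. The idea is that both sides of the claimed equivalence depend on $\mathscr{C}$ through a tensor product with a fixed presentable $\infty$-category built from $X$, so once the equivalence is known for $\mathscr{C}=\mathcal{S}$, it extends formally to any compactly generated (in particular presentable) $\mathscr{C}$.

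First, I would recall two formal compatibilities. For any presentable $\infty$-category $\mathscr{C}$ and any simplicial set $K$, there is a natural equivalence
\begin{align*}
\mathrm{Fun}(K,\mathscr{C}) \;\simeq\; \mathrm{Fun}(K,\mathcal{S}) \otimes \mathscr{C}
\end{align*}
in $\mathrm{Pr}^L$, coming from the fact that $\mathrm{Fun}(K,\mathcal{S})$ is the free presentable $\infty$-category on $K$. Similarly, for any topological space $Y$, the $\infty$-topos-theoretic tensoring gives
\begin{align*}
\mathrm{Shv}(Y,\mathscr{C}) \;\simeq\; \mathrm{Shv}(Y,\mathcal{S}) \otimes \mathscr{C},
\end{align*}
since $\mathrm{Shv}(Y,\mathscr{C})$ is the presentable $\infty$-category of left adjoints into $\mathscr{C}$ from $\mathrm{Shv}(Y,\mathcal{S})^{op}$. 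Combining these with Lurie's theorem for $\mathscr{C}=\mathcal{S}$ and tensoring with $\mathscr{C}$ produces an equivalence
\begin{align*}
\mathrm{Fun}(\exit_\infty(X),\mathscr{C}) \;\simeq\; \mathrm{Shv}_{\mathrm{cbl}}(X,\mathcal{S}) \otimes \mathscr{C},
\end{align*}
and it remains to identify the right hand side with $\mathrm{Shv}_{\mathrm{cbl}}(X,\mathscr{C})$. For this, one checks separately that locally constant sheaves behave well under tensoring: under the paracompactness and local contractibility hypotheses, the monodromy equivalence identifies $\mathrm{Shv}_{\mathrm{lc}}(X_i,\mathscr{C})$ with $\mathrm{Fun}(\mathrm{Sing}(X_i),\mathscr{C})$, so the first formal compatibility above applied to $K=\mathrm{Sing}(X_i)$ yields $\mathrm{Shv}_{\mathrm{lc}}(X_i,\mathscr{C})\simeq \mathrm{Shv}_{\mathrm{lc}}(X_i,\mathcal{S})\otimes \mathscr{C}$. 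A stratum-by-stratum argument, using that restriction of sheaves to a locally closed subspace commutes with the tensor product (both being left adjoints in $\mathrm{Pr}^L$), then promotes this to the constructible statement.

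\textbf{Main obstacle.} The genuinely delicate point is verifying that the constructibility condition is preserved by the tensor product operation, i.e.~that $\mathrm{Shv}_{\mathrm{cbl}}(X,\mathcal{S})\otimes \mathscr{C}\simeq \mathrm{Shv}_{\mathrm{cbl}}(X,\mathscr{C})$. Constructibility is a locality condition involving restriction to every stratum, and while each restriction functor is a left adjoint and therefore individually compatible with tensor products, the \emph{intersection} of the full subcategories cut out by each locality condition need not \emph{a priori} be compatible. Overcoming this requires either an inductive argument up the poset $I$ (using the ascending chain condition to access maximal strata) combined with a recollement gluing the categories of constructible sheaves on the open complement and the closed stratum, or a direct verification that constructibility is an accessibility-type condition stable under tensoring with $\mathscr{C}$. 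Either way, the ACC hypothesis on $I$ enters essentially in the induction, and the compact generation hypothesis on $\mathscr{C}$ is used to guarantee that limits/colimits needed for the inductive gluing interact correctly with the tensor product.
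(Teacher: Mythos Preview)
Your strategy is sound and closely related to the paper's, but the paper takes a more concrete route that dissolves precisely the obstacle you flag. Rather than working with the abstract Lurie tensor product in $\mathrm{Pr}^L$, the paper uses the explicit model
\begin{align*}
\mathscr{D}\otimes\mathscr{C}\;\simeq\;\operatorname{Fun}^{\operatorname{lex}}(\mathscr{C}_0^{\operatorname{op}},\mathscr{D}),
\end{align*}
valid for compactly generated $\mathscr{C}=\operatorname{Ind}(\mathscr{C}_0)$ and presentable $\mathscr{D}$, which comes simply from currying together with $\operatorname{Ind}(\mathscr{C}_0)\simeq\operatorname{Fun}^{\operatorname{lex}}(\mathscr{C}_0^{\operatorname{op}},\mathcal{S})$. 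Applied to $\mathscr{D}=\operatorname{Shv}(X,\mathcal{S})$ and $\mathscr{D}=\operatorname{Fun}(\exit_\infty(X),\mathcal{S})$, this gives the two ``formal compatibilities'' you state, and Lurie's theorem for $\mathcal{S}$ is inserted in the middle exactly as you propose.

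The payoff of the explicit model is that your ``main obstacle'' becomes a one-line observation. In the $\operatorname{Fun}^{\operatorname{lex}}$ description, a $\mathscr{C}$-valued sheaf is a left exact functor $F\colon \mathscr{C}_0^{\operatorname{op}}\to\operatorname{Shv}(X,\mathcal{S})$, and since pullback $i^*$ to a stratum preserves finite limits, restriction of the $\mathscr{C}$-valued sheaf corresponds to postcomposition of $F$ with $i^*$. Hence the $\mathscr{C}$-valued sheaf is constructible iff $F$ lands in $\operatorname{Shv}_{\operatorname{cbl}}(X,\mathcal{S})$; there is no need for induction on the poset, recollement, or any analysis of how intersections of full subcategories interact with $\otimes\,\mathscr{C}$. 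Your proposed inductive/recollement argument would also work, but it is heavier machinery than the situation demands, and the ascending chain condition on $I$ is in fact not used at this step in the paper's argument---it is needed only inside Lurie's original space-valued theorem.
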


\begin{remark}
The result is stated in \cite{LurieHA} for spaces which are locally of singular shape, but we wish to avoid going into the technicalities involved in defining this notion here, so we restrict ourselves to locally contractible spaces.
\exend
\end{remark}

We have the following corollary.

\begin{corollary}[{\cite[Corollary A.9.4]{LurieHA}}]\label{exit and sing weak htpy equiv}
Suppose $X$ is a conically $I$-stratified space which is paracompact and locally contractible and where $I$ satisfies the ascending chain condition. The inclusion $\exit_\infty(X)\hookrightarrow \operatorname{Sing}(X)$ is a weak homotopy equivalence of simplicial sets, i.e.~the induced map of geometric realisations $|\exit_\infty(X)|\rightarrow |\operatorname{Sing}(X)|$ is a homotopy equivalence.
\end{corollary}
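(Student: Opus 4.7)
The plan is to derive this corollary from \Cref{exit paths classify constructible sheaves}. Since $\operatorname{Sing}(X)$ is already a Kan complex, showing that the inclusion $\iota\colon \exit_\infty(X)\hookrightarrow \operatorname{Sing}(X)$ is a weak homotopy equivalence of simplicial sets is equivalent to showing that $\iota$ exhibits $\operatorname{Sing}(X)$ as the groupoid completion (localization at all morphisms) of the $\infty$-category $\exit_\infty(X)$. By the universal property of the localization, this reduces to proving that for every Kan complex $K$, restriction along $\iota$ induces an equivalence
\begin{equation*}
\operatorname{Fun}(\operatorname{Sing}(X), K) \xrightarrow{\ \iota^*\ } \operatorname{Fun}(\exit_\infty(X), K).
\end{equation*}

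To establish this, I would apply \Cref{exit paths classify constructible sheaves} with $\mathscr{C}=\mathscr{S}$ the $\infty$-category of spaces, together with the classical monodromy equivalence identifying $\operatorname{Fun}(\operatorname{Sing}(X),\mathscr{S})$ with the full subcategory $\operatorname{Shv}_{\operatorname{lc}}(X,\mathscr{S})$ of locally constant sheaves inside $\operatorname{Shv}_{\operatorname{cbl}}(X,\mathscr{S})$. Under these two equivalences, $\iota^*$ corresponds to the inclusion of locally constant into constructible sheaves, and restricting to functors landing in $K\subset \mathscr{S}$ reduces the claim to the following: a constructible sheaf $F$ on $X$ whose classifying functor $\exit_\infty(X)\to\mathscr{S}$ factors through the Kan complex $K$ is automatically locally constant. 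Since every morphism in $K$ is an equivalence, the hypothesis on such a sheaf $F$ is precisely that its transport along every exit path is an equivalence of stalks.

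The main technical step is then verifying that the equivalent-transport property does imply local constancy. I would argue locally: at each $x\in X_i$ with conical neighborhood $\phi\colon V\times C(L)\xrightarrow{\cong} U$, \Cref{link eq to htpy link} identifies, up to homotopy, the space of exit paths from $x$ into $X_j\cap U$ with the link piece $L_j$. The hypothesis then forces all stalks of $F$ throughout $U$ to be coherently equivalent via paths coming from $x$; combined with the stratumwise local constancy already built into constructibility, this trivialises $F\vert_U$.

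I expect the principal obstacle to be upgrading these stalk equivalences into an honest trivialisation of $F$ in a neighborhood of $x$, rather than merely a stalkwise one. This is essentially the local descent input to \Cref{exit paths classify constructible sheaves} itself, and paracompactness together with local contractibility of $X$ (and metrisability of the conical neighborhoods needed for \Cref{link eq to htpy link}) should ensure that the gluing goes through, so that the whole argument becomes a localised version of Lurie's proof of \cite[Corollary A.9.4]{LurieHA}.
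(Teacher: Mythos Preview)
The paper does not give its own proof of this corollary; it is simply cited from \cite[Corollary A.9.4]{LurieHA}. Your overall strategy---reducing to the claim that restriction along $\iota$ induces an equivalence $\operatorname{Fun}(\operatorname{Sing}(X),K)\to\operatorname{Fun}(\exit_\infty(X),K)$ for every Kan complex $K$, and then interpreting both sides sheaf-theoretically via \Cref{exit paths classify constructible sheaves} and the monodromy equivalence---is correct and is precisely Lurie's approach.

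Two points are worth tightening. First, the phrase ``restricting to functors landing in $K\subset\mathscr{S}$'' is imprecise: a general Kan complex $K$ is not naturally a full subcategory of $\mathscr{S}$. What you actually need is that a functor from any $\infty$-category into a Kan complex inverts every morphism, so the comparison becomes one between functors $\exit_\infty(X)\to\mathscr{S}$ that invert all morphisms and all functors $\operatorname{Sing}(X)\to\mathscr{S}$; under the sheaf equivalences this is exactly the inclusion of locally constant into constructible sheaves, and you must show it is an equivalence onto the subcategory of constructible sheaves whose exit-path transport maps are invertible. Second, your hands-on sketch of this last step via \Cref{link eq to htpy link} imports a metrisability hypothesis that is not in the statement and is not needed. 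Lurie's argument stays at the level of the sheaf equivalence (or, more concretely, replaces $K$ by $\operatorname{Sing}(Y)$ for a CW complex $Y$ and compares with $\operatorname{Sing}(Y^X)$) rather than re-opening the conical charts, so the local-constancy verification does not require the extra point-set input you anticipate.
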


\subsection{The constructible derived category of sheaves}\label{constructible derived category}

If the exit path $\infty$-category is equivalent to the nerve of its homotopy category, then the classification of constructible sheaves as representations of the exit path $\infty$-category can be extended to give an expression of the constructible derived category of sheaves (of $R$-modules) in terms of the exit path $1$-category. The observations made in this section are quite elementary for anyone with a background in $\infty$-categories. We have chosen to be quite detailed for the sake of other potential readers.

\medskip

For a Grothendieck abelian category $\mathcal{A}$ we denote by $\mathscr{D}(\mathcal{A})$ the (unbounded) derived $\infty$-category of $\mathcal{A}$ (see \cite[\S 1.3]{LurieHA}). The homotopy category of $\mathscr{D}(\mathcal{A})$ is the classical (unbounded) derived ($1$-)category $D(\mathcal{A})$ of $\mathcal{A}$.

\medskip

Let $R$ be an associative ring and let $\operatorname{LMod}_R^1$ denote the $1$-category of left $R$-modules. Viewing $R$ as a discrete $\mathbb{E}_1$-ring, let $\operatorname{LMod}_R$ denote the $\infty$-category of left $R$-module spectra. Then
\begin{align*}
\mathscr{D}(\operatorname{LMod}^1_R)\xrightarrow{\simeq} \operatorname{LMod}_R
\end{align*}
by \cite[Proposition 7.1.1.16]{LurieHA}. In particular, the derived category $D(R):=D(\operatorname{LMod}_R^1)$ is equivalent to the homotopy category of $\operatorname{LMod}_R$. By \cite[Proposition 7.2.4.2]{LurieHA}, the $\infty$-category $\operatorname{LMod}_R$ is compactly generated and the subcategory of compact objects is the $\infty$-category $\operatorname{Perf}_\infty(R)$ of perfect modules (\cite[\S 7.2.4]{LurieHA}). Under the equivalence above, $\mathscr{D}(\operatorname{LMod}^1_R)\xrightarrow{\simeq} \operatorname{LMod}_R$, perfect modules correspond to perfect chain complexes, i.e. complexes which are quasi-isomorphic to bounded chain complexes whose terms are finitely generated projective modules (Corollary 7.2.4.5 and Example 7.2.4.25 of \cite{LurieHA}). Let $\operatorname{Perf}_1(R)\subseteq D(R)$ denote the full subcategory spanned by the perfect chain complexes.

\medskip

Let $\operatorname{Shv}_1(X,R)$ denote the $1$-category of sheaves on $X$ with values in $\operatorname{LMod}_R^1$. This is a Grothendieck abelian category, and we consider the derived $\infty$-category $\mathscr{D}(\operatorname{Shv}_1(X,R))$.

\begin{remark}
The canonical functor
\begin{align*}
\mathscr{D}(\operatorname{Shv}_1(X,R))\rightarrow \operatorname{Shv}(X,\mathscr{D}(R))\simeq\operatorname{Shv}(X,\operatorname{LMod}_R)
\end{align*}
is fully faithful with essential image the full subcategory $\operatorname{Shv}^{\operatorname{hyp}}(X,\operatorname{LMod}_R)$ of hypercomplete sheaves, that is, sheaves which satisfy descent with respect to any hypercovering not just covering sieves (\cite[\S 6.5.2]{LurieHTT}, see also the discussion at \cite{mathoverflow265557}). Constructible sheaves are hypercomplete by \cite[Proposition A.5.9]{LurieHA}, and we note that the subcategory 
\begin{align*}
\operatorname{Shv}_{\operatorname{cbl}}(X,\operatorname{LMod}_R)\subseteq \operatorname{Shv}^{\operatorname{hyp}}(X,\operatorname{LMod}_R)
\end{align*}
corresponds to the full subcategory $\mathscr{D}_{\operatorname{cbl}}(\operatorname{Shv}_1(X,R))\subseteq\mathscr{D}(\operatorname{Shv}_1(X,R))$ spanned by the complexes whose homology sheaves are constructible. Similarly, we see that the subcategory of constructible compact-valued sheaves (i.e.~whose stalk complexes are compact objects in $\operatorname{LMod}_R$, see \Cref{constructible compact-valued sheaf})
\begin{align*}
\operatorname{Shv}_{\operatorname{cbl,cpt}}(X,\operatorname{LMod}_R)\subseteq \operatorname{Shv}_{\operatorname{cbl}}(X,\operatorname{LMod}_R)
\end{align*}
corresponds to the subcategory $\mathscr{D}_{\operatorname{cbl,cpt}}(\operatorname{Shv}_1(X,R))\subseteq\mathscr{D}_{\operatorname{cbl}}(\operatorname{Shv}_1(X,R))$ spanned by the complexes whose homology sheaves are constructible \textit{and} whose stalk complex is a perfect chain complex.\exend
\end{remark}

\begin{definition}
The \textit{constructible derived category of sheaves} on $X$ with values in left $R$-modules is the full subcategory
\begin{align*}
D_{\operatorname{cbl}}(\operatorname{Shv}_1(X,R))\subseteq D(\operatorname{Shv}_1(X,R))
\end{align*}
spanned by the complexes of sheaves with constructible homology sheaves. The \textit{constructible compact-valued derived category of sheaves} on $X$ with values in left $R$-modules is the full subcategory
\begin{align*}
D_{\operatorname{cbl,cpt}}(\operatorname{Shv}_1(X,R))\subseteq D(\operatorname{Shv}_1(X,R))
\end{align*}
spanned by the complexes of sheaves with constructible homology sheaves and whose stalk complex is a perfect chain complex.
\defend
\end{definition}

We give two examples which are of interest in the study of the reductive Borel-Serre compactification, but first we make the following observation.

\begin{remark}
Suppose $R$ is a regular Noetherian ring of finite Krull dimension. Then it has finite global dimension, and thus any bounded below chain complex whose terms are finitely generated is quasi-isomorphic to a bounded complex whose terms are finitely generated projective. Therefore a constructible complex of sheaves in the sense of \cite[\S 1.4]{GoreskyMacPherson83} is a constructible compact-valued sheaf in the sense of \Cref{constructible compact-valued sheaf}.
\exend
\end{remark}

\begin{example}\label{examples of constructible complexes of sheaves}
\ 
\begin{enumerate}[label=(\roman*)]
\item Let $X$ be a topological pseudomanifold with a fixed stratification and let $k$ be a field. Intersection homology of $X$ can be defined as the hypercohomology of a complex of sheaves $\mathbf{IC}_p(X)$ on $X$ taking values in $k$-vector spaces (\cite{GoreskyMacPherson80}, \cite{GoreskyMacPherson83}). The complexes $\mathbf{IC}_p(X)$ are constructible and compact-valued \cite[\S 3]{GoreskyMacPherson83}.
\item The weighted cohomology of the arithmetic group $\Gamma$ is defined as the hypercohomology of a complex of sheaves $\mathbf{W}^p\mathbf{C}^\bullet(\mathbf{E})$ on the reductive Borel--Serre compactification associated with $\Gamma$ taking values in complex vector spaces (\cite{GoreskyHarderMacPherson}). The complexes $\mathbf{W}^p\mathbf{C}^\bullet(\mathbf{E})$ are constructible and compact-valued \cite[Theorem 17.6]{GoreskyHarderMacPherson}.\exend
\end{enumerate}
\end{example}

We have the following theorem.

\begin{theorem}\label{equivalence of derived infinity-categories}
Let $X$ be a paracompact, locally contractible conically $I$-stratified space with $I$ satisfying the ascending chain condition, and let $R$ be an associative ring. Suppose the exit path $\infty$-category $\exit_\infty(X)$ is equivalent to the nerve of its homotopy category $\exit_1(X)$. Then there is an equivalence of $\infty$-categories
\begin{align*}
\operatorname{Shv}_{\operatorname{cbl}}(X,\operatorname{LMod}_R)\simeq \mathscr{D}(\operatorname{Fun}\big(\exit_1(X),\operatorname{LMod}_R^1)),
\end{align*}
which restricts to an equivalence
\begin{align*}
\operatorname{Shv}_{\operatorname{cbl,cpt}}(X,\operatorname{LMod}_R)\simeq \mathscr{D}_{\operatorname{cpt}}(\operatorname{Fun}\big(\exit_1(X),\operatorname{LMod}_R^1)),
\end{align*}
where $\mathscr{D}_{\operatorname{cpt}}(\operatorname{Fun}\big(\exit_1(X),\operatorname{LMod}_R^1))$ is the full subcategory spanned by the complexes of functors $F_\bullet$ such that $F_\bullet(x)$ is a perfect complex for all $x\in X$.
\end{theorem}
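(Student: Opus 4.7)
The plan is to build the claimed equivalence as a composite of three identifications. First, I apply \Cref{exit paths classify constructible sheaves} with the compactly generated $\infty$-category $\mathscr{C} = \operatorname{LMod}_R$ to obtain
\begin{align*}
\operatorname{Shv}_{\operatorname{cbl}}(X, \operatorname{LMod}_R) \simeq \operatorname{Fun}(\exit_\infty(X), \operatorname{LMod}_R).
\end{align*}
Second, the standing hypothesis $\exit_\infty(X) \simeq N(\exit_1(X))$ lets me replace the indexing $\infty$-category by the nerve of a $1$-category. It then remains to produce a natural equivalence
\begin{align*}
\operatorname{Fun}(N(C), \operatorname{LMod}_R) \simeq \mathscr{D}(\operatorname{Fun}(C, \operatorname{LMod}_R^1))
\end{align*}
for any small $1$-category $C$, to be applied with $C = \exit_1(X)$.

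\textbf{The main step.} This last equivalence is the technical heart. I would derive it from the identification $\operatorname{LMod}_R \simeq \mathscr{D}(\operatorname{LMod}_R^1)$ recalled in \Cref{constructible derived category}, together with the observation that $\operatorname{Fun}(C, \operatorname{LMod}_R^1)$ is again Grothendieck abelian under the pointwise structure. Concretely, I would exhibit both sides as the underlying $\infty$-category of a combinatorial model structure on the category of chain complexes $\operatorname{Ch}(\operatorname{Fun}(C, \operatorname{LMod}_R^1)) \cong \operatorname{Fun}(C, \operatorname{Ch}(\operatorname{LMod}_R^1))$: the derived $\infty$-category of a Grothendieck abelian category arises by inverting quasi-isomorphisms on its chain complexes, while the $\infty$-categorical functor category from $N(C)$ into the chain complex presentation of $\operatorname{LMod}_R$ is computed by the corresponding projective (or injective) diagram model structure. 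A perhaps more conceptual alternative is a t-structure recognition argument: the pointwise t-structure on $\operatorname{Fun}(N(C), \operatorname{LMod}_R)$ has heart $\operatorname{Fun}(C, \operatorname{LMod}_R^1)$ precisely because $C$ is a $1$-category, and one then identifies the ambient $\infty$-category with the derived $\infty$-category of its heart.

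\textbf{The compact case and main obstacle.} For the second equivalence, I trace stalks through the chain of identifications: the Lurie equivalence in \Cref{exit paths classify constructible sheaves} sends a constructible sheaf $\mathcal{F}$ to the functor $x \mapsto \mathcal{F}_x$ on $\exit_\infty(X)$, so evaluation at a vertex $x$ recovers the stalk. This compatibility persists after the derived functor rewriting, so the stalk at $x$ of the sheaf associated to $F_\bullet \in \mathscr{D}(\operatorname{Fun}(\exit_1(X), \operatorname{LMod}_R^1))$ is $F_\bullet(x)$ viewed in $\mathscr{D}(\operatorname{LMod}_R^1) \simeq \operatorname{LMod}_R$. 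Since the compact objects of $\operatorname{LMod}_R$ are precisely the perfect chain complexes in $D(R)$, the full subcategory $\operatorname{Shv}_{\operatorname{cbl,cpt}}(X, \operatorname{LMod}_R)$ is matched with the full subcategory of diagrams whose pointwise values are perfect, giving the claimed restricted equivalence. The principal obstacle is the derived functor category identification in the main step: the result is folklore, but making it precise requires either the careful model-categorical argument above or a cleanly stated t-structure recognition, and the hypothesis that $C$ is a $1$-category is used essentially to pin down the heart and avoid higher coherence phenomena that would arise for a general indexing $\infty$-category.
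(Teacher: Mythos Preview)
Your proposal is correct and follows essentially the same approach as the paper. The paper dispatches the step you identify as the main obstacle---the equivalence $\operatorname{Fun}(N(C), \operatorname{LMod}_R) \simeq \mathscr{D}(\operatorname{Fun}(C, \operatorname{LMod}_R^1))$ for a $1$-category $C$---by a direct citation of Propositions~1.3.4.25 and~1.3.5.15 of \cite{LurieHA} rather than sketching a model-categorical or $t$-structure argument, and handles the compact-valued restriction via \Cref{exit paths and constructible sheaves equivalence arbitrary coefficients with compact stalks}, which is precisely the stalk-matching statement you describe.
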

\begin{proof}
Propositions 1.3.4.25 and 1.3.5.15 of \cite{LurieHA} give us the first of the following two equivalences, and the second is the one of \Cref{exit paths classify constructible sheaves}.
\begin{align*}
\mathscr{D}(\operatorname{Fun}(\exit_1(X),\operatorname{LMod}_R^1))\xrightarrow{\simeq} \operatorname{Fun}(\exit_\infty(X),\operatorname{LMod}_R)\xrightarrow{\simeq}  \operatorname{Shv}_{\operatorname{cbl}}(X,\operatorname{LMod}_R).
\end{align*}
The restriction to compact objects is a consequence of \Cref{exit paths and constructible sheaves equivalence arbitrary coefficients with compact stalks}.
\end{proof}

Taking homotopy categories, we get the following corollary.

\begin{corollary}\label{equivalence of derived 1-categories}
In the situation of \Cref{equivalence of derived infinity-categories} there is an equivalence of $1$-categories
\begin{align*}
D_{\operatorname{cbl}}(\operatorname{Shv}_1(X,R))\simeq D(\operatorname{Fun}(\exit_1(X),\operatorname{LMod}_R^1))
\end{align*}
which restricts to an equivalence
\begin{align*}
D_{\operatorname{cbl,cpt}}(\operatorname{Shv}_1(X,R))\simeq D_{\operatorname{cpt}}(\operatorname{Fun}(\exit_1(X),\operatorname{LMod}_R^1))
\end{align*}
where $D_{cpt}(\operatorname{Fun}(\exit_1(X),\operatorname{LMod}_R^1))$ is the full subcategory spanned by the complexes of functors $F_\bullet$ such that $F_\bullet(x)$ is a perfect complex for all $x\in X$.
\end{corollary}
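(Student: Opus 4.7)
The plan is to deduce this $1$-categorical corollary by simply passing to homotopy categories in the $\infty$-categorical equivalence of \Cref{equivalence of derived infinity-categories}. Since the homotopy category functor preserves equivalences of $\infty$-categories and sends full subcategories to full subcategories, the main content is to correctly identify both sides after taking $h(-)$.

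First I would record the two tautological identifications. On the algebraic side, by the definition of the unbounded derived $1$-category of a Grothendieck abelian category as the homotopy category of the derived $\infty$-category, we have $h\mathscr{D}(\operatorname{Fun}(\exit_1(X),\operatorname{LMod}_R^1)) = D(\operatorname{Fun}(\exit_1(X),\operatorname{LMod}_R^1))$. On the geometric side, the remark preceding \Cref{constructible derived category} identifies $\operatorname{Shv}_{\operatorname{cbl}}(X,\operatorname{LMod}_R)$ with $\mathscr{D}_{\operatorname{cbl}}(\operatorname{Shv}_1(X,R))$ via the fully faithful embedding $\mathscr{D}(\operatorname{Shv}_1(X,R))\hookrightarrow \operatorname{Shv}(X,\operatorname{LMod}_R)$ whose image is the hypercomplete sheaves; combined with the definition of $D_{\operatorname{cbl}}(\operatorname{Shv}_1(X,R))$ as the full subcategory of $D(\operatorname{Shv}_1(X,R)) = h\mathscr{D}(\operatorname{Shv}_1(X,R))$ spanned by complexes with constructible homology sheaves, this yields $h\operatorname{Shv}_{\operatorname{cbl}}(X,\operatorname{LMod}_R) = D_{\operatorname{cbl}}(\operatorname{Shv}_1(X,R))$. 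Applying $h(-)$ to the equivalence of \Cref{equivalence of derived infinity-categories} then produces the first displayed equivalence of the corollary.

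For the restricted equivalence, I would argue that the $\infty$-categorical equivalence of \Cref{equivalence of derived infinity-categories} already restricts to an equivalence on the compact-valued subcategories, so it suffices to check that taking homotopy categories sends each side to the $1$-categorical subcategory described in the statement. On the sheaf side this is immediate: a complex of sheaves in $D(\operatorname{Shv}_1(X,R))$ lies in $D_{\operatorname{cbl,cpt}}$ precisely when the corresponding object of $\mathscr{D}_{\operatorname{cbl}}(\operatorname{Shv}_1(X,R))$ has perfect stalk complexes. On the functor side I would use the fact that perfect complexes in $D(R)$ are exactly the images in the homotopy category of compact objects of $\operatorname{LMod}_R$, so that $\mathscr{D}_{\operatorname{cpt}}(\operatorname{Fun}(\exit_1(X),\operatorname{LMod}_R^1))$ has homotopy category $D_{\operatorname{cpt}}(\operatorname{Fun}(\exit_1(X),\operatorname{LMod}_R^1))$ as defined in the statement.

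The only step that requires a bit of care is checking that the equivalence of \Cref{equivalence of derived infinity-categories} is compatible with the pointwise/stalkwise perfectness condition in the expected way; this is essentially the content of the last sentence in the proof of \Cref{equivalence of derived infinity-categories}, which appeals to \Cref{exit paths and constructible sheaves equivalence arbitrary coefficients with compact stalks}, and so I would simply quote that compatibility. The hard part, therefore, is not in this corollary itself but was already handled in the preceding theorem; the present corollary is a formal consequence of passing to homotopy categories and tracking the descriptions of the relevant full subcategories.
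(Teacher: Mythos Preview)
Your proposal is correct and follows exactly the same approach as the paper, which simply states that the corollary is obtained by taking homotopy categories of the $\infty$-categorical equivalence in \Cref{equivalence of derived infinity-categories}. Your write-up spells out the identifications that the paper leaves implicit, but there is no substantive difference.
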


\section{Calculational tools}\label{calculational tools}

If $X$ is a metrisably conically stratified space, then the end point evaluation maps from appropriately chosen homotopy links are fibrations (\Cref{e fibration conically stratified space}). We identify the mapping spaces of the exit path $\infty$-category $\exit_\infty(X)$ with the fibres of these fibrations and exploit the resulting long exact sequences of homotopy groups. This follows ideas of Woolf (\cite{Woolf}). We apply these tools to determine the exit path $\infty$-category of quotients of sufficiently contractible stratified spaces under well-behaved group actions --- this recovers and strengthens results of of Chen--Looijenga (\cite{ChenLooijenga}).

\subsection{Mapping spaces, fibrations and long exact sequences}

Recall the definition of the homotopy link given in \Cref{exitcat}: for a topological space $X$ and a subspace $Y\subseteq X$, the \textit{homotopy link} of $Y$ in $X$ is the subspace of paths (equipped with the compact-open topology)
\begin{align*}
H(X,Y)=\{\gamma\colon I\rightarrow X\mid \gamma(0)\in Y,\ \gamma((0,1])\subseteq X-Y\}\subset C(I,X).
\end{align*}

We have already observed that the homotopy link is a subset of the $1$-simplices in the exit path $\infty$-category. It turns out that the mapping spaces in the exit path $\infty$-category can be identified with subspaces of the homotopy link. Note that we are not yet requiring the stratified space to be \textit{metrisably} conically stratified.

\begin{proposition}\label{M cong F}
Let $X$ be a conically $I$-stratified space, let $i< j$ in $I$ and choose $x\in X_i$, $y\in X_j$. Let $V$ be a neighbourhood of $x$ in $X_i$. The mapping space $M(x,y)$ of the exit path $\infty$-category $\exit_\infty(X)$ can be identified with the fibre $F(x,y)=e^{-1}(x,y)$ of the end point evaluation map $e\colon H(V\cup X_j,V)\rightarrow V\times X_j$, $\gamma\mapsto (\gamma(0), \gamma(1))$.
\end{proposition}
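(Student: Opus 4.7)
The strategy is to model the mapping space $M(x,y)$ by Lurie's right pinched mapping space $\operatorname{Hom}^R_{\exit_\infty(X)}(x,y)$ (\cite[\S 1.2.2]{LurieHTT}) and to identify this, as a simplicial set, with the singular complex $\operatorname{Sing}(F(x,y))$. By definition, an $n$-simplex of $\operatorname{Hom}^R_{\exit_\infty(X)}(x,y)$ is a stratum-preserving map $\sigma\colon \Delta^{n+1}\to X$ with $\sigma|_{\Delta^{\{0,\ldots,n\}}}$ constant at $x$ and $\sigma(n{+}1)=y$; since $x\in X_i$ and $y\in X_j$, the witnessing $\theta\colon[n{+}1]\to I$ is forced to be $\theta(k)=i$ for $k\leq n$ and $\theta(n{+}1)=j$, and the stratum-preserving condition amounts exactly to $\sigma(\Delta^{n+1}\setminus\Delta^{\{0,\ldots,n\}})\subseteq X_j$.

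The cone quotient $\tau_n\colon \Delta^n\times[0,1]\to\Delta^{n+1}$, $(p,t)\mapsto ((1-t)p_0,\ldots,(1-t)p_n,t)$, collapses $\Delta^n\times\{1\}$ to the vertex $n{+}1$ and is a homeomorphism off this slice. Precomposition $\sigma\mapsto \sigma\circ\tau_n$ therefore sets up a bijection between $\operatorname{Hom}^R_{\exit_\infty(X)}(x,y)_n$ and the set of continuous maps $g\colon \Delta^n\times[0,1]\to X$ with $g|_{\Delta^n\times\{0\}}=x$, $g|_{\Delta^n\times\{1\}}=y$, and $g(\Delta^n\times(0,1])\subseteq X_j$. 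By the exponential law (valid since $\Delta^n$ is compact Hausdorff), such $g$ correspond naturally to continuous maps $\Delta^n\to F(x,y)$, i.e.\ to $n$-simplices of $\operatorname{Sing}(F(x,y))$. The role of $V$ is really cosmetic: any such path has image automatically contained in $\{x\}\cup X_j\subseteq V\cup X_j$, and because $V\cup X_j$ carries the subspace topology from $X$, the space $F(x,y)\subseteq C([0,1],X)$ coincides with the fibre $e^{-1}(x,y)\subseteq H(V\cup X_j,V)$ independently of the choice of $V$.

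These bijections are natural in $n$ because the cone maps $\tau_n$ assemble into a natural transformation of cosimplicial spaces reflecting the identification $\Delta^{n+1}\cong \Delta^n\ast\Delta^0$; this yields an isomorphism of simplicial sets $\operatorname{Hom}^R_{\exit_\infty(X)}(x,y)\cong \operatorname{Sing}(F(x,y))$. Combined with the standard equivalence $M(x,y)\simeq \operatorname{Hom}^R_{\exit_\infty(X)}(x,y)$ (\cite[Corollary 4.2.1.8]{LurieHTT}) and $\operatorname{Sing}(F(x,y))\simeq F(x,y)$, this gives the desired identification. The only point requiring real attention is the simplicial naturality of the bijection, but that reduces to a routine comparison of face and degeneracy maps via the join description; the substantive content of the proposition is the explicit model for the mapping space via the cone map $\tau_n$.
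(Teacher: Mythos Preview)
Your proof is correct. The paper's argument is close in spirit but uses a different model for the mapping space: it takes
\[
M(x,y)=\{x\}\times_S S^{\Delta^1_\bullet}\times_S\{y\},\qquad S=\exit_\infty(X),
\]
so that an $n$-simplex is a simplicial map $(\Delta^n\times\Delta^1)_\bullet\to S$ with the prescribed boundary behaviour. The adjunction $|{-}|\dashv\operatorname{Sing}$ together with the exponential law then gives an isomorphism $(\operatorname{Sing}X)^{\Delta^1_\bullet}\cong\operatorname{Sing}(X^{|\Delta^1|})$ of simplicial sets, and one checks that the subsimplicial sets $M(x,y)$ and $\operatorname{Sing}(F(x,y))$ correspond under this identification.

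The difference is that the paper's $\Delta^1$-cotensor model already lives over $\Delta^n\times[0,1]$, so no cone collapse is needed and one obtains an on-the-nose isomorphism $M(x,y)\cong\operatorname{Sing}(F(x,y))$ directly. Your route via the right pinched model $\operatorname{Hom}^R$ produces the same isomorphism at the level of $\operatorname{Hom}^R$, but then relies on the equivalence $M(x,y)\simeq\operatorname{Hom}^R$ from \cite[Corollary 4.2.1.8]{LurieHTT}, so the final identification is only up to weak equivalence. For the applications in the paper (computing homotopy groups of mapping spaces via the long exact sequence) this makes no difference. Your explicit cone map $\tau_n$ and the verification of simplicial naturality are the honest analogues of the paper's one-line ``translating the conditions''; neither approach is materially harder than the other.
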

\begin{proof}

Write $S:=\exit_\infty(X)$. We use the following model for the mapping space:
\begin{align*}
M(x,y)=\{x\}\times_S S^{\Delta^1_\bullet}\times_S \{y\}.
\end{align*}
That is, an $n$-simplex of $M(x,y)$ is a simplicial map $\sigma\colon(\Delta^n\times \Delta^1)_\bullet\rightarrow S$ which satisfies $\sigma(\Delta^n_\bullet\times\{0\})=\{x\}$ and $\sigma(\Delta^n_\bullet\times\{1\})=\{y\}$ (see \cite[\S 1.2.2]{LurieHTT}).

\medskip

The simplicial sets $(\operatorname{Sing} X)^{\Delta^1_\bullet}$ and $ \operatorname{Sing}(X^{|\Delta^1_\bullet|})$ are isomorphic via the adjunction $|-|\dashv \operatorname{Sing}$ and the exponential law for topological spaces. By translating the conditions on the subsimplicial sets $M(x,y)\subseteq (\operatorname{Sing} X)^{\Delta^1_\bullet}$ and $\operatorname{Sing}(F(x,y))\subseteq \operatorname{Sing} (X^{|\Delta^1_\bullet|})$ across this isomorphism, we see that it restricts to an isomorphism $M(x,y)\cong \operatorname{Sing} (F(x,y))$.
\end{proof}

\begin{remark}\label{mapping space within same stratum}
Let $X$ be a conically $I$-stratified space, let $i,j\in I$, $x\in X_i$, $y\in X_j$ and let $V$ be a neighbourhood of $x$ in $X_i$. The proposition above implies that if $i\neq j$, then $M(x,y)\subset H(V\cup X_j,V)$. If $i=j$, then $M(x,y)\cap H(V\cup X_i,V)= \emptyset$, as $V$ is a neighbourhood of $x$ in $X_i$. In this case, however, $M(x,y)$ is the mapping space in the $\infty$-category $\operatorname{Sing}(X_i)$ which can be identified with the fibre of the path space fibration of $X_i$ with respect to the basepoint $x$. Hence, $M(x,y)$ is either empty or homotopy equivalent to the loop space $\Omega(X_i,x)$.
\exend
\end{remark}

The end point evaluation map from the homotopy link is a fibration in certain situations, for example when the stratified space is metrisably conically stratified (\Cref{e fibration conically stratified space}). The following proposition simply rewrites the long exact sequence of homotopy groups arising from this fibration. To state the proposition, we need to fix some notation and various basepoints and maps --- this is done in what we for future reference will call a preamble (there is a picture below which might help to clarify the situation).

\begin{preamble}\label{preamble LES}

Let $X$ be a conically $I$-stratified space and let $i<j$ in $I$. Fix points $x_i\in X_i$, $x_j\in X_j$ and suppose there is a conical neighbourhood $U_i$ of $x_i$ in $X$ with a stratified homeomorphism $\phi_i\colon V_i\times C(L_i)\rightarrow U_i$, where $V_i$ is a weakly contractible neighbourhood of $x_i$ in $X_i$ and the union $V_i\cup X_j$ is metrisable. Suppose $M(x_i,x_j)\neq \emptyset$ and fix a path $\gamma_{ij}\in M(x_i,x_j)$; fix also an $\epsilon\in (0,1)$, for instance $\epsilon=\tfrac{1}{2}$.

\medskip

The end point evaluation map
\begin{align*}
e_{ij}\colon H(V_i\cup X_j,V_i)\rightarrow V_i\times X_j,\quad\quad \gamma\mapsto \big(\gamma(0),\gamma(1)\big)
\end{align*}
is a fibration (\Cref{e fibration conically stratified space}) and in view of \Cref{M cong F}, we may identify $M(x_i,x_j)$ with the fibre $e_{ij}^{-1}(x_i,x_j)$. Writing $L_{ij}=(L_i)_j$ for the $j$'th stratum of the link space, the map
\begin{align*}
\Psi_{ij}\colon L_{ij}\rightarrow H(V_i\cup X_j,V_i),\quad\quad
l\mapsto \big(\gamma_{x_i,l,\epsilon}\colon t\mapsto \phi_i(x_i,[l,t\epsilon])\big)
\end{align*}
is a homotopy equivalence (\Cref{link eq to htpy link}). Fix a homotopy inverse
\begin{align*}
\Psi^h_{ij}\colon H(V_i\cup X_j,V_i)\rightarrow L_{ij}
\end{align*}
and a homotopy
\begin{align*}
h\colon H(V_i\cup X_j,V_i)\times [0,1]\rightarrow H(V_i\cup X_j,V_i),\quad h\colon \operatorname{id} \sim \Psi_{ij}\circ \Psi_{ij}^h.
\end{align*}

Consider the embedding of the $j$'th link space stratum $L_{ij}$ into $X_j$
\begin{align*}
\phi_{ij}\colon L_{ij}\rightarrow X_j,\quad l\mapsto \phi_i(x_i,[l,\epsilon]).
\end{align*}

Finally, set $l_{ij}:=\Psi^h_{ij}(\gamma_{ij})\in L_{ij}$ and define a path
\begin{align*}
\eta_{ij}:=h(\gamma_{ij},-)(1)\colon [0,1]\rightarrow X_j
\end{align*}
from $x_j$ to $\phi_{ij}(l_{ij})$.

\medskip

The situation can be pictured as follows.

\begin{center}
\includegraphics[width=\textwidth]{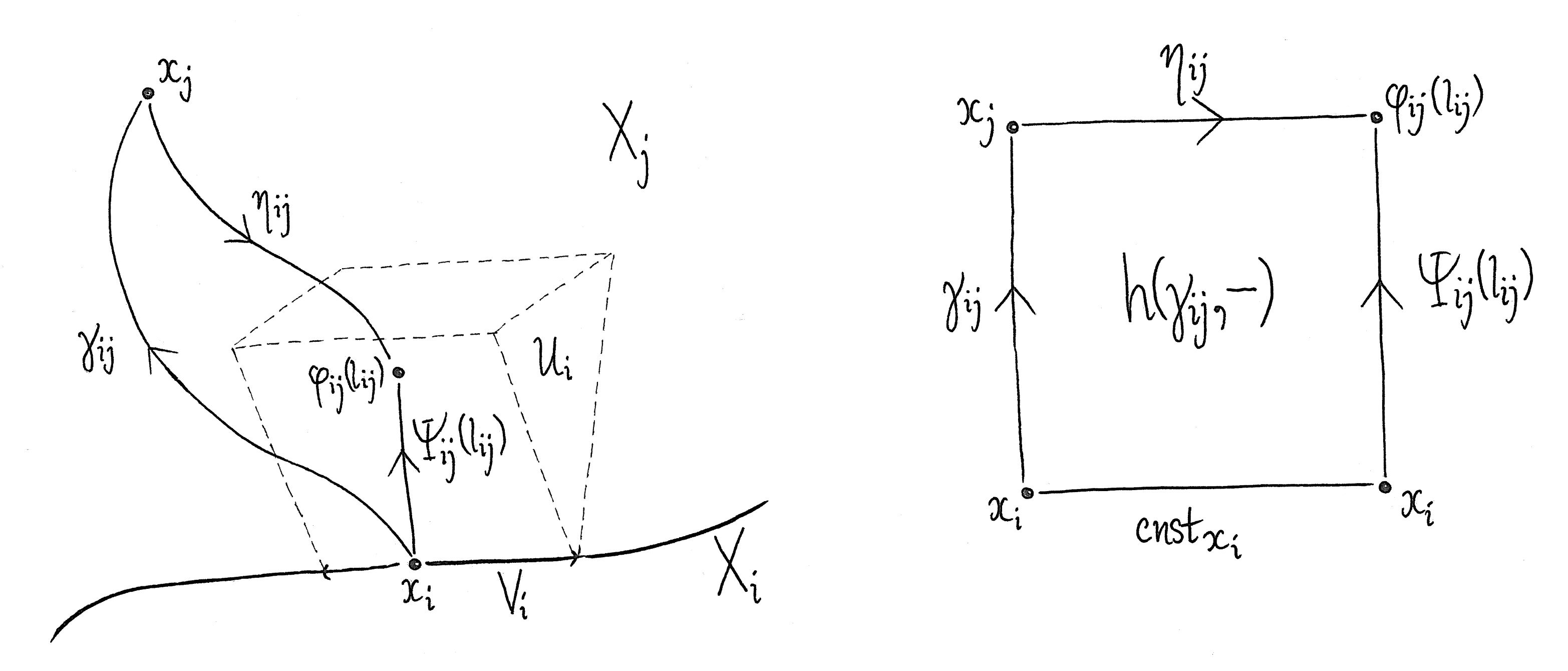}
\end{center}
\exend
\end{preamble}

\begin{proposition}\label{Homsets-LES}
In the situation of \Cref{preamble LES}, there is a long exact sequence of homotopy groups
\begin{center}
\begin{tikzpicture}
\matrix (m) [matrix of math nodes,row sep=1em,column sep=1em]
  {
    \cdots & \pi_n(L_{ij},l_{ij}) & \pi_n(X_j,x_j) & \pi_{n-1}(M(x_i,x_j), \gamma_{ij}) & \cdots & & & \\
  };
  \path[-stealth]
	(m-1-1) edge (m-1-2)
	(m-1-2) edge (m-1-3)
	(m-1-3) edge (m-1-4)
	(m-1-4) edge (m-1-5)
  ;
\end{tikzpicture}

\begin{tikzpicture}
\matrix (m) [matrix of math nodes,row sep=1em,column sep=1em]
  {
   & & & \quad\quad & \cdots & \pi_1(L_{ij},l_{ij}) & \pi_1(X_j,x_j) & \pi_{0}(M(x_i,x_j),\gamma_{ij}) & \cdots \\
  };
  \path[-stealth]
	(m-1-5) edge (m-1-6)
	(m-1-6) edge node[above]{$\phi$} (m-1-7)
	(m-1-7) edge node[above]{$\partial$} (m-1-8)
  	(m-1-8) edge (m-1-9)
  ;
\end{tikzpicture}
\end{center}

The map $\phi$ is given by conjugation by $\eta_{ij}$:
\begin{align*}
\phi\colon \pi_1(L_{ij},l_{ij})\longrightarrow \pi_1(X_j,x_j),\quad\quad [\alpha]\mapsto [\eta_{ij}^{-1}* ((\phi_{ij})_*\alpha) * \eta_{ij}],
\end{align*}
and the boundary map $\partial$ is given by concatenation with $\gamma_{ij}$:
\begin{align*}
\partial\colon \pi_1(X_j,x_j) \longrightarrow \pi_0(M(x_i,x_j),\gamma_{ij}),\quad\quad [\alpha]\mapsto [\alpha* \gamma_{ij}].
\end{align*}
\end{proposition}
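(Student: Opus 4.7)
The plan is to apply the long exact sequence of homotopy groups to the fibration
\[
e_{ij}\colon H(V_i\cup X_j,V_i)\to V_i\times X_j,
\]
which is a fibration by Proposition~\ref{e fibration conically stratified space} (the metrisability of $V_i\cup X_j$ is part of the preamble) and whose fibre over $(x_i,x_j)$ is identified with $M(x_i,x_j)$ by Proposition~\ref{M cong F}. Taking basepoint $\gamma_{ij}$ in the total space, I would simplify the other two terms: the base $V_i\times X_j$ has $\pi_n \cong \pi_n(X_j,x_j)$ via the projection since $V_i$ is weakly contractible, while the total space has $\pi_n(H(V_i\cup X_j,V_i),\gamma_{ij})\cong \pi_n(L_{ij},l_{ij})$ by combining the homotopy equivalence $\Psi_{ij}$ of Proposition~\ref{link eq to htpy link} with the basepoint-change isomorphism along the path $t\mapsto h(\gamma_{ij},t)$ running from $\gamma_{ij}$ to $\Psi_{ij}(l_{ij})$. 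Substituting into the fibration long exact sequence produces the displayed sequence.

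To identify $\phi$, I would trace a class $[\alpha]\in \pi_n(L_{ij},l_{ij})$ through the chain $\Psi_{ij}$, basepoint-change, $(e_{ij})_*$, and projection to $X_j$. The loop $\Psi_{ij}(\alpha)$ at $\Psi_{ij}(l_{ij})$ maps under the projection of $e_{ij}$ to $\phi_{ij}\circ\alpha$, since by definition $e_{ij}(\Psi_{ij}(l))=(x_i,\phi_{ij}(l))$. The path $h(\gamma_{ij},-)$ used for the basepoint change maps under the same composition to $t\mapsto h(\gamma_{ij},t)(1)$, which is $\eta_{ij}$ by definition. Conjugating the loop $\phi_{ij}\circ\alpha$ (based at $\phi_{ij}(l_{ij})$) by $\eta_{ij}$ to shift the basepoint back to $x_j$ then yields the stated formula.

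To identify $\partial$, I would use the standard recipe for the connecting homomorphism: lift the loop $(c_{x_i},\alpha)$ in $V_i\times X_j$ to a path in $H(V_i\cup X_j,V_i)$ starting at $\gamma_{ij}$, and read off its endpoint in the fibre. An explicit lift is
\[
\tilde{\Gamma}(s)(t)=\begin{cases}\gamma_{ij}(2t) & t\leq 1/2,\\ \alpha(s(2t-1)) & t\geq 1/2,\end{cases}
\]
possibly reparametrised so that $\tilde{\Gamma}(0)=\gamma_{ij}$ on the nose; it lands in $H(V_i\cup X_j,V_i)$ because the only value in $V_i$ occurs at $t=0$, and $e_{ij}(\tilde{\Gamma}(s))=(x_i,\alpha(s))$ by construction. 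The endpoint $\tilde{\Gamma}(1)$ is the concatenation first tracing $\gamma_{ij}$ and then $\alpha$, which in the right-to-left convention for composition is $\alpha*\gamma_{ij}$.

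I expect the main obstacle to be the careful bookkeeping of basepoints and of the left/right conventions in identifying $\phi$, since several homotopy equivalences and paths are strung together; the description of $\partial$ is more direct, the only delicate point being that $\tilde{\Gamma}$ is genuinely continuous as a map into $H(V_i\cup X_j,V_i)$ with the compact-open topology, which reduces to continuity of the associated map $[0,1]^2\to V_i\cup X_j$.
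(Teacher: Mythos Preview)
Your proposal is correct and follows essentially the same approach as the paper: both take the long exact sequence of the fibration $e_{ij}$, replace the base by $\pi_n(X_j,x_j)$ using weak contractibility of $V_i$, replace the total space by $\pi_n(L_{ij},l_{ij})$ via $\Psi_{ij}$ together with the basepoint change along $h(\gamma_{ij},-)$, and then unwind $\phi$ and $\partial$ explicitly. The only cosmetic difference is in the identification of $\partial$: the paper factors through the relative group $\pi_1\big(H(V_i\cup X_j,V_i),M(x_i,x_j),\gamma_{ij}\big)$ and uses the lift $t\mapsto \alpha|_{[0,1-t]}*\gamma_{ij}$ ending at $\gamma_{ij}$, whereas you write down a lift starting at (a reparametrisation of) $\gamma_{ij}$ and read off the other end; both are standard descriptions of the same connecting map and yield the same formula.
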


\begin{proof}

We have a long exact sequence of homotopy groups arising from the fibration $e_{ij}$ in which we may replace $\pi_n(V_i,x_i)$ by $0$:
\begin{center}
\begin{tikzpicture}
\matrix (m) [matrix of math nodes,row sep=1em,column sep=1em]
  {
    \cdots & \pi_n(H(V_i\cup X_j,V_i),\gamma_{ij}) & \pi_n(X_j,x_j) & \pi_{n-1}(M(x_i,x_j), \gamma_{ij}) & \cdots \\
  };
  \path[-stealth]
	(m-1-1) edge (m-1-2)
	(m-1-2) edge (m-1-3)
	(m-1-3) edge (m-1-4)
	(m-1-4) edge (m-1-5)
  ;
\end{tikzpicture}
\end{center}

We replace $\pi_n(H(V_i\cup X_j,V_i), \gamma_{ij})$ by $\pi_n(L_{ij},l_{ij})$ via the homotopy equivalence $\Psi_{ij}$ and a basepoint change from $\Psi_{ij}(l_{ij})$ to $\gamma_{ij}$:
\begin{align*}
C_{h(\gamma_{ij},-)}\circ (\Psi_{ij})_*\colon \pi_n(L_{ij},l_{ij})\xrightarrow{\ \cong\ } \pi_n(H(V_i\cup X_j,V_i), \gamma_{ij})
\end{align*}
where $C_{h(\gamma_{ij},-)}$ denotes conjugation by the path $t\mapsto h(\gamma_{ij},t)$.

\medskip

To see that the maps are as claimed, let $\operatorname{pr}_j$ denote the projection to $X_j$ and $C_{\eta_{ij}}$ conjugation by $\eta_{ij}$. Then
\begin{align*}
\phi = (\operatorname{pr}_j\circ e_{ij})_*\circ C_{h(\gamma_{ij},-)}\circ (\Psi_{ij})_*=C_{\eta_{ij}}\circ (\phi_{ij})_*.
\end{align*}

For the boundary map $\partial$, note that it is equal to the following composite
\begin{align*}
\pi_1(X_j,x_j) \mathop{\xleftarrow{\ (\operatorname{pr}_j\circ e_{ij})_*\ }}_{\cong} \pi_1\bigg(H(V_i\cup X_j,V_i),M(x_i,x_j), \gamma_{ij}\bigg) \mathop{\longrightarrow}^{\delta} \pi_0(M(x_i,x_j), \gamma_{ij}),
\end{align*}
where the middle term is the relative homotopy group and $\delta$ is the boundary map in the long exact sequence of homotopy groups of the pair $(H(V_i\cup X_j,V_i),M(x_i,x_j))$. This is given by sending a map $f\colon [0,1]\rightarrow H(V_i\cup X_j,V_i)$ representing an element in the relative $\pi_1$ to the starting point $f(0)\in M(x_i,x_j)$. The inverse to $(\operatorname{pr}_j\circ e_{ij})_*$ is given by lifting a loop $[0,1]\rightarrow X_j$ to a path $[0,1]\rightarrow H(X_j\cup V_i,V_i)$ with end point $\gamma_{ij}$ (\cite[proof of Theorem 4.41]{Hatcher}). This is independent of the choice of lift, so for any $\alpha\colon [0,1]\rightarrow X_j$ with $\alpha(0)=\alpha(1)=x_j$, we may choose the lift $\tilde{\alpha}\colon t\mapsto \alpha|_{[0,1-t]}*\gamma_{ij}$, and we see that $\partial$ is given by $[\alpha]\mapsto[\tilde{\alpha}(0)]=[\alpha*\gamma_{ij}]$ as claimed.
\end{proof}

For mapping spaces within one stratum, \Cref{mapping space within same stratum} gives the following identification.

\begin{proposition}\label{mapping spaces within stratum homotopy groups}
Let $X$ be a conically $I$-stratified space. Let $i\in I$ and fix $x_i,x_i'\in X_i$. If $M(x_i,x_i')\neq \emptyset$, then $M(x_i,x_i')$ has the homotopy type of the loop space $\Omega (X_i,x_i)$. In particular, $\pi_n(M(x_i,x_i'),\gamma)\cong \pi_{n+1}(X_i,x_i)$ for all $n\geq 0$ and any choice of basepoint $\gamma\in M(x_i,x_i')$.
\end{proposition}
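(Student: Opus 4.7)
The plan is to reduce the statement to the classical fact that, when non-empty, the space of paths from $x_i$ to $x_i'$ in $X_i$ is homotopy equivalent to the loop space $\Omega(X_i, x_i)$ via concatenation with a fixed path. The work is therefore to identify the mapping space $M(x_i, x_i')$ in $\exit_\infty(X)$ with this topological path space; this was signalled already in \Cref{mapping space within same stratum}, and I would now make it precise along the lines of the proof of \Cref{M cong F}.

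The first step is to show that the full sub-$\infty$-category of $\exit_\infty(X)$ spanned by the points of $X_i$ is exactly $\operatorname{Sing}(X_i)$. The key observation is that if $\tau\colon \Delta^k \to X$ is an exit simplex with all vertices in $X_i$, then the order-preserving map $\theta\colon [k] \to I$ witnessing $s\circ \tau = \theta\circ s_k$ must satisfy $\theta(v) = i$ for every vertex $v$ (because $s_k$ is the identity on vertices and $s\circ \tau(v) = i$). Hence $\theta$ is constant at $i$, so $s\circ \tau$ is constant at $i$, i.e.~$\tau$ factors through $X_i$. Since every simplex of $\operatorname{Sing}(X_i)$ is trivially an exit simplex of $X$, the two subsimplicial sets agree.

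The second step is to translate this into a statement about $M(x_i, x_i')$. Using the model for the mapping space from the proof of \Cref{M cong F}, an $n$-simplex of $M(x_i, x_i')$ is a simplicial map $(\Delta^n \times \Delta^1)_\bullet \to \exit_\infty(X)$ whose left face is constantly $x_i$ and whose right face is constantly $x_i'$. By the previous step every such map factors through $\operatorname{Sing}(X_i)$, and then the adjunction $|-|\dashv \operatorname{Sing}$ together with the exponential law identifies $M(x_i, x_i')$ with the singular complex of the topological path space
\begin{align*}
P(X_i; x_i, x_i') = \{\gamma\colon [0,1] \to X_i \mid \gamma(0) = x_i,\ \gamma(1) = x_i'\}
\end{align*}
with the compact-open topology.

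Finally, if this path space is non-empty, pick any $\beta \in P(X_i; x_i, x_i')$; the concatenation map $\Omega(X_i, x_i) \to P(X_i; x_i, x_i')$, $\alpha \mapsto \alpha * \beta$, is a homotopy equivalence by the standard argument (an inverse is given by concatenation with $\beta^{-1}$, up to the usual reparametrisation homotopy). Taking $\beta = \gamma$ yields the claimed isomorphisms $\pi_n(M(x_i, x_i'), \gamma) \cong \pi_{n+1}(X_i, x_i)$. I do not anticipate any genuine obstacle: the proposition is essentially a bookkeeping consequence of the fact that exit paths with both endpoints in the same stratum cannot leave that stratum.
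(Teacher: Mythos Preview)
Your proposal is correct and follows essentially the same line as the paper, which does not give a standalone proof but simply refers back to \Cref{mapping space within same stratum}: there it is observed that the full sub-$\infty$-category of $\exit_\infty(X)$ spanned by the points of $X_i$ is $\operatorname{Sing}(X_i)$, so $M(x_i,x_i')$ is the fibre of the path space fibration of $X_i$ and hence homotopy equivalent to $\Omega(X_i,x_i)$ when non-empty. You have spelled out the verification that every exit simplex with vertices in $X_i$ lands in $X_i$ and made the identification with $P(X_i;x_i,x_i')$ explicit, which is exactly the content left implicit in the paper's remark.
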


We have the following corollary.

\begin{corollary}\label{Homsets-SES}
Let $X$ be a conically $I$-stratified space. Let $i<j$, $x_i\in X_i$ and $x_j\in X_j$, and assume that the assumptions of \Cref{preamble LES} can be satisfied. Assume additionally that for any choice of $\gamma_{ij}\in M(x_i,x_j)$ in the situation of \Cref{preamble LES}, the following holds:
\begin{enumerate}[label=(\roman*)]
\item the map $\phi_{ij}\colon L_{ij}\rightarrow X_j$ is injective on $\pi_1$;
\item $\pi_n(X_j,x_j)=0$ for all $n>1$;
\item $\pi_n(L_{ij},l_{ij})=0$ for all $n>1$.
\end{enumerate}
Then the mapping space $M(x_i,x_j)$ has contractible path components and the set of path components fits into a $5$-term exact sequence
\begin{align*}
0\rightarrow \pi_1(L_{ij}, l_{ij})\xrightarrow{\phi} \pi_1(X_j,x_j)\xrightarrow{\partial} \pi_0(M(x_i,x_j))\rightarrow \pi_0(L_{ij})\rightarrow \pi_0(X_j)\rightarrow 0,
\end{align*}
where $\phi$ and $\partial$ are as described in \Cref{Homsets-LES}.

\medskip

In particular, if X is a metrisably conically stratified space admitting conical neighbourhoods with weakly contractible deepest stratum and if (i)-(iii) hold for all $i<j$, and $x_i\in X_i$ and $x_j\in X_j$ with $M(x_i,x_j)\neq \emptyset$, then the exit path $\infty$-category is equivalent to the nerve of its homotopy category $\exit_1(X)$ and the hom-sets in $\exit_1(X)$ can be identified using the exact sequences above and the isomorphisms $\pi_0(M(x_i,x_i'))\cong \pi_1(X_i,x_i)$ for $x_i,x_i'\in X_i$ in the same path component.
\end{corollary}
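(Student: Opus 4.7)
The plan is to extract the entire statement from the long exact sequence of \Cref{Homsets-LES} by using hypotheses (i)--(iii) to kill most of its terms. Since the map $\phi$ of \Cref{Homsets-LES} factors as $C_{\eta_{ij}}\circ(\phi_{ij})_{*}$, where conjugation by $\eta_{ij}$ is an isomorphism, hypothesis (i) is equivalent to $\phi$ being injective on $\pi_{1}$. Feeding the vanishings from (ii) and (iii) into the segment
\begin{equation*}
\pi_{n+1}(X_{j},x_{j})\longrightarrow \pi_{n}(M(x_{i},x_{j}),\gamma_{ij})\longrightarrow \pi_{n}(L_{ij},l_{ij})
\end{equation*}
forces $\pi_{n}(M(x_{i},x_{j}),\gamma_{ij})=0$ for all $n\geq 2$. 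For $n=1$ the segment $\pi_{2}(X_{j},x_{j})\to\pi_{1}(M(x_{i},x_{j}),\gamma_{ij})\to\pi_{1}(L_{ij},l_{ij})\xrightarrow{\phi}\pi_{1}(X_{j},x_{j})$ combined with injectivity of $\phi$ forces $\pi_{1}(M(x_{i},x_{j}),\gamma_{ij})=0$ as well. The remaining tail
\begin{equation*}
0\to\pi_{1}(L_{ij},l_{ij})\xrightarrow{\phi}\pi_{1}(X_{j},x_{j})\xrightarrow{\partial}\pi_{0}(M(x_{i},x_{j}))\to\pi_{0}(L_{ij})\to\pi_{0}(X_{j})\to 0
\end{equation*}
of the long exact sequence is exactly the claimed five-term sequence.

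To upgrade the vanishing at the chosen basepoint to contractibility of \emph{every} component of $M(x_{i},x_{j})$, I observe that hypotheses (i)--(iii) do not depend on the distinguished element $\gamma_{ij}$, so re-running the argument with any $\gamma\in M(x_{i},x_{j})$ playing the role of $\gamma_{ij}$ shows that every path component has trivial higher homotopy groups; since $M(x_{i},x_{j})$ is a Kan complex (being a mapping space in a quasi-category), each component is contractible.

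For the ``in particular'' clause I will use the standard criterion that an $\infty$-category is equivalent to the nerve of its homotopy category if and only if all of its mapping spaces are homotopically discrete, i.e.\ have contractible path components. The between-stratum mapping spaces are covered by the first part. For a within-stratum mapping space $M(x_{i},x_{i}')$, \Cref{mapping spaces within stratum homotopy groups} identifies it (when non-empty) with $\Omega(X_{i},x_{i})$, and the required contractibility of components amounts to $\pi_{n}(X_{i},x_{i})=0$ for $n\geq 2$, which is furnished by hypothesis (ii) applied to any pair $(k,i)$ with $k<i$. The stated description of the hom-sets of $\exit_{1}(X)$ then falls out of the five-term exact sequences together with the isomorphisms $\pi_{0}(M(x_{i},x_{i}'))\cong \pi_{1}(X_{i},x_{i})$ supplied by \Cref{mapping spaces within stratum homotopy groups}.

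The main technical delicacy will be the basepoint-independence argument outlined above, which secures contractibility of every component rather than just the one containing the distinguished path $\gamma_{ij}$; everything else is bookkeeping inside the long exact sequence.
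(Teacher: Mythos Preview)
Your proposal is correct and matches the paper's intended reasoning; the paper states this result as an immediate corollary of \Cref{Homsets-LES} and \Cref{mapping spaces within stratum homotopy groups} without giving a separate proof, and your extraction of the five-term sequence and the vanishing of higher homotopy from the long exact sequence, together with the basepoint-independence observation, is exactly how it is meant to be read.

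One small wrinkle in your ``in particular'' argument: you deduce asphericity of a stratum $X_i$ by invoking hypothesis (ii) for a pair $(k,i)$ with $k<i$, but this does not cover strata that are minimal in $I$ (there is no such $k$). In the paper's applications this never arises---the strata are always verified to be $K(\pi,1)$'s independently (see the proof of \Cref{exit path categories and group actions}, where the strata are explicitly Eilenberg--MacLane spaces)---so the imprecision is really in the phrasing of the corollary rather than a flaw in your strategy, but it is worth being aware that the within-stratum discreteness for minimal strata needs a separate justification.
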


This result identifies (the homotopy type of) the mapping spaces in the exit path $\infty$-category, but does not tell us anything about composition. If, however, the stratified space is sufficiently contractible, then we can use \Cref{Homsets-SES} to identify the exit path $\infty$-category as in the following corollary.

\begin{corollary}\label{Exitcat=Poset}
Let $X$ be a metrisably conically $I$-stratified space with path connected, weakly contractible strata, and suppose $X$ admits conical neighbourhoods with weakly contractible strata. Then the exit path $\infty$-category $\exit_\infty(X)$ is equivalent to the nerve of its homotopy category $\exit_1(X)$ which in turn is equivalent to the poset of strata $I$.
\end{corollary}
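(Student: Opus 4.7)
The plan is to apply Corollary \ref{Homsets-SES} to show the exit path $\infty$-category is equivalent to the nerve of $\exit_1(X)$, and then to identify $\exit_1(X)$ with $I$ by showing the functor induced by the stratification is an equivalence of $1$-categories.

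First I would observe that the stratification $s\colon X \to I$ induces a natural functor of simplicial sets $\bar{s}\colon \exit_\infty(X) \to N(I)$; it suffices to show $\bar{s}$ is an equivalence of $\infty$-categories. To invoke the ``in particular'' clause of Corollary \ref{Homsets-SES}, I would verify conditions (i)--(iii) of that corollary for all $i < j$ in $I$ and all $x_i \in X_i$, $x_j \in X_j$ with $M(x_i, x_j) \neq \emptyset$. Condition (ii) is immediate since each $X_j$ is weakly contractible. The hypothesis that $X$ admits conical neighbourhoods with weakly contractible strata ensures, in the setup of \Cref{preamble LES}, that the link stratum $L_{ij}$ can be chosen weakly contractible; thus $\pi_1(L_{ij}) = 0$, trivially giving (i), and $\pi_n(L_{ij}) = 0$ for all $n > 1$, giving (iii). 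Corollary \ref{Homsets-SES} then yields $\exit_\infty(X) \simeq N(\exit_1(X))$.

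Next I would show that $\bar{s}\colon \exit_1(X) \to I$ is an equivalence of $1$-categories. Essential surjectivity is immediate since each $X_i$ is nonempty. For full-faithfulness I would analyse $\exit_1(X)(x, y) = \pi_0(M(x, y))$ by cases. If $i = s(x) \not\leq j = s(y)$, the stratum-preserving condition forces $M(x, y) = \emptyset$, matching $N(I)(i, j) = \emptyset$. If $i = j$, \Cref{mapping spaces within stratum homotopy groups} combined with the weak contractibility of $X_i$ yields $\pi_0(M(x, y)) \cong \pi_1(X_i, x) = \ast$, while $M(x, y)$ is nonempty because $X_i$ is path-connected. If $i < j$, the 5-term exact sequence of Corollary \ref{Homsets-SES} collapses to $0 \to 0 \to 0 \to \pi_0(M(x, y)) \to \ast \to \ast \to 0$, forcing $\pi_0(M(x, y))$ to be a singleton, granted it is nonempty.

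The main subtlety is the nonemptiness of $M(x_i, x_j)$ for $i < j$, needed both to apply Corollary \ref{Homsets-SES} (which presupposes a chosen basepoint $\gamma_{ij}$) and to conclude the exact sequence argument. I would resolve this by picking any $l \in L_{ij}$---which exists by weak contractibility of $L_{ij}$---so that $t \mapsto \phi_i(x_i, [l, t\epsilon])$ defines an exit path from $x_i$ to $\phi_{ij}(l) \in X_j$; concatenating with a path in $X_j$ from $\phi_{ij}(l)$ to $x_j$ (available by path-connectedness of $X_j$) yields the required element of $M(x_i, x_j)$, and completes the identification of $\bar{s}$ as an equivalence.
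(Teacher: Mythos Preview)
Your proposal is correct and follows exactly the approach the paper intends: the corollary is stated without proof precisely because it is the immediate specialisation of \Cref{Homsets-SES} to the case where all link strata and all ambient strata are weakly contractible, forcing every mapping space to have a single contractible component. Your explicit treatment of the nonemptiness of $M(x_i,x_j)$ via the conical coordinate path concatenated with a path in $X_j$ is a useful detail that the paper leaves implicit.
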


\subsection{Group actions and exit path \texorpdfstring{$\infty$}{infinity}-categories}\label{stratified spaces and group actions}

In this section we determine the exit path $\infty$-category of stratified spaces obtained via suitably well-behaved group actions. The results should be compared with that of \cite[Theorem 1.7]{ChenLooijenga} (see \Cref{compare Chen-Looijenga}).

\medskip

A (left) action of a discrete group $G$ on a stratified space $s\colon X\rightarrow I$ consists of compatible continuous (left) actions of $G$ on $X$ and $I$, i.e.~such that the stratification map $s$ is equivariant. Recall that an action of $G$ on $X$ is \textit{properly discontinuous} if each point $x\in X$ has a neighbourhood $U$ such that the set $\{g\in G\mid g.U\cap U\neq \emptyset\}$ is finite.

\begin{remark}
A word of warning: the cones in the following theorem are the stratified cones of \Cref{stratified cone}. If $L_i$ is compact Hausdorff, then it coincides with the usual topological cone, but generally they are different. In Corollaries \ref{exit path categories and group actions II} and \ref{exit path categories and group actions II with composition} below we present a different version of this theorem in which we allow neighbourhoods in $X$ which locally look like (stratified) topological cones.
\exend
\end{remark}

\begin{theorem}\label{exit path categories and group actions}
Let $X\rightarrow I$ be a stratified space with path connected, weakly contractible strata, with $I$ satisfying the ascending chain condition, and with surjective stratification map. Suppose $G$ is a discrete group acting on $X\rightarrow I$ and let $\pi\colon X\rightarrow G\backslash X$ denote the quotient map. For any $i\in I$, denote by $G_i$ the stabiliser of $i$ and let $G_i^\ell\leq G_i$ denote the subgroup which fixes $X_i$ pointwise. Suppose that for all $i\in I$ and all $x\in X_i$ there is:
\begin{enumerate}[label=(\roman*)]
\item a $G_i^\ell$-invariant neighbourhood $U_i$ of $x$ in $X$ satisfying
\begin{align*}
\{g\in G\mid g. U_i \cap U_i\neq \emptyset \} = G_i^\ell,
\end{align*}
and such that $V_i=U_i\cap X_i$ is weakly contractible;
\item a stratified space $L_i\rightarrow I_{>i}$ with weakly contractible strata, surjective stratification map and which is equipped with with an action of $G_i^\ell$ (where the action on $I_{>i}$ is the restriction of the one of $G_i$);
\item a $G_i^\ell$-equivariant stratified homeomorphism
\begin{align*}
\phi_i\colon V_i\times C(L_i)\xrightarrow{\ \cong \ } U_i,
\end{align*}
where $G_i^\ell$ acts only on the $L_i$-coordinate of the left hand side, $g.(x,[l,t])=(x,[g.l,t])$, and such that $\phi_i$ restricts to the identity on $V_i\times \{*\}$;
\item and assume additionally that for all $j>i$, the union $V_i\cup X_j$ and its image $\pi(V_i\cup X_j)$ are metrisable.
\end{enumerate}
Then $X\rightarrow I$ and $G\backslash X\rightarrow G\backslash I$ are metrisably conically stratified spaces whose exit path $\infty$-categories are equivalent to the nerves of their homotopy categories. The exit path $1$-category of $X$ is equivalent to the poset $I$, and the exit path $1$-category of $G\backslash X$ is equivalent to the category $\mathscr{C}_{G,X}$ with objects the elements of $I$ and hom-sets
\begin{align*}
\mathscr{C}_{G,X}(i,j)=\{g\in G\mid g.i\leq j\}/G_i^\ell,
\end{align*}
where $G_i^\ell$ acts by right multiplication and with composition given by the product in $G$. Moreover, the equivalences can be chosen such that the following diagram commutes, where $\pi_*$ is the functor induced by the quotient map $\pi\colon X\rightarrow G\backslash X$ and the top vertical map sends $i\leq j$ to the morphism $i\rightarrow j$ represented by the identity element of $G$:
\begin{center}
\begin{tikzpicture}
\matrix (m) [matrix of math nodes,row sep=2em,column sep=2em]
  {
	I & \mathscr{C}_{G,X} \\
	\exit_1(X) & \exit_1(G\backslash X) \\
  };
  \path[-stealth]
  (m-1-1) edge (m-1-2) edge node[left]{$\sim$} (m-2-1)
  (m-1-2) edge node[right]{$\sim$} (m-2-2)
  (m-2-1) edge node[below]{$\pi_*$} (m-2-2)
;
\end{tikzpicture}
\end{center}
\end{theorem}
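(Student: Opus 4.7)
The first paragraph of work is to set up the stratified structures and dispose of the easier claim about $X$. Conditions (i)--(iii) say that every $x \in X_i$ admits a conical neighbourhood of the required form, and (iv) supplies metrisability, so $X$ is metrisably conically stratified; since (ii) says the link strata are weakly contractible and the global strata are weakly contractible by hypothesis, \Cref{Exitcat=Poset} immediately gives $\exit_\infty(X) \simeq N(I)$, and the composite $I \simeq \exit_1(X) \xrightarrow{\pi_*} \exit_1(G\backslash X)$ will be identified with the top map of the diagram.

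Next I would construct conical neighbourhoods in $G\backslash X$. The key point is that hypothesis (i), ``$\{g \in G \mid g.U_i \cap U_i \neq \emptyset\} = G_i^\ell$'', identifies the restriction of $\pi\colon X \to G\backslash X$ over $\pi(U_i)$ with $U_i \to G_i^\ell \backslash U_i$. Since $G_i^\ell$ acts trivially on $V_i$ and only on the $L_i$-coordinate of $V_i \times C(L_i)$, the quotient is $V_i \times C(G_i^\ell \backslash L_i)$, a conical neighbourhood of $\pi(x)$ with link $G_i^\ell \backslash L_i$ stratified over $G_i^\ell \backslash I_{>i}$. Combined with (iv) this shows $G\backslash X$ is metrisably conically stratified. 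A further consequence of (i) is that $\mathrm{Stab}_G(x) = G_i^\ell$ for every $x \in X_i$, so the residual action of $G_i/G_i^\ell$ on $X_i$ is free and properly discontinuous; as $X_i$ is weakly contractible, the stratum $(G\backslash X)_{[i]} = G_i \backslash X_i$ is a $K(G_i/G_i^\ell,1)$. The same argument, applied to the $(G_i^\ell)_j$-action on $L_{ij}$, shows that the $[j]$-stratum $L'_{ij}$ of $G_i^\ell \backslash L_i$ is a disjoint union of $K(\pi,1)$'s.

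The main computation is then to apply \Cref{Homsets-SES} at each pair $\pi(x_i) < \pi(x_j)$. The vanishing of $\pi_n$ for $n > 1$ of strata and link strata is immediate from the preceding $K(\pi,1)$-identification; the injectivity of $\phi_{ij}\colon L'_{ij} \to (G\backslash X)_{[j]}$ on $\pi_1$ reduces, via covering space theory for the $G$-action on $X$ and $G_i^\ell$-action on $L_i$, to the statement that the stabilisers $(G_i^\ell)_j$ inject into $G_j/G_j^\ell$ along the natural map coming from $\phi_{ij}$, which is straightforward to verify from the definitions. Hence mapping spaces have contractible components and $\exit_\infty(G\backslash X) \simeq N(\exit_1(G\backslash X))$. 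A bijection between the component set of $M(\pi(x_i), \pi(x_j))$ and $\mathscr{C}_{G,X}([i],[j])$ is then obtained by lifting an exit path: a path $\pi\sigma$ from $\pi(x_i)$ to $\pi(x_j)$ lifts uniquely to a path $\sigma$ in $X$ starting at $x_i$, with endpoint $g.x_j$ for a unique $g \in G$; the exit-path condition forces $g^{-1}.i \leq j$, and two lifts differ by $\mathrm{Stab}_G(x_i) = G_i^\ell$ acting on the left of $g$, which corresponds to right multiplication by $G_i^\ell$ on $h := g^{-1}$. The $5$-term sequence of \Cref{Homsets-SES} then reads off as the short exact description of $\{h \in G \mid h.i \leq j\}/G_i^\ell$.

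The remaining step is composition and the diagram. Given $[h_1]\colon [i] \to [j]$ and $[h_2]\colon [j] \to [k]$, I would lift the two exit paths to $X$ starting at $x_i$ and $x_j$ respectively (ending at $h_1^{-1}.x_j$ and $h_2^{-1}.x_k$), translate the second lift by $h_1^{-1}$ so that its starting point matches the first lift's endpoint, and concatenate; the concatenated exit path ends at $(h_2 h_1)^{-1}.x_k$, which under the correspondence above represents $h_2 h_1$. This matches the composition rule in $\mathscr{C}_{G,X}$. Commutativity of the square is then built into the construction: the natural map $I \to \mathscr{C}_{G,X}$ sending $i \leq j$ to $[e]$ corresponds geometrically to the identity lift $\sigma$ with $\sigma(1) = x_j$. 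The main obstacle I anticipate is the verification of $\pi_1$-injectivity of $\phi_{ij}$, and more generally keeping the basepoint bookkeeping in \Cref{preamble LES} consistent across the quotient; both reduce ultimately to the very rigid statement that $\mathrm{Stab}_G(x) = G_i^\ell$ provided by hypothesis (i).
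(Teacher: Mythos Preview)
Your proposal is essentially correct and follows the paper's strategy: descend the conical neighbourhoods to $G\backslash X$, identify strata and link strata as $K(\pi,1)$'s via the free properly discontinuous actions guaranteed by (i), apply \Cref{Homsets-SES} to see that mapping spaces have contractible components, and set up a comparison with $\mathscr{C}_{G,X}$ compatible with the 5-term sequence.

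One difference worth flagging: you build the comparison by \emph{lifting} exit paths from $G\backslash X$ to $X$, whereas the paper goes the other direction, defining $F\colon\mathscr{C}_{G,X}\to\exit_1(G\backslash X)$ by $F([g]\colon i\to j)=\pi_*(p_{x_i\to g^{-1}.x_j})$, where $p_{x\to x'}$ is the unique morphism in $\exit_1(X)\simeq I$. Since $\pi$ is not a covering map (the $G_i^\ell$ act nontrivially on higher strata), your lifting step is not automatic; it can be justified via the local model $V_i\times C(L_i)\to V_i\times C(G_i^\ell\backslash L_i)$ and the cone topology (a lift on $(0,1]$ into $U_i$ extends to $x_i$ at $t=0$ because any sequence with cone parameter $\to 0$ converges to the apex), but the paper's direction sidesteps this. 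Functoriality of $F$ is then immediate from uniqueness of morphisms in $I$, which is cleaner than your concatenation-of-lifts argument. Finally, the paper proves $F$ is bijective on hom-sets not by ``reading off'' a single 5-term sequence but by writing down a \emph{map} of 5-term exact sequences (the algebraic one for $\{g\mid g.i\leq j\}/G_i^\ell$ mapping via $F$ to the topological one of \Cref{Homsets-SES}) and invoking an extended 5-lemma; this absorbs exactly the basepoint bookkeeping you flagged as the main obstacle.
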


Before tackling the proof, we make some preliminary observations.

\begin{observation}
Note first of all that $G\backslash I$ is indeed a poset. The action is order preserving and we have assumed $I$ to satisfy the ascending chain condition, so we cannot have $g.i<i$ for any $g\in G$, $i\in I$. Hence, setting $[i]\leq [j]$, if $i\leq g. j$ for some $g\in G$ defines a partial order on $G\backslash I$.

\medskip

For any $i\leq j$, we have $G_j^\ell\leq G_i^\ell$ by assumption (i) and assumption (iii), since the neighbourhood $U_i$ is stratified over $I_{\geq i}$ with $j$'th stratum $U_i\cap X_j\neq \emptyset$, and $g\in G_j^\ell$ fixes $U_i\cap X_j$. This together with the fact that ${}^g G_i^\ell = G_{g.i}^\ell$ for all $i\in I$, $g\in G$, implies that the category $\mathscr{C}_{G,X}$ is well-defined.
\exend
\end{observation}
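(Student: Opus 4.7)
The plan is to verify each claim of the observation in turn, with the key ingredients being the ascending chain condition on $I$, the order-preservation of the $G$-action, assumption (i) identifying $G_i^\ell$ as the set of group elements which translate $U_i$ into itself, and the geometric content of assumption (iii).

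First I would establish the preliminary ``no strict descent'' claim that $g.i \not< i$ for all $g \in G$ and $i \in I$. The argument goes by contrapositive: if $g.i < i$, then applying the order-preserving bijection $g^{-1}$ yields $i < g^{-1}.i$, and iterating produces a strictly ascending sequence $i < g^{-1}.i < g^{-2}.i < \cdots$, whose image as a subset of $I$ has no maximal element, contradicting the ACC. With this in hand, reflexivity of the proposed relation on $G \backslash I$ is immediate (take $g = e$), and transitivity follows from $i \leq g.j$ and $j \leq h.k$ by applying the order-preserving bijection $g$ to get $i \leq g.j \leq (gh).k$. Antisymmetry is the only step that uses the ``no strict descent'' fact: if $i \leq g.j$ and $j \leq h.i$, the same trick gives $i \leq g.j \leq (gh).i$, so $(gh).i = i$, which forces $g.j = i$ and hence $[i] = [j]$. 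Independence of the relation on the choice of representatives reduces to a further application of the order-preservation of the action.

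Next I would prove the two ingredients needed for $\mathscr{C}_{G,X}$. For the inclusion $G_j^\ell \leq G_i^\ell$ whenever $i \leq j$, pick any $x \in X_i$ with neighbourhood $U_i \cong V_i \times C(L_i)$. Since $L_i \to I_{>i}$ is surjective, for $j > i$ the stratum $(L_i)_j$ is non-empty, so $U_i \cap X_j \cong V_i \times (L_i)_j \times (0,1)$ is non-empty (and the case $j = i$ is trivial). Any $g \in G_j^\ell$ therefore fixes a point of $U_i \cap X_j$, so $g.U_i \cap U_i \neq \emptyset$, and assumption (i) forces $g \in G_i^\ell$. For the conjugation identity ${}^g G_i^\ell = G_{g.i}^\ell$, one uses equivariance of the stratification map (so $g.X_i = X_{g.i}$) together with the elementary fact that $h$ fixes $X_{g.i}$ pointwise if and only if $g^{-1}hg$ fixes $X_i$ pointwise.

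Finally, well-definedness of $\mathscr{C}_{G,X}$ becomes a short book-keeping check. The right action of $G_i^\ell$ on $\{g \in G : g.i \leq j\}$ is legitimate because $G_i^\ell$ fixes $i$. For composition, given $[g] \in \mathscr{C}_{G,X}(i,j)$ and $[h] \in \mathscr{C}_{G,X}(j,k)$, one has $(hg).i \leq h.j \leq k$, so $[hg]$ lies in $\mathscr{C}_{G,X}(i,k)$. Replacing $g$ by $g h_i$ with $h_i \in G_i^\ell$ and $h$ by $h h_j$ with $h_j \in G_j^\ell$, the product becomes $h h_j g h_i = hg \cdot (g^{-1} h_j g) \cdot h_i$, so it suffices to know that $g^{-1} h_j g \in G_i^\ell$. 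This follows from the conjugation identity $g^{-1} G_{g.i}^\ell g = G_i^\ell$ combined with $h_j \in G_j^\ell \leq G_{g.i}^\ell$, the latter being the previous paragraph applied to $g.i \leq j$. I do not anticipate a serious obstacle; the main subtlety to keep in mind is that the ACC enters only via the $g^{-1}$-trick in the ``no strict descent'' step, and that the composition check must invoke \emph{both} of the ingredients $G_j^\ell \leq G_{g.i}^\ell$ and ${}^g G_i^\ell = G_{g.i}^\ell$ simultaneously.
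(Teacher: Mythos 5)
Your proposal is correct and follows essentially the same route as the paper's (very terse) observation: the ascending chain condition rules out $g.i<i$ via the iterated $g^{-1}$-trick, making the relation on $G\backslash I$ antisymmetric; $G_j^\ell\leq G_i^\ell$ for $i\leq j$ comes from a point of the non-empty stratum $U_i\cap X_j$ (non-empty by the surjectivity of the stratification of $L_i$) being fixed by $G_j^\ell$, so that assumption (i) applies; and well-definedness of composition in $\mathscr{C}_{G,X}$ uses exactly the combination $g^{-1}G_j^\ell g\leq g^{-1}G_{g.i}^\ell g=G_i^\ell$. The only difference is that you spell out the book-keeping the paper leaves implicit, which is fine.
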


\begin{proof}
Let $i\in I$ with image $\hat{\imath}\in G\backslash I$. By assumption (i), the equivalence relations on $U_i$ induced by $G$ and $G_i^\ell$ agree, so the conical neighbourhoods in $X$ satisfying (i)-(iv) descend to $G\backslash X$:
\begin{align*}
\widehat{\phi}_i\colon V_i \times C(G_i^\ell\backslash L_i)\xrightarrow{\ \cong\ }  G_i^\ell\backslash U_i.
\end{align*}
It follows that the quotient $G\backslash X\rightarrow G\backslash I$ is indeed a conically stratified space. By (iv), both $X$ and $G\backslash X$ are metrisably conically stratified. 

\medskip

We now analyse the link spaces and show that the mapping spaces of $\exit_\infty(G\backslash X)$ have contractible components. Let $i,j\in I$ with images $\hat{\imath}, \hat{\jmath}\in G\backslash I$ and suppose $\hat{\imath}\leq \hat{\jmath}$. Write $G_{ij}=\{g\in G\mid g.i\leq j\}$. Let $x_i\in X_i$, $x_j\in X_j$ with images $\hat{x}_i$, $\hat{x}_j$ in $G\backslash X$ and let $\phi_i\colon V_i \times C(L_i)\xrightarrow{\cong} U_i $ be a conical neighbourhood of $x_i$ as in (i)-(iv). Consider the corresponding conical neighbourhood $\widehat{\phi}_i\colon V_i \times C(G_i^\ell\backslash L_i)\xrightarrow{\cong}  G_i^\ell\backslash U_i$ of $\hat{x}_i$. The $\hat{\jmath}$'th stratum of the link space $\mathscr{L}_{\hat{\imath}}:=G_i^\ell\backslash L_i$ is
\begin{align*}
\mathscr{L}_{\hat{\imath}\hat{\jmath}}=G_i^\ell \  \bigg\backslash\ \bigg(\coprod_{\overline{g}\in G_j\backslash G_{ij}}\!\! L_{i(g^{-1}.j)}\bigg)
\end{align*}
where the action of $G_j$ on $G_{ij}$ in the indexing set is given by left multiplication. For $x\in L_{i(g^{-1}.j)}$, and $u\in G_i^\ell$, we have $u.x\in L_{i(u g^{-1}.j)}= L_{i((gu^{-1})^{-1}.j)}$. Hence, the quotient map $G_j\backslash G_{ij}\rightarrow G_j\backslash G_{ij}/G_i^\ell$ induces an isomorphism
\begin{align*}
\pi_0(\mathscr{L}_{\hat{\imath}\hat{\jmath}})\cong G_j\backslash G_{ij}/G_i^\ell.
\end{align*}
Moreover, for a given $[g]\in G_j\backslash G_{ij}/G_i^\ell$, the corresponding component is homeomorphic to the quotient of $L_{i(g^{-1}.j)}$ by the action of $G_i^\ell\cap G_{g^{-1}.j}$. 

\medskip

Condition (i) implies that for all $k\in I$, the action of $G_k/G_k^\ell$ on the weakly contractible space $X_k$ is free and properly discontinuous. Hence, for any $g\in G_{ij}$, so is the action of $G_i^\ell\cap G_{g^{-1}.j}/G_{g^{-1}.j}^\ell$ on the weakly contractible space $L_{i(g^{-1}.j)}$. In fact, for any $\epsilon\in (0,1)$, the inclusion
\begin{align*}
L_{i(g^{-1}.j)}\hookrightarrow X_{g^{-1}.j},\quad l\mapsto \phi_i(x_i,[l,\epsilon]),
\end{align*}
defines a morphism of fibre bundles as below, where $\hat{X}_{\hat{\jmath}}=G\backslash (\coprod_{\overline{g}\in G_j\backslash G} X_{g^{-1}.j})$ is the $\hat{\jmath}$'th stratum of $G \backslash X$.
\begin{center}
\begin{tikzpicture}
\matrix (m) [matrix of math nodes,row sep=2em,column sep=1em]
  {
	(G_i^\ell\cap G_{g^{-1}.j})/G_{g^{-1}.j}^\ell & G_{g^{-1}.j}/G_{g^{-1}.j}^\ell \\
	L_{i(g^{-1}.j)} & X_{g^{-1}.j} \\
	\mathscr{L}_{\hat{\imath}\hat{\jmath}} & \hat{X}_{\hat{\jmath}} \\
  };
  \path[right hook-stealth]
  	(m-1-1) edge (m-2-1)
  	(m-1-2) edge (m-2-2)
  	(m-1-1) edge (m-1-2)
	(m-2-1) edge (m-2-2)
	(m-3-1) edge (m-3-2)
  ;
  \path[->>]
  	(m-2-1) edge (m-3-1)
  	(m-2-2) edge (m-3-2)
  ;
\end{tikzpicture}
\end{center}
It follows that for any choice of basepoint in the $[g]$'th component of $\mathscr{L}_{\hat{\imath}\hat{\jmath}}$, the embedding $\mathscr{L}_{\hat{\imath}\hat{\jmath}} \rightarrow \hat{X}_{\hat{\jmath}}$ induces the inclusion
\begin{align*}
(G_i^\ell\cap G_{g^{-1}.j})/G_{g^{-1}.j}^\ell \hookrightarrow G_{g^{-1}.j}/G_{g^{-1}.j}^\ell
\end{align*}
on $\pi_1$ and the higher homotopy groups of both $\mathscr{L}_{\hat{\imath}\hat{\jmath}}$ and $\hat{X}_{\hat{\jmath}}$ vanish. By \Cref{Homsets-SES}, the mapping space $M(\hat{x}_i,\hat{x}_j)$ of $\exit_\infty(G\backslash X)$ has contractible components. As this holds for any choice of $i<j$ and any points $\hat{x}_i$, $\hat{x}_j$ and since the strata are Eilenberg-Maclane spaces, the exit path $\infty$-category $\exit_\infty(G\backslash X)$ is equivalent to the nerve of its homotopy category $\exit_1(G\backslash X)$.

\medskip

We now define a functor $\mathscr{C}_{G,X}\rightarrow \exit_1(G\backslash X)$ and show that this is an equivalence by examining the exact sequences of \Cref{Homsets-SES} in more detail. Denote by $\pi_*\colon \exit_1(X)\rightarrow \exit_1(G\backslash X)$ the map induced by $\pi$. By \Cref{Exitcat=Poset}, the exit path $\infty$-category of $X$ is equivalent to the nerve of its homotopy category $\exit_1(X)$ which in turn is equivalent to the poset of strata $I$. In view of this, if $x,x'\in X$ are connected by a morphism in $\exit_1(X)$, then this morphism is unique and we denote it by $p_{x\rightarrow x'}$. Choose basepoints $x_i\in X_i$ for all $i\in I$ and define a functor $F\colon \mathscr{C}_{G,X}\rightarrow \exit_1(G\backslash X)$ as follows
\begin{align*}
F(i)=\hat{x}_i=\pi(x_i),\quad \text{and} \quad F([g]\colon i\rightarrow j)=\pi_*(p_{x_i\rightarrow g^{-1}.x_j}).
\end{align*}
This is well-defined, since for $u\in G_i^\ell$, the path $p_{x_i\rightarrow u^{-1}.x_i}$ is the trivial loop at $x_i$.

\medskip

To see that $F$ is fully faithful, let $i\neq j$ and $g_{ij}\in G_{ij}$ (if $G_{ij}=\emptyset$, then the hom-sets on both sides are empty and there is nothing to prove). Set $\gamma_{ij}=F([g_{ij}])\in M(\hat{x}_i,\hat{x}_j)$ and fix, according to \Cref{preamble LES}, a compatible basepoint $l_{ij}\in \mathscr{L}_{\hat{\imath}\hat{\jmath}}$. Then we have a commutative diagram of exact sequences as below, and as this holds for any choice of $g_{ij}$, $F$ is bijective on hom-sets for $i\neq j$ by an extended 5-lemma (see for example \cite[\S 4.1 Exercise 9]{Hatcher}).

\begin{center}
\begin{tikzpicture}
\matrix (m) [matrix of math nodes,row sep=2em,column sep=1em]
  {
    0 & (G_i^\ell\cap G_{g_{ij}^{-1}.j})/G_{g_{ij}^{-1}.j}^\ell & G_j/G_j^\ell & \mathscr{C}_{G,X}(i,j) & G_j\backslash G_{ij}/ G_i^\ell & 0 \\
    0 & \pi_1(\mathscr{L}_{\hat{\imath}\hat{\jmath}},l_{ij}) & \pi_1(\hat{X}_{\hat{\jmath}},\hat{x}_j) & \pi_0(M(\hat{x}_i,\hat{x}_j),\gamma_{ij}) & \pi_0(\mathscr{L}_{\hat{\imath}\hat{\jmath}},l_{ij}) & 0 \\
  };
  \path[-stealth]
	(m-1-1) edge (m-1-2)
	(m-1-2) edge (m-1-3) edge node[right]{$\cong$} (m-2-2)
	(m-1-3) edge node[above]{$-\cdot g_{ij}$} (m-1-4) edge node[right]{$\cong$} (m-2-3)
	(m-1-4) edge (m-1-5) edge node[right]{$F$} (m-2-4)
	(m-1-5) edge (m-1-6) edge node[right]{$\cong$} (m-2-5)
	(m-2-1) edge (m-2-2)
	(m-2-2) edge node[below]{$\phi$} (m-2-3)
	(m-2-3) edge node[below]{$\partial$} (m-2-4)
	(m-2-4) edge (m-2-5)	
	(m-2-5) edge (m-2-6)
  ;
\end{tikzpicture}
\end{center}

For $i=j$, bijectivity on hom-sets follows from the fact that $\hat{X}_{\hat{\imath}}$ is a $K(G_i/G_i^\ell,1)$. The functor is essentially surjective since the strata are path connected, so we have established the desired equivalence. If, in addition to the functor $F$, we choose the equivalence $I\xrightarrow{\sim} \exit_1(X)$ which sends $i$ to $x_i$ and $i<j$ to $p_{x_i\rightarrow x_j}$, then the diagram in the statement of the theorem commutes.
\end{proof}

\begin{remark}
Note that the equivalence $F$ defined in the proof of \Cref{exit path categories and group actions} mirrors the identification of the fundamental group of a (nice) topological space with the group of deck transformations of its universal cover: if $\pi\colon \tilde{X}\rightarrow X$ is a universal cover of a space $X$, where $\tilde{X}$ (and thus $X$) is locally path connected, with basepoints $\tilde{x}\in \tilde{X}$ and $x=\pi(\tilde{x})\in X$, and if $G$ is the group of deck transformations, then we may define a group isomorphism $G\rightarrow \pi_1(X,x)$ by sending $g$ to (the homotopy class of) the loop $\pi\circ p_g$, where $p_g$ is the unique (up to homotopy) path in $\tilde{X}$ from $\tilde{x}$ to $g^{-1}.\tilde{x}$.

\medskip

In this respect, the map $\pi\colon X\rightarrow G\backslash X$ can be interpreted as a stratified universal cover --- see \cite{Woolf} for more about stratified covers (of homotopically stratified sets).
\exend
\end{remark}

We wish to extend \Cref{exit path categories and group actions} to a larger class of stratified spaces. \Cref{exit path categories and group actions II} and \Cref{exit path categories and group actions II with composition} below provide a version of this result applicable to the case when $X$ is not necessarily conically stratified, but does admit neighbourhoods which are ``conical'' with respect to the stratified topological cone defined below. In particular, it will apply to the reductive Borel--Serre compactification as we will see in \Cref{exit category of rbs}. The proofs are a slight modification of that of \Cref{exit path categories and group actions}.

\begin{definition}\label{stratified topological cone}
Let $s\colon Y\rightarrow I$ be a stratified space. The \textit{(open) stratified topological cone} on $Y$ is the stratified space $s^{\triangleleft}\colon C^t(Y)\rightarrow I^{\triangleleft}$ defined as follows: the poset of strata is $I^{\triangleleft}:=I\cup \{-\infty\}$ with $-\infty\leq i$ for all $i\in I$, and $C^t(Y)=(Y\times [0,1))\cup_{Y\times \{0\}}\ast$ is the (usual topological) pushout, and the stratification map is given by $s^\triangleleft(x,t)=s(x)$ and $s^\triangleleft(\ast)=-\infty$.
\defend
\end{definition}

\begin{remark}
As remarked below \Cref{stratified cone}, the stratified topological cone $C^t(Y)$ agrees with the stratified cone $C(Y)$, when $Y$ is compact Hausdorff.
\exend
\end{remark}

The theorem below is a version of \Cref{exit path categories and group actions} for ``topologically conically stratified'' spaces. It applies to a larger class of stratified spaces, but the conclusion is weaker, as it does not provide information about composition in the exit path category. For this we need additional data as described in \Cref{exit path categories and group actions II with composition} and \Cref{remark about weak topology}.

\begin{theorem}\label{exit path categories and group actions II}
Let $X\rightarrow I$ be a stratified space with path connected, weakly contractible strata, with $I$ satisfying the ascending chain condition, and with surjective stratification map. Suppose $G$ is a discrete group acting on $X\rightarrow I$ and let $\pi\colon X\rightarrow G\backslash X$ denote the quotient map. Let for all $i\in I$, $G_i$ denote the stabiliser of $i$ and let $G_i^\ell\leq G_i$ denote the subgroup which fixes $X_i$ pointwise. Suppose that for all $i\in I$ and all $x\in X_i$ there is:
\begin{enumerate}[label=(\roman*)]
\item a $G_i^\ell$-invariant neighbourhood $U_i$ of $x$ in $X$ satisfying
\begin{align*}
\{g\in G\mid g. U_i \cap U_i\neq \emptyset \} =G_i^\ell,
\end{align*}
and such that $V_i=U_i\cap X_i$ is weakly contractible;
\item a stratified space $L_i\rightarrow I_{>i}$ with weakly contractible strata, surjective stratification map, and which is equipped with an action of $G_i^\ell$ such that the quotient $G_i^\ell\backslash L_i$ is compact Hausdorff (where the action on $I_{>i}$ is the restriction of the one of $G_i$);
\item a $G_i^\ell$-equivariant stratified homeomorphism
\begin{align*}
\phi_i\colon V_i\times C^t(L_i)\xrightarrow{\ \cong\ } U_i,
\end{align*}
where $G_i$ acts only on the $L_i$-coordinate of the left hand side, $g.(x,[l,t])=(x,[g.l,t])$, and such that $\phi_i$ restricts to the identity on $V_i\times \{*\}$;
\item and assume additionally that the image $\pi(V_i\cup X_j)$ is metrisable for all $j>i$.
\end{enumerate}
Then $G\backslash X\rightarrow G\backslash I$ is a metrisably conically stratified space, and the exit path $\infty$-category of $G\backslash X$ is equivalent to the nerve of its homotopy category $\exit_1(G\backslash X)$. The exit path $1$-category is equivalent to a category with objects the elements of $I$ and hom-sets
\begin{align*}
\operatorname{Hom}(i,j)=\{g\in G\mid g.i\leq j\}/G_i^\ell,
\end{align*}
where $G_i^\ell$ acts by right multiplication.
\end{theorem}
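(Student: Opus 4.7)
The plan is to closely follow the proof of \Cref{exit path categories and group actions}, adjusting for the fact that the local models are stratified topological cones $C^t(L_i)$ rather than stratified cones $C(L_i)$. First, I would verify that $G\backslash X \to G\backslash I$ is metrisably conically stratified. By condition (i), the equivalence relations on $U_i$ induced by $G$ and by $G_i^\ell$ agree, so $\phi_i$ descends to a stratified homeomorphism
\[
\widehat{\phi}_i \colon V_i \times C^t(G_i^\ell \backslash L_i) \xrightarrow{\ \cong \ } G_i^\ell \backslash U_i.
\]
By condition (ii), $G_i^\ell \backslash L_i$ is compact Hausdorff, so the topological cone coincides with the stratified cone $C(G_i^\ell \backslash L_i)$; hence $G\backslash X$ inherits genuine conical neighbourhoods, and condition (iv) supplies the metrisability needed to apply \Cref{e fibration conically stratified space}. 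This compactness step is the essential use of (ii) and is precisely what prevents us from deducing an analogous statement for $X$ itself.

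Next I would carry out the link-space analysis exactly as in the proof of \Cref{exit path categories and group actions}. Writing $G_{ij}=\{g\in G\mid g.i\leq j\}$, the $\hat{\jmath}$'th stratum of $\mathscr{L}_{\hat{\imath}}=G_i^\ell\backslash L_i$ decomposes as
\[
\mathscr{L}_{\hat{\imath}\hat{\jmath}} \;=\; G_i^\ell \ \bigg\backslash\ \bigg(\coprod_{\overline{g}\in G_j\backslash G_{ij}} L_{i(g^{-1}.j)}\bigg),
\]
so $\pi_0(\mathscr{L}_{\hat{\imath}\hat{\jmath}})\cong G_j\backslash G_{ij}/G_i^\ell$ and each component is a $K((G_i^\ell\cap G_{g^{-1}.j})/G_{g^{-1}.j}^\ell,1)$; the inclusions $L_{i(g^{-1}.j)}\hookrightarrow X_{g^{-1}.j}$ form morphisms of fibre bundles realising the evident subgroup inclusion on $\pi_1$. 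The higher homotopy of $\mathscr{L}_{\hat{\imath}\hat{\jmath}}$ and of $\hat{X}_{\hat{\jmath}}$ vanishes, so \Cref{Homsets-SES} shows that the mapping spaces of $\exit_\infty(G\backslash X)$ have contractible components; combined with the fact that the strata are Eilenberg--MacLane spaces, this gives the desired equivalence with the nerve of $\exit_1(G\backslash X)$.

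To identify the hom-sets, for $i\leq j$ and any choice of representative $g_{ij}\in G_{ij}$ I would compare the topological five-term exact sequence of \Cref{Homsets-SES} with the algebraic sequence
\[
0 \to (G_i^\ell\cap G_{g_{ij}^{-1}.j})/G_{g_{ij}^{-1}.j}^\ell \to G_j/G_j^\ell \xrightarrow{\,-\cdot g_{ij}\,} G_{ij}/G_i^\ell \to G_j\backslash G_{ij}/G_i^\ell \to 0,
\]
plug in the outer identifications established in step two, and apply an extended 5-lemma to read off $\pi_0(M(\hat{x}_i,\hat{x}_j))\cong G_{ij}/G_i^\ell$. The diagonal case $i=j$ is handled separately via $\hat{X}_{\hat{\imath}}\simeq K(G_i/G_i^\ell,1)$.

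The main obstacle, and the reason this theorem is genuinely weaker than \Cref{exit path categories and group actions}, is that $X$ itself is no longer conically stratified in the sense of this paper: without compactness of the $L_i$, the topological cone neighbourhoods in $X$ cannot be upgraded to stratified cones. Thus \Cref{Exitcat=Poset} is unavailable for $X$, and we cannot construct an explicit functor out of a combinatorial source category by lifting morphisms along $\pi\colon X\to G\backslash X$ as was done via the paths $p_{x_i\to g^{-1}.x_j}$ in the proof of \Cref{exit path categories and group actions}. This is precisely why only the hom-sets are identified here, with the description of composition requiring further data and being deferred to the subsequent corollary.
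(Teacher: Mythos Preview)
Your proposal is correct and follows the paper's own proof essentially step for step: descend the conical charts using (i), invoke compactness of $G_i^\ell\backslash L_i$ to identify $C^t$ with $C$, then rerun the link-space and five-term exact sequence analysis from \Cref{exit path categories and group actions} verbatim, treating the $i=j$ case via the $K(G_i/G_i^\ell,1)$ structure of the strata. Your closing paragraph correctly isolates why no functor out of $\mathscr{C}_{G,X}$ is produced here and composition is left to \Cref{exit path categories and group actions II with composition}.
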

\begin{proof}
As in \Cref{exit path categories and group actions}, $G\backslash I$ has a natural partial order. The maps $\phi_i$ descend to the quotient
\begin{align*}
\widehat{\phi}_i\colon V_i \times C^t(G_i^\ell\backslash L_i)\xrightarrow{\ \cong \ }  G_i^\ell\backslash U_i,
\end{align*}
and since $G_i^\ell\backslash L_i$ is assumed to be compact Hausdorff, the stratified topological cone $C^t(G_i^\ell\backslash L_i)$ coincides with $C(G_i^\ell\backslash L_i)$. We thus conclude that $G\backslash X\rightarrow G\backslash I$ is a metrisably conically stratified space. The proof of \Cref{exit path categories and group actions} goes through word for word up until defining the functor $F$, so we conclude that $\exit_\infty (G\backslash X)$ is equivalent to the nerve of its homotopy category.

\medskip

Since the strata of $X$ are path connected, we can fix basepoints $x_i\in X_i$ for all $i$ and any object of $\exit_1(G\backslash X)$ will be equivalent to $\hat{x}_i=\pi(x_i)$ for some $i$. To determine the hom-sets in $\exit_1(G\backslash X)$, we choose a path $\gamma_{ij}\in M(\hat{x}_i,\hat{x}_j)$ for $i\neq j$ (if the mapping space is empty, there is nothing to prove), and the identification of the set of path components as in the proof of \Cref{exit path categories and group actions} follows through word for word. As does the identification for $i=j$, since $\hat{X}_{\hat{\imath}}$ is again a $K(G_i/G_i^\ell,1)$.
\end{proof}

What is lacking in the above situation is a collection of compatible exit paths that allow us to also analyse the composition in $\exit_1(G\backslash X)$. If such a collection exists, then we can fully identify the exit path $1$-category as in the following theorem (see also \Cref{remark about weak topology}).

\begin{proposition}\label{exit path categories and group actions II with composition}
Suppose that in the situation of \Cref{exit path categories and group actions II}, we can choose basepoints $x_i\in X_i$ for all $i\in I$ and paths $\gamma_{ij}^g\colon [0,1]\rightarrow X$ with $\gamma_{ij}^g(0)=x_i$ and $\gamma_{ij}^g(1)=g^{-1}.x_j$ for all $i,j\in I$ and $g\in G$ with $g.i\leq j$. Assume that these paths satisfy the following conditions:
\begin{enumerate}[label=(\roman*)]
\item $\gamma_{ij}^g\in H(X_i\cup X_{g^{-1}.j},X_i)$, when $g.i<j$;
\item $\gamma_{ij}^g\in C([0,1],X_i)$, when $g.i=j$;
\item $\gamma_{ii}^u$ is the constant loop at $x_i$ for all $u\in G_i^\ell$;
\item the concatenations are functorial: for all $i,j,k\in I,$ and $g,h\in G$ with $g.i\leq j$, $h.j\leq k$, we have equalities in $\exit_1(G\backslash X)$:
\begin{align*}
(\pi\circ\gamma_{jk}^h)\ast (\pi\circ\gamma_{ij}^g) = (\pi\circ\gamma_{ik}^{hg}).
\end{align*}
\end{enumerate}
Then there is a functor $F\colon \mathscr{C}_{G,X}\rightarrow \exit_1(G\backslash X)$, $F(i)=\pi(x_i)$, $F([g]\colon i\rightarrow j)=\pi\circ \gamma_{ij}^g$, where $\mathscr{C}_{G,X}$ is the category with objects the elements of $I$ and hom-sets
\begin{align*}
\mathscr{C}_{G,X}(i,j)=\{g\in G\mid g.i\leq j\}/G_i^\ell,
\end{align*}
where $G_i^\ell$ acts by right multiplication, and with composition given by the product in $G$. Moreover, the functor $F$ is an equivalence.
\end{proposition}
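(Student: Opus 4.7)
The plan is to verify, in turn, that $F$ is well-defined on cosets, that it is a functor, and that it is an equivalence of categories. The first two points are immediate from conditions (iii) and (iv). If $g' = gu$ for some $u \in G_i^\ell$, then condition (iv) applied to the composition $i \xrightarrow{u} i \xrightarrow{g} j$ yields
\begin{align*}
(\pi \circ \gamma_{ij}^{g}) \ast (\pi \circ \gamma_{ii}^{u}) = \pi \circ \gamma_{ij}^{gu}
\end{align*}
in $\exit_1(G \backslash X)$, and since $\gamma_{ii}^u$ is the constant loop by (iii), this shows $F([g]) = F([gu])$. Preservation of identities is (iii) with $u = 1$, and preservation of composition is precisely (iv).

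For the equivalence, essential surjectivity is immediate: each stratum $\hat{X}_{\hat{\imath}}$ of $G \backslash X$ is path connected, so every object of $\exit_1(G \backslash X)$ is isomorphic to some $\hat{x}_i$. For bijectivity on hom-sets, the plan is to re-run the extended $5$-lemma argument from the proof of \Cref{exit path categories and group actions}; that argument applies verbatim in the present setting and is, in fact, exactly what identified the sets $\pi_0 M(\hat{x}_i, \hat{x}_j)$ in the proof of \Cref{exit path categories and group actions II}. Concretely, fix representatives $g_{ij}$ for the non-empty double cosets in $G_j \backslash G_{ij} / G_i^\ell$ and take $\gamma_{ij} := F([g_{ij}]) = \pi \circ \gamma_{ij}^{g_{ij}}$ as basepoint of the corresponding component of $M(\hat{x}_i, \hat{x}_j)$. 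One then assembles the same commutative ladder of $5$-term exact sequences as in the proof of \Cref{exit path categories and group actions}, with $F$ as the middle vertical map, and invokes the extended $5$-lemma. The case $i = j$ is handled separately: $F$ sends $[g] \in G_i/G_i^\ell$ to $\pi \circ \gamma_{ii}^g$, which is by construction a representative of $[g]$ under the identification of $\pi_1(\hat{X}_{\hat{\imath}}, \hat{x}_i)$ with the deck group of the universal cover $X_i \to \hat{X}_{\hat{\imath}}$.

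I expect the main obstacle to be the commutativity of the middle square of the ladder, which demands that the boundary map $\partial$ of \Cref{Homsets-LES} (concatenation with $\gamma_{ij}$) correspond under $F^{-1}$ to right multiplication by $g_{ij}$ in $\mathscr{C}_{G,X}$. Representing an element $g \in G_j$ in $\pi_1(\hat{X}_{\hat{\jmath}}, \hat{x}_j)$ by the loop $\pi \circ \gamma_{jj}^g$ -- which is a legitimate choice by (ii) together with the covering space identification $\pi_1(\hat{X}_{\hat{\jmath}}, \hat{x}_j) \cong G_j/G_j^\ell$ -- this commutativity reduces to the identity
\begin{align*}
(\pi \circ \gamma_{jj}^g) \ast (\pi \circ \gamma_{ij}^{g_{ij}}) = \pi \circ \gamma_{ij}^{g g_{ij}}
\end{align*}
in $\exit_1(G \backslash X)$, which is exactly condition (iv) applied to $(g, g_{ij})$ along $i \xrightarrow{g_{ij}} j \xrightarrow{g} j$. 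Once this is in place, the remaining squares of the ladder commute for reasons already contained in the proof of \Cref{exit path categories and group actions II}, and the extended $5$-lemma completes the argument.
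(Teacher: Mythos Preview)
Your proposal is correct and follows essentially the same approach as the paper. The paper's own proof is extremely terse---it merely observes that conditions (i)--(iv) make $F$ well-defined and then says ``It fits into the proof of \Cref{exit path categories and group actions II} where it is seen to be an equivalence''---whereas you have correctly unpacked what this entails: well-definedness from (iii)+(iv), functoriality from (iv), essential surjectivity from connectedness of strata, and fully faithfulness via the extended 5-lemma ladder from the proof of \Cref{exit path categories and group actions}, with condition (iv) supplying exactly the commutativity of the middle square.
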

\begin{proof}
As in \Cref{exit path categories and group actions}, the category $\mathscr{C}_{G,X}$ is well-defined, and conditions (i)-(iv) imply that $F$ is well-defined. It fits into the proof of \Cref{exit path categories and group actions II} where it is seen to be an equivalence.
\end{proof}

\begin{remark}\label{remark about weak topology}
In the situation of \Cref{exit path categories and group actions II}, one can weaken the topology of $X$ in order to obtain a conically stratified space with the same quotient space $G\backslash X$. This is similar to the trick used by Milnor to construct universal bundles in \cite{Milnor56}. More specifically, let $X^w$ denote the space whose underlying set is that of $X$ equipped with the coarsest topology such that
\begin{itemize}[label=$\ast$]
\item the stratification map $X^w\rightarrow I$ is continuous;
\item the map $X^w\rightarrow G\backslash X$ is a quotient map;
\item the inclusions $X_i\hookrightarrow X^w$, $i\in I$, are embeddings.
\end{itemize}

Suppose we have a stratified space $L\rightarrow I$ equipped with an action of $G$ and let $G$ act on the stratified topological cone $C^t(L)\rightarrow I^\triangleleft$ by acting on the $L$-coordinate of the cone and fixing the apex. If $G\backslash L$ is compact Hausdorff, then $C^t(L)^w\cong C(L^w)$. Hence, if $X$ admits neighbourhoods as in \Cref{exit path categories and group actions II}, then $X^w$ is a conically stratified space with quotient space $G\backslash X$ and we are almost in the situation of \Cref{exit path categories and group actions}. However, the metrisability conditions that need to be verified for \Cref{exit path categories and group actions} to apply, mean that we cannot make a reasonable general statement using $X^w$ as an intermediary construction. At least not with the tools at hand.

\medskip

In particular, we would expect that in most concrete cases of \Cref{exit path categories and group actions II}, the composition rule in the exit path $1$-category $\exit_1(G\backslash X)$ does coincide with the natural one of $\mathscr{C}_{G,X}$ given by multiplication in $G$. This composition rule is well-defined and we do not have any concrete counter examples to such a claim. The problem is solely that in the situation of \Cref{exit path categories and group actions II}, we are unable to make an explicit comparison between the two composition rules --- and of course we cannot rule out that our lack of control of the finer technicalities of the situation may result in a counter example.
\exend
\end{remark}

\begin{observation}\label{restricting to full subcategories}
Suppose we are in the situation of either \Cref{exit path categories and group actions} or \Cref{exit path categories and group actions II with composition} and consider for some subset $J\subseteq I$, the image of the union $\bigcup_{j\in J}X_j$ under the quotient map $X\rightarrow G\backslash X$:
\begin{align*}
\bigcup_{j\in J}\hat{X}_{\hat{\jmath}}\subseteq G\backslash X.
\end{align*}
Then the exit path $\infty$-category of $\bigcup_{j\in J}\hat{X}_{\hat{\jmath}}$ is also equivalent to the nerve of its homotopy category and the functor $\mathscr{C}_{G,X}\rightarrow \exit_1(G\backslash X)$ defined in the proof of \Cref{exit path categories and group actions} or \Cref{exit path categories and group actions II with composition} restricts to an equivalence
\begin{align*}
\mathscr{C}_{G,X}(J)\longrightarrow \exit_1(\textstyle\bigcup_{j\in J}\hat{X}_{\hat{\jmath}}).
\end{align*}
of the full subcategory $\mathscr{C}_{G,X}(J)$ spanned by the elements of $J$ and the exit path $1$-category of $\bigcup_{j\in J}\hat{X}_{\hat{\jmath}}$.
\exend
\end{observation}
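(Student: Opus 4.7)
The plan is to verify that the subspace $Y := \bigcup_{j\in J}\hat{X}_{\hat\jmath}$, equipped with the induced stratification over $G\backslash J\subseteq G\backslash I$, satisfies all the local hypotheses used to prove \Cref{exit path categories and group actions} and \Cref{exit path categories and group actions II with composition}, so that those arguments apply verbatim to $Y$. Since passing from $J$ to its $G$-saturation $GJ$ changes neither the space $Y$ nor the full subcategory $\mathscr{C}_{G,X}(J)$ up to equivalence, I would first replace $J$ by $GJ$ and assume it is $G$-invariant. Then $G\backslash J$ is a subposet of $G\backslash I$ satisfying the ascending chain condition, and the upward unions in $Y$ are intersections with $Y$ of the upward unions in $G\backslash X$, so $Y\to G\backslash J$ is a continuous stratification.

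Next I would show that $Y$ is metrisably conically stratified by restricting the conical neighbourhoods already constructed for $G\backslash X$ in the proofs of the ambient theorems. For $i\in J$ and $\hat{x}_i\in\hat{X}_{\hat\imath}$, the descended neighbourhood $\widehat\phi_i\colon V_i\times C(G_i^\ell\backslash L_i)\xrightarrow{\cong}G_i^\ell\backslash U_i$ restricts to a stratified homeomorphism
\[
V_i\times C(G_i^\ell\backslash L_i^J)\xrightarrow{\ \cong\ }(G_i^\ell\backslash U_i)\cap Y,
\]
where $L_i^J\subseteq L_i$ is the union of strata indexed by $(I_{>i})\cap J$; this reduces to a direct check that the subspace topology on $\{*\}\cup(L_i^J\times(0,1))\subseteq C(L_i)$ coincides with the stratified cone topology on $L_i^J$. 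The strata of $L_i^J$ remain weakly contractible, and the metrisability of $V_i\cup\hat{X}_{\hat\jmath}$ for $j>i$ in $J$ is inherited from the original hypothesis. Crucially, for $\hat\jmath\in G\backslash J$ the $\hat\jmath$-stratum of $G_i^\ell\backslash L_i^J$ coincides with the original $\mathscr{L}_{\hat\imath\hat\jmath}$, because $G$-invariance of $J$ forces every summand $L_{i(g^{-1}.j)}$ appearing in the formula for $\mathscr{L}_{\hat\imath\hat\jmath}$ from the proof of \Cref{exit path categories and group actions} to lie in $L_i^J$.

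With these ingredients in place, the arguments of \Cref{exit path categories and group actions} and \Cref{exit path categories and group actions II with composition} transfer to $Y$ verbatim: the hypotheses of \Cref{Homsets-SES} at a pair $\hat{x}_i,\hat{x}_j\in Y$ involve only the unchanged strata $\hat{X}_{\hat\jmath}$ and link strata $\mathscr{L}_{\hat\imath\hat\jmath}$, so the mapping spaces in $\exit_\infty(Y)$ have contractible components and $\exit_\infty(Y)$ is equivalent to the nerve of its homotopy category. The functor $F$ constructed in the proofs restricts to $\mathscr{C}_{G,X}(J)$: it sends objects $i\in J$ to $\hat{x}_i\in Y$, and a representing exit $1$-simplex $\pi\circ\gamma_{ij}^g\colon\Delta^1\to G\backslash X$ between $\hat{x}_i$ and $\hat{x}_j$ has image in $\hat{X}_{\hat\imath}\cup\hat{X}_{\hat\jmath}\subseteq Y$. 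The resulting functor $F^J\colon\mathscr{C}_{G,X}(J)\to\exit_1(Y)$ is essentially surjective by path-connectedness of strata and fully faithful by the same five-term exact sequence plus extended $5$-lemma argument used in the original proof. The main obstacle I expect is just the bookkeeping around $J$ not being upward closed in $I$; this is handled entirely by the observation above that the $\hat\jmath$-link stratum of $Y$ is unchanged for every $\hat\jmath\in G\backslash J$, so no homotopical input to the exact sequence is altered.
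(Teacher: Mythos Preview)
Your proposal is correct and, in fact, substantially more detailed than what the paper offers: the paper states this as an Observation with no proof, treating it as immediate from the proofs of \Cref{exit path categories and group actions} and \Cref{exit path categories and group actions II with composition}. Your plan of reducing to $G$-invariant $J$, verifying that the restricted cone $\{*\}\cup(L_i^J\times(0,1))\subseteq C(L_i)$ carries the stratified cone topology of $C(L_i^J)$, noting that the relevant link strata $\mathscr{L}_{\hat\imath\hat\jmath}$ are unchanged, and then rerunning the five-term exact sequence argument is exactly the verification one would carry out to justify the paper's implicit claim.
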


The following corollary recovers the result of \cite[Theorem 1.7]{ChenLooijenga} in the cases to which their theorem also applies (see also \Cref{compare Chen-Looijenga}).

\begin{corollary}\label{homotopy type and group actions}
If in the situation of \Cref{exit path categories and group actions} or \Cref{exit path categories and group actions II with composition}, the space $G\backslash X$ is paracompact and locally contractible, then it is weakly homotopy equivalent to the geometric realisation of the category $\mathscr{C}_{G,X}$. Moreover, this equivalence is functorial with respect to inclusions of unions of strata in the sense that the restriction functors of \Cref{restricting to full subcategories} also induce weak homotopy equivalences.
\end{corollary}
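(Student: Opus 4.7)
The strategy is to combine three ingredients: the identification of the exit path $\infty$-category as a nerve (from Theorems \ref{exit path categories and group actions} and \ref{exit path categories and group actions II with composition}), the general weak equivalence $|\exit_\infty(Y)| \simeq |\operatorname{Sing}(Y)|$ from \Cref{exit and sing weak htpy equiv}, and naturality under the restriction of \Cref{restricting to full subcategories}.

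First I would verify the hypotheses needed to apply \Cref{exit and sing weak htpy equiv} to $G\backslash X$. Under the assumptions of either \Cref{exit path categories and group actions} or \Cref{exit path categories and group actions II}, the quotient $G\backslash X \to G\backslash I$ is already shown to be metrisably conically stratified. The quotient poset $G\backslash I$ inherits the ascending chain condition from $I$: the $G$-action on $I$ is order preserving, and by the ascending chain condition on $I$ we cannot have $g\cdot i > i$ for any $g \in G$ and $i \in I$, so any strictly increasing chain in $G\backslash I$ lifts to one in $I$. Combined with the assumed paracompactness and local contractibility of $G\backslash X$, \Cref{exit and sing weak htpy equiv} yields a weak homotopy equivalence $|\exit_\infty(G\backslash X)| \xrightarrow{\sim} |\operatorname{Sing}(G\backslash X)|$, and the latter space is weakly equivalent to $G\backslash X$ itself.

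Next I would invoke Theorems \ref{exit path categories and group actions} and \ref{exit path categories and group actions II with composition}, which give equivalences of $\infty$-categories
\begin{align*}
\exit_\infty(G\backslash X) \simeq N_\bullet \exit_1(G\backslash X) \simeq N_\bullet \mathscr{C}_{G,X}.
\end{align*}
Taking geometric realisations and composing with the weak equivalence above produces the desired chain
\begin{align*}
G\backslash X \xleftarrow{\sim} |\operatorname{Sing}(G\backslash X)| \xleftarrow{\sim} |\exit_\infty(G\backslash X)| \xrightarrow{\sim} |\mathscr{C}_{G,X}|.
\end{align*}

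For the functoriality statement, I would observe that all maps involved are natural in the inclusion of unions of strata. Concretely, for a subset $J \subseteq I$ as in \Cref{restricting to full subcategories} (assuming the corresponding union is paracompact and locally contractible), the same argument applied to $\bigcup_{j \in J} \hat{X}_{\hat{\jmath}}$ yields a weak equivalence to $|\mathscr{C}_{G,X}(J)|$. The square of weak equivalences commutes because the inclusion $\exit_\infty(-) \hookrightarrow \operatorname{Sing}(-)$ is natural in stratified maps, and the equivalence $\mathscr{C}_{G,X}(J) \xrightarrow{\sim} \exit_1(\bigcup_{j\in J} \hat{X}_{\hat{\jmath}})$ of \Cref{restricting to full subcategories} is by construction the restriction of the functor $\mathscr{C}_{G,X} \xrightarrow{\sim} \exit_1(G\backslash X)$ defined via the chosen basepoints and paths, and so commutes with the inclusion of subcategories on the nose. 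The only potentially delicate point is bookkeeping to ensure that the chain of weak equivalences above is itself natural rather than only pointwise-defined, but since \Cref{exit and sing weak htpy equiv} is induced by a genuine map of simplicial sets, this naturality is automatic.
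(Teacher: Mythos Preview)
Your proof is correct and follows essentially the same approach as the paper: both construct the same zig-zag of weak equivalences $N(\mathscr{C}_{G,X}) \to N(\exit_1(G\backslash X)) \leftarrow \exit_\infty(G\backslash X) \to \operatorname{Sing}(G\backslash X)$ and invoke \Cref{restricting to full subcategories} for functoriality. Your explicit verification that $G\backslash I$ inherits the ascending chain condition is a nice extra bit of care that the paper leaves implicit.
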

\begin{proof}
We have a zig-zag of functors of $\infty$-categories,
\begin{align*}
N(\mathscr{C}_{G,X})\ \longrightarrow\ N(\exit_1(G\backslash X))\ \longleftarrow\ \exit_\infty(G\backslash X)\ \longrightarrow\ \operatorname{Sing}(G\backslash X),
\end{align*}
where the first is given by the equivalence $F\colon \mathscr{C}_{G, X}\rightarrow \exit_1(G\backslash X)$, the second is the canonical map from $\exit_\infty(G\backslash X)$ to the nerve of its homotopy category, and the third is the weak homotopy equivalence of \Cref{exit and sing weak htpy equiv}. Functoriality with respect to inclusions of unions of strata follows directly from \Cref{restricting to full subcategories}.
\end{proof}

We also have the following corollary, which is the case when all the subgroups $G_i^\ell$ are trivial.

\begin{corollary}\label{exit path categories and group actions with trivial stabilisers}
Let $X\rightarrow I$ be a metrisably conically stratified space with $I$ satisfying the ascending chain condition. Suppose the strata of $X$ are path connected and weakly contractible and that we can choose conical neighbourhoods with weakly contractible strata. Let $G$ be a discrete group acting on $X\rightarrow I$ with metrisable quotient space $G\backslash X$. If the action of $G$ on $X$ is free and properly discontinuous, then $G\backslash X\rightarrow G\backslash I$ is a metrisably conically stratified space. Moreover, the exit path category of $G\backslash X$ is equivalent to the nerve of its homotopy category which is equivalent to the category $\mathscr{C}_{G,X}$ with objects the elements of $I$, hom-sets
\begin{align*}
\mathscr{C}_{G,X}(i,j)=\{g\in G\mid g.i\leq j\},
\end{align*}
and composition given by the product in $G$. The equivalence can be chosen such that it is functorial with respect to inclusions of unions of strata. If moreover, $G\backslash X$ is locally contractible, then it is weakly homotopy equivalent to the geometric realisation $|\mathscr{C}_{G,X}|$, and this weak homotopy equivalence is also functorial with respect to inclusions of unions of strata.
\end{corollary}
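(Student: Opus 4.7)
The plan is to reduce the statement to \Cref{exit path categories and group actions}, \Cref{restricting to full subcategories}, and \Cref{homotopy type and group actions} by verifying the hypotheses of the first in the free, properly discontinuous setting. The first observation is that freeness forces $G_i^\ell=\{e\}$ for every $i\in I$, since $G_i^\ell$ fixes the non-empty stratum $X_i$ pointwise. Consequently, the $G_i^\ell$-equivariance requirements in hypotheses (ii) and (iii) of \Cref{exit path categories and group actions} become vacuous, and any link space $L$ coming from an existing conical neighbourhood automatically satisfies them with the trivial $G_i^\ell$-action.

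The main step is to verify hypothesis (i). Fix $x\in X_i$ and choose, by assumption, a conical neighbourhood $\phi\colon V\times C(L)\xrightarrow{\cong} U$ of $x$ with $V=U\cap X_i$ weakly contractible and all strata of $L$ weakly contractible. By proper discontinuity, we may first shrink $U$ so that the set $\{g\in G : g.U\cap U\neq\emptyset\}$ is finite, say $\{e,g_2,\ldots,g_n\}$. Since the action is free, $g_k.x\neq x$ for each $k\geq 2$, and I would then shrink further in the cone coordinate, replacing $C(L)$ with the stratified-homeomorphic cone on $L\times(0,\epsilon)$ for small $\epsilon$, and if necessary shrinking $V$ inside $X_i$ to a weakly contractible neighbourhood of $x$, to arrange that $\{g\in G: g.U\cap U\neq\emptyset\}=\{e\}=G_i^\ell$. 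This preserves both the conical structure and the weak contractibility of $V$. Hypothesis (iv) is automatic: $V\cup X_j$ is metrisable because $X$ is metrisably conically stratified, and $\pi(V\cup X_j)$ is metrisable as a subspace of the metrisable quotient $G\backslash X$.

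Having verified all the hypotheses, \Cref{exit path categories and group actions} yields that $G\backslash X\to G\backslash I$ is metrisably conically stratified, that $\exit_\infty(G\backslash X)$ is equivalent to the nerve of $\exit_1(G\backslash X)$, and that $\exit_1(G\backslash X)\simeq \mathscr{C}_{G,X}$, where the hom-sets take the simplified form $\{g\in G: g.i\leq j\}$ because each $G_i^\ell$ is trivial. The functoriality with respect to inclusions of unions of strata is then a direct application of \Cref{restricting to full subcategories}. Finally, under the additional assumption that $G\backslash X$ is locally contractible, \Cref{homotopy type and group actions} produces the weak homotopy equivalence $G\backslash X\simeq |\mathscr{C}_{G,X}|$ together with its functoriality in the same way.

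The only real obstacle is the refinement in hypothesis (i): one must shrink the chosen conical neighbourhood so that it is disjoint from all of its non-trivial translates while retaining both the conical product structure and the weakly contractible cross-section with $X_i$. Shrinking in the cone direction, which preserves $V$ and the stratified product decomposition, together with proper discontinuity and freeness, handles this cleanly; no further ingredients are required.
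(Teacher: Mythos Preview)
Your approach is exactly the intended one: the paper presents this corollary without proof as the special case of \Cref{exit path categories and group actions} in which every $G_i^\ell$ is trivial, and your verification of hypotheses (i)--(iv) via freeness and proper discontinuity is the natural way to spell this out. Two small points: first, the truncated cone $\{*\}\cup L\times(0,\epsilon)\subset C(L)$ is stratified-homeomorphic to $C(L)$ itself (by rescaling the cone coordinate), not to the cone on $L\times(0,\epsilon)$ as you write; second, the step ``shrinking $V$ inside $X_i$ to a weakly contractible neighbourhood of $x$'' tacitly assumes such arbitrarily small weakly contractible neighbourhoods exist, which is not literally guaranteed by the stated hypothesis but is the intended reading of ``we can choose conical neighbourhoods with weakly contractible strata'' (and holds in the applications, e.g.\ manifolds with corners).
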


\begin{remark}\label{compare Chen-Looijenga}
The results of this section should be compared with \cite[Theorem 1.7]{ChenLooijenga}. We rephrase and slightly strengthen their result in certain situations. The settings differ a great deal; for one we refrain from talking about stacks, orbifolds and orbispaces in this paper, and the conditions on the space $X$ in Theorems \ref{exit path categories and group actions} , \ref{exit path categories and group actions II} and \ref{exit path categories and group actions II with composition} are much more restrictive. In particular, we are only able to compare with (a subset of) the situations in which \cite[Theorem 1.7]{ChenLooijenga} gives an actual homotopy equivalence and not the more general \textit{stacky} homotopy equivalence. In the comparable cases, however, we strengthen their result by determining not just a homotopy type which is functorial with respect to inclusions of unions of strata (\Cref{homotopy type and group actions}), but the exit path $\infty$-category. Thus our result completely captures constructible sheaves on the stratified spaces in question, and as a corollary determines the homotopy type, while the result of Chen--Looijenga determines the homotopy type but fails to recover the constructible sheaves (see also a more concrete example and explanation of the difference below). We also believe that the conditions needed for our result to apply may be somewhat easier to check, since they are local in nature, whereas the theorem of Chen--Looijenga requires compatible well-behaved neighbourhoods of each stratum.

\medskip

The difference between the exit path $\infty$-category and a homotopy equivalence which is functorial with respect to inclusions of unions of strata can be summed up as the difference between considering a space and its suspension. The mapping spaces of the exit path $\infty$-category keep track of the glueing data and we may lose this information by only considering the homotopy type. Consider the following example: let $X$ be a finite CW-complex with $\pi_1(X)\neq 0$ and trivial homology, e.g.~the $2$-skeleton of the Poincaré homology sphere. Then the (unreduced) suspension $SX$ is contractible, since it is simply connected and $SX\rightarrow *$ is a homology isomorphism. Stratify $SX$ over $\{0<1\}$ by sending $[x,0]$ to $0$ and $[x,t]$ to $1$ for $t>0$ --- this is conically stratified, as $X$ is compact Hausdorff. The map $SX\rightarrow |[1]|\cong [0,1]$, $[x,t]\mapsto t$, is a homotopy equivalence which is functorial with respect to inclusions of unions of strata. However, $X$ is a link space of the point $[x,0]$ in $SX$, so an application of \Cref{Homsets-LES} shows that the exit path $\infty$-category of $SX$ is equivalent to the topological category $\mathscr{C}$ with objects $0$ and $1$ and morphism spaces $\mathscr{C}(i,i)\simeq *$, $i=0,1$, and $\mathscr{C}(0,1)\simeq X$.
\exend
\end{remark}

\section{The reductive Borel--Serre compactification}\label{rbs}

We introduce the Borel--Serre and reductive Borel--Serre compactifications of a locally symmetric space $\Gamma\backslash X$ associated with an arithmetic group $\Gamma$. Zucker's original definition of the reductive Borel--Serre compactification is as a quotient of the Borel--Serre compactification. Our construction is slightly different and follows the presentation of \cite{JiMurtySaperScherk}: it will be the quotient of a suitable stratified space under an action of $\Gamma$ allowing us to apply the calculational tools developed in the previous section. We refer the reader to the original papers \cite{BorelSerre} and \cite{Zucker82} and also to \cite{GoreskyHarderMacPherson}, \cite{BorelJi} and \cite{JiMurtySaperScherk} for more details. In \Cref{exit category of rbs} we interpret these spaces as poset-stratified spaces and determine their exit path $\infty$-categories, proving the main result of this paper.

\subsection{Locally symmetric spaces and compactifications}\label{rbs definition}

Let $\mathbf{G}$ be a connected reductive linear algebraic group defined over $\Q$ and let $\Gamma\leq \mathbf{G}(\Q)$ be an arithmetic group. We will assume that the centre of $\mathbf{G}$ is anisotropic over $\Q$, i.e.~of $\Q$-rank $0$ (see \cite[\S 3]{GoreskyHarderMacPherson}, \cite[\S 5.0]{BorelSerre} for details on why we can reduce to this case). Choose a maximal compact subgroup $K\leq G=\mathbf{G}(\R)$ and consider the symmetric space $X\cong G/K$ of maximal compact subgroups of $G$ and denote by $x_0\in X$ the basepoint corresponding to $K$. The space $X$ is homeomorphic to Euclidean space and the action of $\Gamma$ by left multiplication is properly discontinuous. If $\Gamma$ is torsion-free then the quotient $\Gamma\backslash X$ is a locally symmetric space which by the Godement compactness criterion is compact if and only if $\mathbf{G}$ has $\Q$-rank $0$ (\cite[III.2.15]{BorelJi}). From now on we assume that $\mathbf{G}$ has positive $\Q$-rank. We will need to assume that $\Gamma$ is neat later on: recall that a subgroup $H\subset \mathbf{G}(\Q)$ is \textit{neat}, if for some (hence any) faithful representation $\rho\colon \mathbf{G}\rightarrow\mathbf{GL}_n$ over $\Q$, the subgroup of $\C^\times$ generated by the eigenvalues of $\rho(h)$ is torsion-free for all $h\in H$. If $H$ is neat, then it is torsion-free. Any arithmetic group contains a finite index neat subgroup (\cite[\S 17.6]{Borel69}).

\medskip

Given a rational parabolic subgroup $\mathbf{P}\leq \mathbf{G}$, denote by $\mathbf{N_P}\leq \mathbf{P}$ the unipotent radical of $\mathbf{P}$ and by $\mathbf{L_P}=\mathbf{P}/\mathbf{N_P}$ the Levi quotient. Let $\mathbf{S_P}$ denote the maximal $\Q$-split torus in the centre of $\mathbf{L_P}$, and let $\mathbf{M_P}=\bigcap_\chi \ker\chi^2$ denote the intersection of the kernels of the squares of all rationally defined characters on $\mathbf{L_P}$. Write $A_{\mathbf{P}}=\mathbf{S_P}(\R)^0$ for the identity component of the real points of $\mathbf{S_P}$, and $M_{\mathbf{P}}=\mathbf{M_P}(\R)$. Then the real points $L_{\mathbf{P}}=\mathbf{L_P}(\R)$ has a direct sum decomposition $L_{\mathbf{P}}=A_{\mathbf{P}}\times M_{\mathbf{P}}$, which induces the \textit{rational Langlands decomposition} of $P=\mathbf{P}(\R)$:
\begin{align*}
P\cong  N_{\mathbf{P}}\times A_{\mathbf{P},x_0} \times M_{\mathbf{P},x_0},
\end{align*} 
where $N_{\mathbf{P}}=\mathbf{N_P}(\R)$, and $A_{\mathbf{P},x_0}$ and $M_{\mathbf{P},x_0}$ are the lifts of $A_{\mathbf{P}}$ and $M_{\mathbf{P}}$ to the unique Levi subgroup which is stable under the extended Cartan involution of $G$ associated with $K$. Since $G=PK$, $P$ acts transitively on $X$, and so the Langlands decomposition of $P$ gives rise to the \textit{horospherical decomposition} of $X$
\begin{align*}
X\cong N_{\mathbf{P}}\times A_{\mathbf{P},x_0}\times X_{\mathbf{P},x_0},
\end{align*}
where $X_{\mathbf{P},x_0}$ is the symmetric space
\begin{align*}
X_{\mathbf{P},x_0}=M_{\mathbf{P},x_0}/(M_{\mathbf{P},x_0}\cap K)\cong L_{\mathbf{P}}/A_{\mathbf{P}}K_{\mathbf{P}},
\end{align*}
where $K_{\mathbf{P}}\leq M_{\mathbf{P}}$ corresponds to $M_{\mathbf{P},x_0}\cap K$. We define the \textit{geodesic action} of $A_{\mathbf{P}}$ on $X$ by identifying $A_{\mathbf{P}}$ with the lift $A_{\mathbf{P},x_0}$ and letting it act on $X$ by the translation action on the $A_{\mathbf{P},x_0}$-factor of the horospherical decomposition. This action turns out to be independent of the basepoint $x_0$ (\cite[\S 3.2]{BorelSerre}). From now on we omit the reference to the basepoint $x_0$.

\medskip

For every rational parabolic subgroup $\mathbf{P}$ of $\mathbf{G}$ define the \textit{Borel--Serre boundary component} as
\begin{align*}
e(\mathbf{P})=N_{\mathbf{P}}\times X_{\mathbf{P}} \cong X/A_{\mathbf{P}}
\end{align*}
according to the decompositions above. As a set, we define the \textit{partial Borel--Serre compactification} as the disjoint union of the Borel--Serre boundary components for all rational parabolic subgroups of $\mathbf{G}$.
\begin{align*}
\XBS := \coprod_{\mathbf{P}} e(\mathbf{P}),
\end{align*}
where we also interpret $\mathbf{G}$ as a parabolic subgroup of itself with $e(\mathbf{G})=X_{\mathbf{G}}=X$.

\medskip

To define the topology on $\XBS$, we define corners associated with the rational parabolic subgroups which simultaneously equip $\XBS$ with the structure of a manifold with corners. Let $\mathbf{P}$ be a rational parabolic subgroup of $\mathbf{G}$ and let $\Delta_{\mathbf{P}}$ denote the simple roots of $P$ with respect to $A_{\mathbf{P}}$ (\cite[III.1.13]{BorelJi}). We denote the value of a character $\alpha$ on $a\in A_{\mathbf{P}}$ by $a^\alpha$ and make the identification
\begin{align*}
A_{\mathbf{P}}\xrightarrow{\cong} (\R_{>0})^{\Delta_{\mathbf{P}}},\quad a\mapsto (a^{-\alpha})_{\alpha\in \Delta_{\mathbf{P}}}.
\end{align*}
The closure of $A_{\mathbf{P}}$ in $\R^{\Delta_{\mathbf{P}}}$ is $(\R_{\geq 0})^{\Delta_{\mathbf{P}}}$ and we denote this by $\overline{A_{\mathbf{P}}}$ --- for clarity, we use the character notation to denote the coordinates, i.e. the $\alpha$'th coordinate of $a\in \overline{A_{\mathbf{P}}}$ is denoted $a^{-\alpha}$. There is an inclusion preserving bijective correspondence between rational parabolic subgroups containing $\mathbf{P}$ and subsets of $\Delta_{\mathbf{P}}$, with $P$ corresponding $\emptyset$ and $G$ to $\Delta_{\mathbf{P}}$ (\cite[III.1.15]{BorelJi}). Denote by $\Delta_{\mathbf{P}}^{\mathbf{Q}}\subseteq \Delta_{\mathbf{P}}$ the subset corresponding to $\mathbf{Q}\geq \mathbf{P}$ and define a point $o_{\mathbf{Q}}\in \overline{A_{\mathbf{P}}}$ with coordinates $o_{\mathbf{Q}}^{-\alpha}=1$ for all $\alpha\in \Delta_{\mathbf{P}}^{\mathbf{Q}}$ and $o_{\mathbf{Q}}^{-\alpha}=0$ for $\alpha\notin \Delta_{\mathbf{P}}^{\mathbf{Q}}$. Note that $o_{\mathbf{P}}$ corresponds to the origin in $\R^{\Delta_{\mathbf{P}}}$ and that $o_{\mathbf{G}}$ corresponds to the point all of whose coordinates are $1$. For every rational parabolic subgroup $\mathbf{P}$ of $\mathbf{G}$ define the \textit{corner associated with $\mathbf{P}$} as
\begin{align*}
X(\mathbf{P}):=\overline{A_{\mathbf{P}}}\times e(\mathbf{P})=\overline{A_{\mathbf{P}}}\times N_{\mathbf{P}}\times X_{\mathbf{P}},
\end{align*}
and for $\mathbf{Q}\geq \mathbf{P}$, identify $e(\mathbf{Q})$ with $(A_{\mathbf{P}}\cdot o_{\mathbf{Q}})\times N_{\mathbf{P}}\times X_{\mathbf{P}}$ (\cite[III.5.6]{BorelJi}), where $A_{\mathbf{P}}\cdot o_{\mathbf{Q}}$ is the orbit of $o_{\mathbf{Q}}$ under the action of $A_{\mathbf{P}}$ on $\overline{A_{\mathbf{P}}}$ by coordinatewise multiplication, i.e. the subspace spanned by the coordinates $\Delta_{\mathbf{P}}^{\mathbf{Q}}\subseteq \Delta_{\mathbf{P}}$ (all other coordinates are zero). In particular, we identify $e(\mathbf{P})$ with $\{o_{\mathbf{P}}\}\times N_{\mathbf{P}}\times X_{\mathbf{P}}$ and $X$ with $A_{\mathbf{P}}\times N_{\mathbf{P}}\times X_{\mathbf{P}}$. Equip $\XBS$ with the finest topology such that for all rational parabolic subgroups $\mathbf{P}$ of $\mathbf{G}$, the inclusion
\begin{align*}
X(\mathbf{P})\cong\coprod_{\mathbf{Q}\geq \mathbf{P}} e(\mathbf{Q})\ \longrightarrow\ \coprod_{\mathbf{Q}} e(\mathbf{Q})=\XBS
\end{align*}
is an open embedding. This equips $\XBS$ with the structure of a real analytic manifold with corners which is Hausdorff and paracompact (\cite[Theorem 7.8]{BorelSerre}). The original space $X=X(\mathbf{G})$ is identified with the interior of $\XBS$; in particular, the embedding $X \hookrightarrow \XBS$ is a homotopy equivalence, so $\XBS$ is also contractible.

\medskip

For each rational parabolic subgroup $\mathbf{P}$, the action of $P$ on $X$ descends to an action on the boundary component $e(\mathbf{P})$. The action of $\mathbf{G}(\Q)$ on $X$ can be extended to an action on $\XBS$ which permutes the boundary components, $g.e(\mathbf{P})=e({}^g\mathbf{P})$, and which restricts to the action of $\mathbf{P}(\Q)$ on $e(\mathbf{P})$ (\cite[Proposition 7.6]{BorelSerre}, \cite[III.5.13]{BorelJi}).

\medskip

The action of $\Gamma$ on $\XBS$ is properly discontinuous and the quotient $\Gamma\backslash \XBS$ is compact Hausdorff (\cite[Theorem 9.3]{BorelSerre}, \cite[III.5.14]{BorelJi}). If $\Gamma$ is torsion-free, then the action is free and the quotient map $\XBS\rightarrow \Gamma\backslash \XBS$ is a local homeomorphism. In particular, $\Gamma\backslash \XBS$ is also a manifold with corners, and $\Gamma \backslash X$ identifies with the interior of $\Gamma\backslash \XBS$. Thus, the embedding $Gamma \backslash X\hookrightarrow\Gamma\backslash\XBS$ is a homotopy equivalence and both spaces are models for the classifying space of $\Gamma$.

\medskip

We move on to define the reductive Borel--Serre compactification. For every rational parabolic subgroup $\mathbf{P}$ of $\mathbf{G}$ define the \textit{reductive Borel--Serre boundary component} as
\begin{align*}
\hat{e}(\mathbf{P})=X_{\mathbf{P}}
\end{align*}
so that we have a projection $e(\mathbf{P})\rightarrow \hat{e}(\mathbf{P})$ forgetting the factor $N_{\mathbf{P}}$ in the Borel--Serre boundary component. We define the \textit{partial reductive Borel--Serre compactification} as a set
\begin{align*}
\XRBS=\coprod_{\mathbf{P}} \hat{e}(\mathbf{P}),
\end{align*}
where we once again interpret $\mathbf{G}$ as a parabolic subgroup with $\hat{e}(\mathbf{G})=X_G=X$.

\medskip

The projections $e(\mathbf{P})\rightarrow \hat{e}(\mathbf{P})$ define a surjection $\XBS\rightarrow \XRBS$ and we equip $\XRBS$ with the quotient topology. The action of $\mathbf{G}(\Q)$ on $\XBS$ descends to a continuous action on $\XRBS$ and the quotient $\Gamma\backslash \XRBS$ is a compact Hausdorff space (\cite[Lemma 2.4]{JiMurtySaperScherk}), and this is the reductive Borel--Serre compactification.

\begin{remark}
It should be noted here that the quotient topology on $\XRBS$ does \textit{not} agree with that of the uniform construction in \cite{BorelJi} which is much weaker. We believe that the uniform construction agrees with the weaker topology $(\XRBS)^w$ of \Cref{remark about weak topology} with respect to the action of $\Gamma$.
\exend
\end{remark}

We have a commutative diagram of quotient maps as on the left below, which, if $\Gamma$ is neat, restricts to a commutative diagram of fibre bundles as on the right for each rational parabolic subgroup $\mathbf{P}$, where $\Gamma_{\mathbf{P}}=\Gamma\cap \mathbf{P}(\Q)$ and $\Gamma_{\mathbf{L_P}}=\Gamma_{\mathbf{P}}/\Gamma_{\mathbf{N_P}}$ with $\Gamma_{\mathbf{N_P}}=\Gamma\cap \mathbf{N_P}(\Q)$. The fibre of the lower horizontal map is the nilmanifold $\Gamma_{\mathbf{N_P}}\backslash N_{\mathbf{P}}$. Zucker originally defined the reductive Borel--Serre compactification as the quotient of $\Gamma\backslash \XBS$ given by collapsing these nilmanifold fibres.
\begin{center}
\begin{tikzpicture}
\matrix (m) [matrix of math nodes,row sep=2em,column sep=2em]
  {
     \XBS & \XRBS & & e(\mathbf{P}) & \hat{e}(\mathbf{P}) \\
     \Gamma\backslash \XBS & \Gamma\backslash \XRBS & & \Gamma_{\mathbf{P}}\backslash e(\mathbf{P}) & \Gamma_{\mathbf{L_P}}\backslash \hat{e}(\mathbf{P}) \\
  };
  \path[-stealth]
	(m-1-1) edge (m-1-2) edge (m-2-1)	
	(m-1-2) edge (m-2-2)
	(m-2-1) edge (m-2-2)
	(m-1-4) edge (m-1-5) edge (m-2-4)
	(m-2-4) edge (m-2-5)
	(m-1-5) edge (m-2-5)
  ;
\end{tikzpicture}
\end{center}

We will need the following observation: the spaces $\XBS$, $\Gamma\backslash \XBS$ and $\Gamma\backslash \XRBS$ are metrisable. Indeed, the partial Borel--Serre compactification is a second-countable manifold with corners (\cite[Theorem 7.8]{BorelSerre}), the Borel--Serre compactification is a compact manifold with corners, and the reductive Borel--Serre compactification is compact Hausdorff and locally metrisable.

\subsection{Stratifications and exit path \texorpdfstring{$\infty$}{infinity}-categories}\label{exit category of rbs}

In this section we use the results of \Cref{stratified spaces and group actions} to determine the exit path $\infty$-categories of the Borel--Serre and reductive Borel--Serre compactifications.

\medskip

We stratify the partial compactifications over the poset $\mathscr{P}$ of rational parabolic subgroups in the obvious way, sending the boundary component $e(\mathbf{P})$ respectively $\hat{e}(\mathbf{P})$ to $\mathbf{P}$. For the partial Borel--Serre compactification, this is the natural stratification of $\XBS$ as a manifold with corners; the codimension of the boundary component $e(\mathbf{P})$ is equal to the $\Q$-rank of $\mathbf{P}$: $\operatorname{rk}_{\Q}\mathbf{P}=\dim A_{\mathbf{P}}=|\Delta_{\mathbf{P}}|$. The action of $\Gamma$ on $\XBS$ and $\XRBS$ is a stratum preserving continuous action with $\Gamma$ acting on $\mathscr{P}$ by conjugation. Since a parabolic subgroup is its own normaliser, the stabiliser of $\mathbf{P}$ is $\Gamma_{\mathbf{P}}=\Gamma\cap \mathbf{P}(\Q)$ in both cases.

\medskip

Assume $\Gamma$ to be torsion free. The action of $\Gamma$ on $\XBS$ is free and properly discontinuous, the strata $e(\mathbf{P})$ are contractible and locally contractible, and, being a manifold with corners, $\XBS$ is metrisably conically stratified and the link spaces have contractible strata. Therefore \Cref{exit path categories and group actions with trivial stabilisers} applies.

\begin{theorem}\label{exit path category of the borel-serre compactification}
The exit path $\infty$-categories of the partial Borel--Serre compactification $\XBS$ and the Borel--Serre compactification $\Gamma\backslash \XBS$ are equivalent to the nerves of their homotopy categories. The homotopy category $\exit_1(\XBS)$ is in turn equivalent to the poset $\mathscr{P}$, and $\exit_1(\Gamma\backslash \XBS)$ is equivalent to the category $\mathscr{C}^{BS}_{\Gamma}$ with objects the rational parabolic subgroups of $\mathbf{G}$ and hom-sets
\begin{align*}
\mathscr{C}^{BS}_{\Gamma}(\mathbf{P},\mathbf{Q})=\{\gamma\in \Gamma\mid {}^\gamma \mathbf{P}\leq \mathbf{Q}\},\quad\text{for all}\quad\mathbf{P},\mathbf{Q}\in \mathscr{P},
\end{align*}
and composition given by the product in $\Gamma$.
\end{theorem}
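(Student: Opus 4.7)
The plan is to deduce both halves of the theorem from the general machinery developed in \Cref{stratified spaces and group actions}. For $\XBS$ alone I would apply \Cref{Exitcat=Poset}, and for $\Gamma\backslash\XBS$ I would apply \Cref{exit path categories and group actions with trivial stabilisers} to the action of $\Gamma$. The proof is then essentially a verification of the hypotheses of these results in the Borel--Serre setting.

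First I would check the stratified structure of $\XBS\to\mathscr{P}$. Each Borel--Serre boundary component $e(\mathbf{P})=N_{\mathbf{P}}\times X_{\mathbf{P}}$ is diffeomorphic to a Euclidean space, hence path connected and weakly contractible. The poset $\mathscr{P}$ satisfies the ascending chain condition, since the length of any strictly increasing chain of rational parabolic subgroups is bounded by the $\Q$-rank of $\mathbf{G}$ (the dimension of $A_{\mathbf{P}}$ strictly decreases as $\mathbf{P}$ increases). Because $\XBS$ is a real analytic manifold with corners, \Cref{manifolds with corners} provides, for each $x\in e(\mathbf{P})$, a conical neighbourhood $U\cong V\times C(\Delta^{j-1})$ with $j=\operatorname{rk}_{\Q}\mathbf{P}$, where $V$ is an open neighbourhood of $x$ in $e(\mathbf{P})$ (which can be shrunk to be contractible) and the strata of $\Delta^{j-1}$ are open simplices, hence contractible. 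Since $\XBS$ is metrisable (noted at the end of \Cref{rbs definition}) so is every subspace, so the metrisability conditions are automatic.

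Second I would record the properties of the $\Gamma$-action: since $\Gamma$ is neat it is torsion-free, so its action on $\XBS$ is free and properly discontinuous, and $\Gamma\backslash\XBS$ is a compact Hausdorff manifold with corners — in particular metrisable and locally contractible — all recalled in \Cref{rbs definition}. At this point \Cref{Exitcat=Poset} applies to $\XBS$ and yields an equivalence $\exit_\infty(\XBS)\simeq N(\mathscr{P})$, and \Cref{exit path categories and group actions with trivial stabilisers} applies to the $\Gamma$-action on $\XBS$. The latter produces an equivalence between $\exit_1(\Gamma\backslash\XBS)$ and the category $\mathscr{C}_{\Gamma,\XBS}$ with objects the rational parabolic subgroups and hom-sets $\{\gamma\in\Gamma\mid \gamma.\mathbf{P}\leq\mathbf{Q}\}$. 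As the $\Gamma$-action on $\mathscr{P}$ is by conjugation, we have $\gamma.\mathbf{P}={}^\gamma\mathbf{P}$, so this hom-set is precisely $\mathscr{C}^{BS}_{\Gamma}(\mathbf{P},\mathbf{Q})$ with composition the multiplication in $\Gamma$.

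Since the substantive work — identifying mapping spaces via homotopy links and showing the exit $\infty$-category is the nerve of a $1$-category — has already been carried out in \Cref{stratified spaces and group actions}, no genuine obstacle remains. The mildly delicate points are confirming the manifold-with-corners conical neighbourhood structure (so that link strata are weakly contractible), and ensuring the metrisability conditions hold both upstairs and downstairs; both are immediate from standard facts recalled in the paper.
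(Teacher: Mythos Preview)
Your proposal is correct and follows essentially the same approach as the paper: the paper's proof is the short paragraph immediately preceding the theorem, which simply notes that the strata are contractible, that $\XBS$ is a metrisably conically stratified manifold with corners with contractible link strata, and that the (torsion-free) $\Gamma$-action is free and properly discontinuous, so \Cref{exit path categories and group actions with trivial stabilisers} applies. Your write-up is a slightly more detailed verification of exactly these hypotheses (and your separate appeal to \Cref{Exitcat=Poset} for $\XBS$ itself is fine, since that corollary is subsumed in the hypotheses anyway).
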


Identifying the exit path $\infty$-category of the reductive Borel--Serre compactification requires a little more work in order to determine appropriate conical neighbourhoods. Moreover, the partial reductive Borel--Serre compactification is ``topologically conically stratified'' on non-compact link spaces, so the exit path simplicial set is not necessarily an $\infty$-category. We exploit that the paths in $\XBS$ descend to define a compatible collection paths in $\XRBS$ and apply \Cref{exit path categories and group actions II with composition}. We now assume $\Gamma$ to be neat.

\begin{theorem}\label{exit path category of the reductive borel-serre compactification}
The exit path $\infty$-category of the reductive Borel--Serre compactification $\Gamma\backslash \XRBS$ is equivalent to the nerve of its homotopy category. The homotopy category $\exit_1(\Gamma\backslash \XRBS)$ is in turn equivalent to the category $\mathscr{C}^{RBS}_{\Gamma}$ with objects the rational parabolic subgroups of $\mathbf{G}$ and hom-sets
\begin{align*}
\mathscr{C}^{RBS}_{\Gamma}(\mathbf{P},\mathbf{Q})=\{\gamma\in \Gamma\mid {}^\gamma\mathbf{P}\leq \mathbf{Q}\}/\Gamma_{\mathbf{N_P}},\quad\text{for all}\quad\mathbf{P},\mathbf{Q}\in \mathscr{P},
\end{align*}
where $\Gamma_{\mathbf{N_P}}$ acts by right multiplication, and composition is given by multiplication of representatives.
\end{theorem}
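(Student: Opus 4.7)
The plan is to apply \Cref{exit path categories and group actions II with composition} to the action of $\Gamma$ on the partial reductive Borel--Serre compactification $\XRBS$, stratified over $\mathscr{P}$ with $\Gamma$ acting by conjugation. Two preliminary observations set up the application. First, the stabiliser of a stratum $\hat{e}(\mathbf{P})=X_{\mathbf{P}}$ is $\Gamma_{\mathbf{P}}$, and since $\Gamma$ is neat the quotient $\Gamma_{\mathbf{L_P}}=\Gamma_{\mathbf{P}}/\Gamma_{\mathbf{N_P}}$ is torsion-free and acts freely on the symmetric space $X_{\mathbf{P}}$; consequently the pointwise stabiliser of the stratum is $G_i^\ell=\Gamma_{\mathbf{N_P}}$, exactly the group acting on hom-sets in $\mathscr{C}^{RBS}_\Gamma$. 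Second, the strata are contractible symmetric spaces, $\mathscr{P}$ satisfies the ascending chain condition, and the metrisability of $\XRBS$ and its $\Gamma$-quotient was recorded at the end of \Cref{rbs definition}.

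The main technical step is to produce, at each $\hat{x}\in\hat{e}(\mathbf{P})$, a $\Gamma_{\mathbf{N_P}}$-equivariant stratified homeomorphism $V_\mathbf{P}\times C^t(L_\mathbf{P})\xrightarrow{\cong} U_\mathbf{P}$ satisfying the orbit-type condition $\{\gamma\in\Gamma\mid \gamma.U_\mathbf{P}\cap U_\mathbf{P}\neq\emptyset\}=\Gamma_{\mathbf{N_P}}$. The construction is to push down along $\XBS\to\XRBS$: lift $\hat{x}$ to a point $x\in e(\mathbf{P})=N_\mathbf{P}\times X_\mathbf{P}$, use the Borel--Serre corner $X(\mathbf{P})\cong\overline{A_\mathbf{P}}\times N_\mathbf{P}\times X_\mathbf{P}$ to obtain a product neighbourhood in $\XBS$ whose non-apex strata are indexed by $\mathbf{Q}>\mathbf{P}$, and collapse each $N_\mathbf{Q}$-factor inside $e(\mathbf{Q})$ as dictated by the quotient. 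The resulting link $L_\mathbf{P}$ is stratified over $\mathscr{P}_{>\mathbf{P}}$ with contractible strata built from the nilpotent quotients $(\mathbf{N_P}/\mathbf{N_Q})(\R)$ and the symmetric spaces $X_\mathbf{Q}$. Shrinking $V_\mathbf{P}$ and the cone radius using proper discontinuity of the $\Gamma$-action on the finite union of nearby strata yields the orbit-type condition.

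The final step is to exhibit a compatible collection of paths $\gamma^\gamma_{\mathbf{P}\mathbf{Q}}$ fulfilling conditions (i)--(iv) of \Cref{exit path categories and group actions II with composition}. Here I would lift the problem to $\XBS$ and invoke \Cref{exit path category of the borel-serre compactification}, which identifies $\exit_1(\Gamma\backslash\XBS)$ with $\mathscr{C}^{BS}_\Gamma$. Choosing basepoints $x_\mathbf{P}\in e(\mathbf{P})$ and straight-line exit paths from $x_\mathbf{P}$ to $\gamma^{-1}.x_\mathbf{Q}$ inside the corner $X({}^\gamma\mathbf{P})\cap X(\mathbf{Q})$ (interpolating through the interior $X$ when needed) gives a representing system of paths for $\mathscr{C}^{BS}_\Gamma$ whose concatenations compose correctly in $\exit_1(\Gamma\backslash\XBS)$. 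Pushing these paths along $\XBS\to\XRBS\to\Gamma\backslash\XRBS$ produces the required system: stratum-preserving conditions (i)--(iii) descend from $\XBS$, and functoriality of composition (iv) is inherited from the BS identification since exit-path homotopies upstairs map to exit-path homotopies downstairs.

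I expect the main obstacle to be the construction and analysis of the conical charts in the middle step. The BS corner provides a clean product decomposition, but showing that the push-forward along $\XBS\to\XRBS$ remains a genuine stratified homeomorphism $V_\mathbf{P}\times C^t(L_\mathbf{P})\xrightarrow{\cong}U_\mathbf{P}$, with $\Gamma_{\mathbf{N_P}}$ acting purely on the link factor, requires controlling how $\Gamma_{\mathbf{N_P}}$ permutes $N_\mathbf{Q}$-orbits for $\mathbf{Q}>\mathbf{P}$ and how $\mathbf{N_Q}\leq\mathbf{N_P}$ threads through the nested stratification; this is where the delicate interaction between the corner product structure and the collapse of nilmanifold fibres has to be handled carefully, and is also where the necessity of using $C^t$ rather than $C$ enters, since $L_\mathbf{P}$ is typically non-compact.
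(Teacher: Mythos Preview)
Your proposal is correct and follows essentially the same route as the paper: apply \Cref{exit path categories and group actions II with composition} to $\Gamma$ acting on $\XRBS$, build the conical charts by pushing the Borel--Serre corner $X(\mathbf{P})\cong\overline{A_{\mathbf{P}}}\times N_{\mathbf{P}}\times X_{\mathbf{P}}$ down along $\XBS\to\XRBS$, and obtain the compatible system of exit paths by projecting the (unique up to homotopy) exit paths of $\XBS$. Two small corrections: the link $\mathscr{L}_{\mathbf{P}}$ has $\mathbf{Q}$'th stratum $\ell_{\mathbf{PQ}}\times N_{\mathbf{Q}}\backslash N_{\mathbf{P}}$ (an open simplex face times a nilpotent quotient) --- the symmetric space $X_{\mathbf{Q}}$ does not enter the link, only the base $W\subset X_{\mathbf{P}}$ does --- and you should also check condition~(ii) of \Cref{exit path categories and group actions II}, namely that $\Gamma_{\mathbf{N_P}}\backslash\mathscr{L}_{\mathbf{P}}$ is compact Hausdorff, which follows since it factors through $\ell_{\mathbf{P}}\times(\Gamma_{\mathbf{N_P}}\backslash N_{\mathbf{P}})$.
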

\begin{proof}
We will show that the stratified space $\XRBS\rightarrow \mathscr{P}$ equipped with the action of $\Gamma$ satisfies the conditions of \Cref{exit path categories and group actions II with composition}. Note first of all that the subgroup $\Gamma_{\mathbf{P}}^\ell\leq \Gamma$ which fixes the $\mathbf{P}$'th stratum $\hat{e}(\mathbf{P})$ pointwise is $\Gamma_{\mathbf{P}}^\ell=\Gamma_{\mathbf{N_P}}$.

\medskip

For all $\mathbf{P}\in \mathscr{P}$, stratify $\overline{A_{\mathbf{P}}}=(\R_{\geq 0})^{\Delta_{\mathbf{P}}}$ over $\mathscr{P}_{\geq \mathbf{P}}$ by sending $A_{\mathbf{P}}\cdot o_{\mathbf{Q}}$ to $\mathbf{Q}$ for all $\mathbf{Q}\geq \mathbf{P}$, where we recall that $A_{\mathbf{P}}\cdot o_{\mathbf{Q}}$ is the subspace spanned by the coordinates $\Delta_{\mathbf{P}}^{\mathbf{Q}}\subset \Delta_{\mathbf{P}}$ (all other coordinates are zero). These stratifications are compatible with the stratification of $\XBS$ as we have identified $e(\mathbf{Q})$ with $(A_{\mathbf{P}}\cdot o_{\mathbf{Q}})\times N_{\mathbf{P}}\times X_{\mathbf{P}}\subseteq X(\mathbf{P})$. For any $t>0$ and any rational parabolic subgroup $\mathbf{P}$, set
\begin{align*}
\overline{A_{\mathbf{P}}}(t):=\{a\in \overline{A_{\mathbf{P}}} \mid a^{-\alpha}<t\text{ for all }\alpha\in \Delta_{\mathbf{P}}\}\subseteq \overline{A_{\mathbf{P}}}
\end{align*}
stratified as a subspace of $\overline{A_{\mathbf{P}}}$, i.e. $\overline{A_{\mathbf{P}}}(t) = [0,t)^{\Delta_{\mathbf{P}}}$.

\medskip

Let $\mathbf{P}$ be a rational parabolic subgroup of $\mathbf{G}$. The group $\Gamma_{\mathbf{L_P}}$ is torsion free, as $\Gamma$ is neat, so it acts freely and properly discontinuously on the stratum $\hat{e}(\mathbf{P})$, Hence, we may choose an open, relatively compact and contractible subset $W\subseteq \hat{e}(\mathbf{P})$ such that
\begin{align*}
\{ \gamma\in \Gamma_{\mathbf{P}}\mid \gamma.W\cap W\neq \emptyset\}=\Gamma_{\mathbf{N_P}},
\end{align*}
and we can view $W$ as a subspace of the quotient $\Gamma_{\mathbf{L_P}}\backslash \hat{e}(\mathbf{P})$. Having compact fibres, the fibre bundle $\Gamma_{\mathbf{P}}\backslash e(\mathbf{P}) \rightarrow \Gamma_{\mathbf{L_P}}\backslash \hat{e}(\mathbf{P})$ is proper, and therefore the preimage $V\subseteq \Gamma_{\mathbf{P}}\backslash e(\mathbf{P})$ of $W$ under this map is relatively compact. The preimage of $V$ in $e(\mathbf{P})$ is $N_{\mathbf{P}}\times W$, and it follows that there is a $t>0$ such that the equivalence relations induced by $\Gamma$ and $\Gamma_{\mathbf{P}}$ on the subspace
\begin{align*}
\overline{A_{\mathbf{P}}}(t)\times N_{\mathbf{P}}\times W \subseteq \overline{A_{\mathbf{P}}}(t)\times e(\mathbf{P})\subseteq X(\mathbf{P})\subseteq \XBS
\end{align*}
of the partial Borel--Serre compactification agree (\cite[1.5]{Zucker86}).

\medskip

Consider the subspace
\begin{align*}
\ell_{\mathbf{P}}=\{a\in \overline{A_{\mathbf{P}}}\mid \textstyle\sum_{\alpha\in \Delta_{\mathbf{P}}} a^{-\alpha}=1\}\subseteq \overline{A_{\mathbf{P}}}
\end{align*}
and stratify it accordingly over $\mathscr{P}_{>\mathbf{P}}$. This is just the standard (topological) $(\dim A_{\mathbf{P}}-1)$-simplex embedded in the usual way in $\R^{\dim A_{\mathbf{P}}}=\R^{\Delta_{\mathbf{P}}}$ and stratified as a manifold with corners; the $\mathbf{Q}$'th stratum of $\ell_{\mathbf{P}}$ is $\ell_{\mathbf{PQ}}=\ell_{\mathbf{P}}\cap (A_{\mathbf{P}}\cdot o_{\mathbf{Q}})$. Let $C(\ell_{\mathbf{P}})\rightarrow \mathscr{P}_{\geq \mathbf{P}}$ denote the stratified cone on $\ell_{\mathbf{P}}$ (as $\ell_{\mathbf{P}}$ is compact Hausdorff, this agrees with the stratified topological cone $C^t(\ell_{\mathbf{P}})$). There is a stratum preserving embedding
\begin{align*}
C(\ell_{\mathbf{P}})\rightarrow \overline{A_{\mathbf{P}}}(t)
\end{align*}
given by sending $[a,s]\in C(\ell_{\mathbf{P}})$ to the point $b\in \overline{A_{\mathbf{P}}}(t)$ satisfying $b^{-\alpha}=st a^{-\alpha}$.

\medskip

Define a stratified space $\mathscr{L}_{\mathbf{P}}\rightarrow \mathscr{P}_{>\mathbf{P}}$ as the quotient of $\ell_{\mathbf{P}}\times N_{\mathbf{P}}\rightarrow \mathscr{P}_{>\mathbf{P}}$ given by applying the quotients 
\begin{align*}
\ell_{\mathbf{PQ}}\times N_{\mathbf{P}}\rightarrow \ell_{\mathbf{PQ}}\times N_{\mathbf{Q}}\backslash N_{\mathbf{P}}
\end{align*}
to the strata of $\ell_{\mathbf{P}}\times N_{\mathbf{P}}$. The embedding 
\begin{align*}
C(\ell_{\mathbf{P}})\times N_{\mathbf{P}}\times W\hookrightarrow \overline{A_{\mathbf{P}}}(t)\times N_{\mathbf{P}}\times W\hookrightarrow \XBS
\end{align*}
descends to define a stratum preserving embedding
\begin{align*}
C^t(\mathscr{L}_{\mathbf{P}})\times W\hookrightarrow \XRBS
\end{align*}
which restricts to the identity on $\{*\}\times W$ where $*$ is the apex of $C^t(\mathscr{L}_{\mathbf{P}})$ --- note that as $\mathscr{L}_{\mathbf{P}}$ is non-compact, the stratified topological cone is different from the stratified cone $C(\mathscr{L}_{\mathbf{P}})$. Let $U$ denote the image of the above map, so that we have a stratum preserving homeomorphism
\begin{align*}
\phi\colon C^t(\mathscr{L}_{\mathbf{P}})\times W\xrightarrow{\ \cong \ } U.
\end{align*}
Our choice of $W$ and $t$ imply that 
\begin{align*}
\{\gamma\in \Gamma\mid \gamma.U\cap U\neq \emptyset\}=\Gamma_{\mathbf{N_P}}.
\end{align*}
Moreover, $\phi$ is $\Gamma_{\mathbf{N_P}}$-equivariant, when we let $\Gamma_{\mathbf{N_P}}$ act on $\mathscr{L}_{\mathbf{P}}$ by acting on the second factor of the $\mathbf{Q}$'th stratum $\ell_{\mathbf{PQ}}\times N_{\mathbf{Q}}\backslash N_{\mathbf{P}}$ via the quotient $\Gamma_{\mathbf{N_Q}}\backslash \Gamma_{\mathbf{N_P}}$ for all $\mathbf{Q}$. The quotient $\Gamma_{\mathbf{N_P}}\backslash\mathscr{L}_{\mathbf{P}}$ is compact as it factors through $\ell_{\mathbf{P}}\times \Gamma_{\mathbf{N_P}}\backslash N_{\mathbf{P}}$. Hence, the conditions of \Cref{exit path categories and group actions II} are satisfied (the metrisability conditions are satisfied as $\Gamma\backslash \XRBS$ is metrisable).
 
\medskip

For a collection of compatible exit paths, we may choose the ones coming from the partial Borel--Serre compactification.  If $x,y\in \XBS$ are connected by a morphism in $\exit_1(\XBS)\simeq \mathscr{P}$, then this morphism is unique and we choose an exit path $p_{x\rightarrow y}$ in $\XBS$ representing this morphism (for $x=y$, we choose the trivial loop). Let $\mu\colon \XBS\rightarrow \XRBS$ denote the quotient map, and fix basepoints $x_{\mathbf{P}}\in e(\mathbf{P})$ for all $\mathbf{P}$. For any $\mathbf{P},\mathbf{Q}$ and $\gamma\in \Gamma$ with ${}^\gamma\mathbf{P}\leq \mathbf{Q}$, we choose the path $\mu \circ p_{x_{\mathbf{P}}\rightarrow \gamma^{-1}.x_{\mathbf{Q}}}$ in $\XRBS$. Then \Cref{exit path categories and group actions II with composition} applies and we are done.
\end{proof}

\begin{remark}
The uniform construction of \cite{BorelJi} gives rise to a conically stratified space equipped with an action of $\Gamma$ whose quotient space agrees with $\Gamma\backslash \XRBS$. We believe that one can apply \Cref{exit path categories and group actions} to this space directly, but by using the Borel--Serre compactification to define a collection of compatible exit paths, we save ourselves the trouble of having to analyse this topology in detail.
\exend
\end{remark}

\begin{remark}
We wish to remark that the identification of neighbourhoods and link spaces in the proof of \Cref{exit path category of the reductive borel-serre compactification} make no claim to originality (see \cite{JiMurtySaperScherk}, \cite{GoreskyHarderMacPherson}, \cite{Zucker86}, \cite{BorelJi}). We just make a detailed analysis in order to verify the conditions of \Cref{exit path categories and group actions II}.
\exend
\end{remark}

\begin{observation}\label{compatible equivalences}
The equivalences
\begin{align*}
\mathscr{C}^{BS}_{\Gamma}\rightarrow \exit_1(\Gamma\backslash \XBS)\quad\text{and}\quad \mathscr{C}^{RBS}_{\Gamma}\rightarrow \exit_1(\Gamma\backslash \XRBS)
\end{align*}
of the theorems above can be defined compatibly as follows: for any rational parabolic subgroup $\mathbf{P}$, choose a basepoint $x_{\mathbf{P}}\in e(\mathbf{P})$ in the Borel--Serre boundary component --- note that it in fact suffices to make a choice of basepoint $x_0\in X$, i.e.~a choice of maximal compact subgroup $K\leq G$, as this gives canonical choices of basepoints in the boundary components corresponding to the maximal compact subgroups $K\cap P\leq P$ for varying $\mathbf{P}$. For any two points $x,x'\in \XBS$, if there is a morphism $x\rightarrow x'$ in $\exit_1(\XBS)$, then it is unique, and we denote it by $p_{x\rightarrow x'}$.

\medskip

Recall the commutative diagram of quotient maps below. We denote by $(-)_*$ the induced map of exit path categories whenever this makes sense (the exit path simplicial set of the partial reductive Borel--Serre compactification is not necessarily an $\infty$-category).

\begin{center}
\begin{tikzpicture}
\matrix (m) [matrix of math nodes,row sep=2em,column sep=2em]
  {
     \XBS & \XRBS  \\
     \Gamma\backslash \XBS & \Gamma\backslash \XRBS \\
  };
  \path[-stealth]
	(m-1-1) edge node[above]{$\mu$} (m-1-2) edge node[left]{$\pi$} (m-2-1)	
	(m-1-2) edge node[right]{$\rho$} (m-2-2)
	(m-2-1) edge node[below]{$\nu$}(m-2-2)
  ;
\end{tikzpicture}
\end{center}
 
With respect to the basepoints $x_{\mathbf{P}}\in e(\mathbf{P})$, the equivalences
\begin{align*}
F^{BS}\colon \mathscr{C}^{BS}_{\Gamma}\rightarrow \exit_1(\Gamma\backslash \XBS),\quad\text{and}\quad F^{RBS}\colon\mathscr{C}^{RBS}_{\Gamma}\rightarrow \exit_1(\Gamma\backslash \XRBS)
\end{align*}
are given by
\begin{align*}
F^{BS}(\mathbf{P})=\pi(x_{\mathbf{P}})\quad\text{and}\quad F^{RBS}(\mathbf{P})=\rho(\mu(x_{\mathbf{P}})),
\end{align*}
on objects, and on morphisms by
\begin{align*}
&F^{BS}(\gamma\colon \mathbf{P}\rightarrow \mathbf{Q})=\pi_*(p_{x_{\mathbf{P}}\rightarrow \gamma^{-1}.x_{\mathbf{Q}}}),\\
&F^{RBS}([\gamma]\colon \mathbf{P}\rightarrow \mathbf{Q})=(\rho\circ \mu)_*(p_{x_{\mathbf{P}}\rightarrow \gamma^{-1}.x_{\mathbf{Q}}}).
\end{align*}

The following diagram commutes
\begin{center}
\begin{tikzpicture}
\matrix (m) [matrix of math nodes,row sep=2em,column sep=2em]
  {
     \mathscr{P} & \mathscr{C}^{BS}_{\Gamma} & \mathscr{C}^{RBS}_{\Gamma}  \\
     \exit_1(\XBS) & \exit_1(\Gamma\backslash \XBS) & \exit_1(\Gamma\backslash \XRBS) \\
  };
  \path[-stealth] 
	(m-1-1) edge (m-1-2) edge (m-2-1) 
	(m-1-2) edge  (m-1-3) edge node[right]{$F^{BS}$} (m-2-2) 
	(m-1-3) edge node[right]{$F^{RBS}$} (m-2-3)
	(m-2-1) edge node[above]{$\pi_*$} (m-2-2)
	(m-2-2) edge node[above]{$\nu_*$} (m-2-3)
  ;
\end{tikzpicture}
\end{center}
when $\mathscr{P}\rightarrow \mathscr{C}^{BS}_{\Gamma}$ is the inclusion as a subcategory sending the unique morphism $\mathbf{P}\leq \mathbf{Q}$ to the morphism $\mathbf{P}\rightarrow \mathbf{Q}$ given by the identity element in $\Gamma$; the functor $\mathscr{C}^{BS}_{\Gamma}\rightarrow \mathscr{C}^{RBS}_{\Gamma}$ is given by the obvious quotients on the hom-sets; and $\mathscr{P}\xrightarrow{\sim} \exit_1(\XBS)$ is given by sending $\mathbf{P}$ to $x_{\mathbf{P}}$.
\exend
\end{observation}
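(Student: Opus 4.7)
The plan is to make a single, coordinated choice of basepoints and exit paths which simultaneously feeds into the constructions of $F^{BS}$ and $F^{RBS}$ from Theorems \ref{exit path category of the borel-serre compactification} and \ref{exit path category of the reductive borel-serre compactification}. Concretely, I would fix $x_0\in X$ (equivalently, a maximal compact $K\leq G$), which determines canonical basepoints $x_{\mathbf{P}}\in e(\mathbf{P})$ for all $\mathbf{P}\in \mathscr{P}$ via the maximal compact subgroups $K\cap P$, and then push these forward under $\mu$ to get compatible basepoints $\mu(x_{\mathbf{P}})\in \hat{e}(\mathbf{P})$ for $\XRBS$. With respect to these basepoints and the chosen exit paths $p_{x\to x'}$ in $\XBS$, the formulas displayed in the statement for $F^{BS}$ and $F^{RBS}$ are exactly the ones produced by the proofs of \Cref{exit path categories and group actions with trivial stabilisers} and \Cref{exit path categories and group actions II with composition}; the relevant compatibility conditions (i)--(iv) for the collection of paths $\{\mu\circ p_{x_{\mathbf{P}}\to \gamma^{-1}.x_{\mathbf{Q}}}\}$ were already verified in the proof of \Cref{exit path category of the reductive borel-serre compactification}.

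With the functors in hand, I would verify the two squares separately. The left square commutes by construction: the inclusion $\mathscr{P}\hookrightarrow \mathscr{C}^{BS}_{\Gamma}$ sends $\mathbf{P}\leq \mathbf{Q}$ to the identity representative $e\in \Gamma$, so composition with $F^{BS}$ gives $\pi_*(p_{x_{\mathbf{P}}\to x_{\mathbf{Q}}})$, which is also what one gets by going down-then-right through $\exit_1(\XBS)$. For the right square the key input is the commutativity $\nu\circ \pi = \rho\circ \mu$ of the underlying quotient diagram, combined with functoriality of the induced maps $(-)_*$ on exit path categories. On objects this yields $F^{RBS}(\mathbf{P}) = \rho(\mu(x_{\mathbf{P}})) = \nu(\pi(x_{\mathbf{P}})) = \nu_*(F^{BS}(\mathbf{P}))$, and on morphisms $F^{RBS}([\gamma]) = (\rho\circ\mu)_*(p_{x_{\mathbf{P}}\to \gamma^{-1}.x_{\mathbf{Q}}}) = \nu_*(\pi_*(p_{x_{\mathbf{P}}\to \gamma^{-1}.x_{\mathbf{Q}}})) = \nu_*(F^{BS}(\gamma))$. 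Well-definedness of $F^{RBS}$ on $\Gamma/\Gamma_{\mathbf{N_P}}$-equivalence classes follows because, for $n\in \Gamma_{\mathbf{N_P}}$, the paths $p_{x_{\mathbf{P}}\to \gamma^{-1}.x_{\mathbf{Q}}}$ and $p_{x_{\mathbf{P}}\to (\gamma n)^{-1}.x_{\mathbf{Q}}}$ become equal in $\exit_1(\Gamma\backslash \XRBS)$ after applying $(\rho\circ\mu)_*$, which is precisely what is guaranteed by the quotient in the definition of $\mathscr{C}^{RBS}_{\Gamma}$.

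The main obstacle is organizational rather than mathematical: the substance of the observation is that choices which were made independently in the proofs of the earlier theorems can be aligned, and the verification reduces to careful bookkeeping through those proofs. No new ideas are required beyond the commutativity of the quotient square, the functoriality of $(-)_*$ with respect to continuous stratum preserving maps, and the fact already established that the descended exit paths from $\XBS$ satisfy the compatibility conditions needed to invoke \Cref{exit path categories and group actions II with composition}.
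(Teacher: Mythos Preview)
Your proposal is correct and matches the paper's approach. In fact, the paper presents this as an observation without a separate proof, since it is a direct consequence of the constructions in the proofs of Theorems~\ref{exit path category of the borel-serre compactification} and~\ref{exit path category of the reductive borel-serre compactification}; your write-up simply makes explicit the bookkeeping (choice of compatible basepoints, the identity $\nu\circ\pi=\rho\circ\mu$, and functoriality of $(-)_*$) that the paper leaves implicit.
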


\section{Consequences: homotopy type and the constructible derived category}\label{consequences}

We derive some immediate corollaries to \Cref{exit path category of the reductive borel-serre compactification}, the identification of the exit path $\infty$-category of the reductive Borel--Serre compactification. We determine the homotopy type of the reductive Borel--Serre compactification and in particular the fundamental group, and we review the classification of constructible sheaves and the constructible derived category.

\medskip

Let $\mathbf{G}$ be a connected reductive linear algebraic group over $\Q$ of positive $\Q$-rank whose centre is anisotropic over $\Q$. For a given neat arithmetic group $\Gamma\leq \mathbf{G}(\Q)$, let $\Gamma\backslash \XRBS$ denote the reductive Borel--Serre compactification of the associated locally symmetric space $\Gamma\backslash X$ as defined in \Cref{rbs definition}. Let $\mathscr{C}^{RBS}_\Gamma$ be the category defined in \Cref{exit path category of the reductive borel-serre compactification}.

\medskip
 
Since the inclusion of the exit path $\infty$-category into the singular set is a weak homotopy equivalence of simplicial sets, we recover the homotopy type of the reductive Borel--Serre compactification (\Cref{homotopy type and group actions}).

\begin{corollary}\label{weak homotopy equivalence rbs with crbs}
The reductive Borel--Serre compactification $\Gamma\backslash \XRBS$ is weakly homotopy equivalent to the geometric realisation of $\mathscr{C}^{RBS}_{\Gamma}$.
\end{corollary}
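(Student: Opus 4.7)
The plan is to combine the main theorem with the general principle that the geometric realisation of the exit path $\infty$-category recovers the underlying space (up to weak equivalence), as encoded in \Cref{exit and sing weak htpy equiv} and applied in \Cref{homotopy type and group actions}. Concretely, I would exhibit a zig-zag of weak homotopy equivalences
\begin{align*}
|\mathscr{C}^{RBS}_\Gamma| \;=\; |N \mathscr{C}^{RBS}_\Gamma| \;\xrightarrow{\ \sim\ }\; |N(\exit_1(\Gamma\backslash \XRBS))| \;\xleftarrow{\ \sim\ }\; |\exit_\infty(\Gamma\backslash \XRBS)| \;\xrightarrow{\ \sim\ }\; |\operatorname{Sing}(\Gamma\backslash \XRBS)|
\end{align*}
and identify the last term with $\Gamma\backslash \XRBS$ (up to weak equivalence) in the standard way.

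The first arrow is the geometric realisation of the equivalence $\mathscr{C}^{RBS}_\Gamma \xrightarrow{\sim} \exit_1(\Gamma\backslash \XRBS)$ furnished by \Cref{exit path category of the reductive borel-serre compactification}. The second arrow is induced by the canonical map $\exit_\infty(\Gamma\backslash \XRBS) \to N(\exit_1(\Gamma\backslash \XRBS))$ from an $\infty$-category to the nerve of its homotopy category; by the same theorem this map is an equivalence of $\infty$-categories, hence a weak equivalence of simplicial sets.

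The third arrow is the content of \Cref{exit and sing weak htpy equiv}, which requires $\Gamma\backslash \XRBS$ to be paracompact, locally contractible, and conically stratified over a poset satisfying the ascending chain condition. The poset $\mathscr{P}$ of rational parabolic subgroups satisfies the ascending chain condition because chains of parabolics in $\mathbf{G}$ are bounded by the $\Q$-rank. The space $\Gamma\backslash \XRBS$ is compact Hausdorff and metrisable (as noted at the end of \Cref{rbs definition}), hence paracompact; it is locally contractible by the explicit conical neighbourhoods $\widehat{\phi}\colon W \times C^t(\Gamma_{\mathbf{N_P}}\backslash \mathscr{L}_{\mathbf{P}}) \xrightarrow{\cong} \widehat{U}$ constructed in the proof of \Cref{exit path category of the reductive borel-serre compactification}, which also furnish the conical stratification. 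Finally, the identification $|\operatorname{Sing}(\Gamma\backslash \XRBS)| \simeq \Gamma\backslash \XRBS$ is the classical fact that the counit of the $|{-}| \dashv \operatorname{Sing}$ adjunction is a weak homotopy equivalence for any space (and in fact a homotopy equivalence for CW-complexes such as $\Gamma\backslash \XRBS$).

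There is no real obstacle here: all the hard work has been done in establishing \Cref{exit path category of the reductive borel-serre compactification}. The only subtlety is in verifying the hypotheses of \Cref{exit and sing weak htpy equiv} for the non-manifold space $\Gamma\backslash \XRBS$, but the conical charts produced in the proof of the main theorem make this immediate. Indeed, this corollary is just the instance of \Cref{homotopy type and group actions} applied to the setup (verified in the main theorem) of \Cref{exit path categories and group actions II with composition}.
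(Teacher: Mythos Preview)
Your proposal is correct and follows essentially the same approach as the paper: the paper simply invokes \Cref{homotopy type and group actions}, whose proof is precisely the zig-zag you write down, and you correctly identify that the hypotheses needed for \Cref{exit and sing weak htpy equiv} are established in the course of proving \Cref{exit path category of the reductive borel-serre compactification}. One small remark: the stratifying poset for $\Gamma\backslash\XRBS$ is the quotient $\Gamma\backslash\mathscr{P}$ rather than $\mathscr{P}$ itself, but this of course inherits the ascending chain condition.
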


The fundamental group of the geometric realisation of a small category is the localisation of the category at all morphisms (\cite[Proposition 1]{Quillen73}). We thus recover the following result of Ji--Murty--Saper--Scherk (\cite[Corollary 5.3]{JiMurtySaperScherk}).

\begin{corollary}\label{fundamental group of rbs}
The fundamental group of the reductive Borel--Serre compactification $\Gamma\backslash \XRBS$ is isomorphic to the group $\Gamma/E_\Gamma$, where $E_\Gamma\triangleleft \Gamma$ is the normal subgroup generated by the subgroups $\Gamma_{\mathbf{N_P}}\leq \Gamma$ as $\mathbf{P}$ runs through all rational parabolic subgroups of $\mathbf{G}$.
\end{corollary}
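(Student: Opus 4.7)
The plan is to combine Corollary \ref{weak homotopy equivalence rbs with crbs}, which gives a weak homotopy equivalence $\Gamma\backslash \XRBS \simeq |\mathscr{C}^{RBS}_\Gamma|$, with the cited result of Quillen that the fundamental group of the classifying space of a small category is canonically isomorphic to the localisation of the category at the class of all its morphisms. Taking $\mathbf{G}$ as basepoint, it therefore suffices to identify $\operatorname{Aut}_L(\mathbf{G}) \cong \Gamma/E_\Gamma$, where $L$ denotes the groupoid reflection of $\mathscr{C}^{RBS}_\Gamma$.

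The key structural observation about $\mathscr{C}^{RBS}_\Gamma$ is that the identity element $e \in \Gamma$ represents a morphism $\iota_{\mathbf{P}} \colon \mathbf{P} \to \mathbf{G}$ for every rational parabolic subgroup $\mathbf{P}$, since trivially ${}^e\mathbf{P} \leq \mathbf{G}$. This shows $|\mathscr{C}^{RBS}_\Gamma|$ is path connected, and in the localisation $L$ every $\iota_{\mathbf{P}}$ becomes an isomorphism. Moreover, since $\mathbf{N_G}$ is trivial we have $\operatorname{End}_{\mathscr{C}^{RBS}_\Gamma}(\mathbf{G}) = \Gamma$, so we obtain a canonical homomorphism $\Gamma \to \operatorname{Aut}_L(\mathbf{G})$. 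I would then argue that this is surjective by noting that any morphism $[\gamma] \colon \mathbf{P} \to \mathbf{Q}$ in $\mathscr{C}^{RBS}_\Gamma$ equals the composite $\iota_{\mathbf{Q}}^{-1} \circ \gamma \circ \iota_{\mathbf{P}}$ in $L$ (using the factorisation identity $\iota_{\mathbf{Q}} \circ [\gamma] = [\gamma]$ in $\mathscr{C}^{RBS}_\Gamma(\mathbf{P},\mathbf{G})$ coming from $\gamma \cdot e = \gamma$), so every loop at $\mathbf{G}$ in $L$ can be rewritten using only elements of $\Gamma$ and the $\iota_{\mathbf{P}}^{\pm 1}$, both of which lie in the image.

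To identify the kernel, I would construct a functor $\Phi \colon \mathscr{C}^{RBS}_\Gamma \to B(\Gamma/E_\Gamma)$ sending every object to the unique object and sending a morphism $[\gamma]$ to $\gamma \bmod E_\Gamma$. This is well-defined precisely because $\Gamma_{\mathbf{N_P}} \subseteq E_\Gamma$ for every $\mathbf{P}$, and it is functorial because composition in $\mathscr{C}^{RBS}_\Gamma$ is given by multiplication of representatives. Since $B(\Gamma/E_\Gamma)$ is a groupoid, $\Phi$ factors through $L$ and restricts to a homomorphism $\operatorname{Aut}_L(\mathbf{G}) \to \Gamma/E_\Gamma$ splitting the surjection above. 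This immediately gives the isomorphism $\operatorname{Aut}_L(\mathbf{G}) \cong \Gamma/E_\Gamma$.

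The only non-bookkeeping step is verifying that $E_\Gamma$ really lies in the kernel of $\Gamma \twoheadrightarrow \operatorname{Aut}_L(\mathbf{G})$; this is the main thing to check, but it follows directly from the relation $\iota_{\mathbf{P}} = n \cdot \iota_{\mathbf{P}}$ in $L$ for each $n \in \Gamma_{\mathbf{N_P}}$ (which holds because $[e]=[n]$ in $\mathscr{C}^{RBS}_\Gamma(\mathbf{P},\mathbf{G})$ by the quotient by right multiplication by $\Gamma_{\mathbf{N_P}}$), together with the invertibility of $\iota_{\mathbf{P}}$ forcing $n = 1$. Normal generation then kills all of $E_\Gamma$, and the argument closes.
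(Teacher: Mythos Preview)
Your proposal is correct and follows exactly the route the paper indicates: use Corollary~\ref{weak homotopy equivalence rbs with crbs} together with Quillen's identification of $\pi_1(|\mathscr{C}|)$ with the groupoid localisation, then compute the localisation of $\mathscr{C}^{RBS}_\Gamma$. The paper leaves that last computation implicit, and your argument (using that $\mathbf{G}$ is weakly terminal via the morphisms $\iota_{\mathbf{P}}$, that $\operatorname{End}(\mathbf{G})=\Gamma$ since $\mathbf{N_G}$ is trivial, and that the relations $[e]=[n]$ in $\mathscr{C}^{RBS}_\Gamma(\mathbf{P},\mathbf{G})$ for $n\in\Gamma_{\mathbf{N_P}}$ kill exactly $E_\Gamma$) fills it in correctly. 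One small expository wrinkle: the map $\operatorname{Aut}_L(\mathbf{G})\to\Gamma/E_\Gamma$ is not a ``splitting'' of $\Gamma\twoheadrightarrow\operatorname{Aut}_L(\mathbf{G})$ in the usual sense; what you actually use is that the composite $\Gamma\to\operatorname{Aut}_L(\mathbf{G})\to\Gamma/E_\Gamma$ is the quotient map, giving $\ker\subseteq E_\Gamma$, while your final paragraph gives $\ker\supseteq E_\Gamma$ --- so the isomorphism is only concluded \emph{after} that last step, not before.
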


\begin{remark}
One should think of $E_\Gamma$ as the subgroup of ``elementary matrices'', cf. the case $\Gamma\leq \operatorname{GL}_n(\Z)$, $n\geq 3$.
\exend
\end{remark}

Having determined the exit path $\infty$-category, we get a classification of constructible sheaves on $\Gamma\backslash \XRBS$ as representations of $\mathscr{C}^{RBS}_\Gamma$ (\Cref{exit paths classify constructible sheaves}).

\begin{corollary}
For any compactly generated $\infty$-category $\mathscr{C}$, there is an equivalence of $\infty$-categories
\begin{align*}
\Psi_X\colon \operatorname{Fun}(N(\mathscr{C}^{RBS}_{\Gamma}), \mathscr{C})\rightarrow \operatorname{Shv}_{\operatorname{cbl}}(\Gamma\backslash \XRBS,\mathscr{C}).
\end{align*}
\end{corollary}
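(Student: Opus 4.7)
The plan is to deduce this corollary directly from two ingredients already established in the paper: \Cref{exit path category of the reductive borel-serre compactification}, which identifies $\exit_\infty(\Gamma\backslash \XRBS)$ with the nerve $N(\mathscr{C}^{RBS}_{\Gamma})$, and \Cref{exit paths classify constructible sheaves}, the general equivalence $\operatorname{Fun}(\exit_\infty(X), \mathscr{C})\simeq \operatorname{Shv}_{\operatorname{cbl}}(X,\mathscr{C})$ valid for sufficiently nice stratified spaces $X$. The whole task therefore reduces to verifying that $\Gamma\backslash \XRBS$, with its stratification by $\Gamma$-conjugacy classes of rational parabolic subgroups, satisfies the hypotheses of the latter theorem: paracompactness, local contractibility, conical stratification, and the ascending chain condition on the stratifying poset.

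First, I would check paracompactness and conical stratification. Paracompactness is immediate, since $\Gamma\backslash \XRBS$ is compact Hausdorff. Conical stratification is essentially built into the proof of \Cref{exit path category of the reductive borel-serre compactification}: each point of $\Gamma\backslash \XRBS$ admits a stratified neighbourhood of the form $V\times C^t(\Gamma_{\mathbf{N_P}}\backslash \mathscr{L}_{\mathbf{P}})$, and the key observation is that $\Gamma_{\mathbf{N_P}}\backslash \mathscr{L}_{\mathbf{P}}$ is compact Hausdorff, so that the stratified topological cone coincides with the stratified cone of \Cref{stratified cone}. Thus the descent gives genuine conical neighbourhoods in the sense required.

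Next, I would verify local contractibility and the ascending chain condition. Local contractibility follows at once from the conical neighbourhoods: $V$ is contractible by construction, and $C(L)$ deformation retracts onto its apex along the cone coordinate, so the product is contractible. For the ACC, the stratifying poset is the quotient $\Gamma\backslash \mathscr{P}$; any chain there lifts, by successively choosing conjugates, to a chain in $\mathscr{P}$ of the same length, and chains in $\mathscr{P}$ are bounded by $\operatorname{rk}_{\Q}(\mathbf{G})+1$, since the $\Q$-rank of a rational parabolic strictly decreases under proper inclusion into a larger rational parabolic.

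With all four hypotheses in place, it remains only to transport the equivalence of \Cref{exit paths classify constructible sheaves} along the identification $\exit_\infty(\Gamma\backslash \XRBS)\simeq N(\mathscr{C}^{RBS}_{\Gamma})$ of \Cref{exit path category of the reductive borel-serre compactification}. I do not expect a serious obstacle: the corollary is essentially formal once the main theorem is granted. The mildly delicate point is the compatibility between the two cone notions in the conical charts, but this is already handled in the proof of the main theorem by the compactness of $\Gamma_{\mathbf{N_P}}\backslash \mathscr{L}_{\mathbf{P}}$.
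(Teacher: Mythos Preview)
Your proposal is correct and follows exactly the approach the paper intends: the paper states this corollary without proof, simply prefacing it with the remark that it follows from \Cref{exit paths classify constructible sheaves} together with the identification of the exit path $\infty$-category in \Cref{exit path category of the reductive borel-serre compactification}. Your explicit verification of the hypotheses (paracompactness, local contractibility, conical stratification, and the ascending chain condition) is more detailed than anything the paper writes here, but each point is indeed either established earlier in the paper or immediate from what has been shown, so nothing is missing.
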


Now, as the exit path $\infty$-category of $\Gamma\backslash \XRBS$ is equivalent to the nerve of its homotopy category, we can apply \Cref{equivalence of derived infinity-categories} and \Cref{equivalence of derived 1-categories} to express the constructible derived category as a derived functor category.

\begin{theorem}\label{constructible derived category of RBS infinity}
Let $R$ be an associative ring. There is an equivalence of $\infty$-categories
\begin{align*}
\operatorname{Shv}_{\operatorname{cbl}}(\Gamma\backslash \XRBS,\operatorname{LMod}_R)\simeq \mathscr{D}(\operatorname{Fun}\big(\mathscr{C}^{RBS}_{\Gamma},\operatorname{LMod}_R^1)),
\end{align*}
which restricts to an equivalence
\begin{align*}
\operatorname{Shv}_{\operatorname{cbl,cpt}}(\Gamma\backslash \XRBS,\operatorname{LMod}_R)\simeq \mathscr{D}_{\operatorname{cpt}}(\operatorname{Fun}\big(\mathscr{C}^{RBS}_{\Gamma},\operatorname{LMod}_R^1)),
\end{align*}
where $\mathscr{D}_{\operatorname{cpt}}(\operatorname{Fun}\big(\mathscr{C}^{RBS}_{\Gamma},\operatorname{LMod}_R^1))\subset \mathscr{D}(\operatorname{Fun}\big(\mathscr{C}^{RBS}_{\Gamma},\operatorname{LMod}_R^1))$ is the full subcategory spanned by the complexes of functors $F_\bullet$ such that $F_\bullet(x)$ is a perfect complex for all $x\in X$.
\end{theorem}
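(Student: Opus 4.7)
The plan is to apply \Cref{equivalence of derived infinity-categories} verbatim, with $X=\Gamma\backslash \XRBS$ stratified over the poset $\Gamma\backslash \mathscr{P}$ of $\Gamma$-conjugacy classes of rational parabolic subgroups, and then substitute the identification of the exit path $1$-category provided by \Cref{exit path category of the reductive borel-serre compactification}. The compact-valued statement follows from the second clause of \Cref{equivalence of derived infinity-categories} in exactly the same way.

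First I would check the four hypotheses needed to invoke \Cref{equivalence of derived infinity-categories}. Paracompactness is immediate since $\Gamma\backslash \XRBS$ is compact Hausdorff (as noted in \Cref{rbs definition}). Conical stratification and local contractibility were established along the way in the proof of \Cref{exit path category of the reductive borel-serre compactification}: the neighbourhoods $\widehat{\phi}\colon W\times C(\Gamma_{\mathbf{N_P}}\backslash \mathscr{L}_{\mathbf{P}})\xrightarrow{\cong} \rho(\mu(U))$ built there exhibit $\Gamma\backslash\XRBS$ as metrisably conically stratified with locally contractible base, hence locally contractible. For the ascending chain condition on $\Gamma\backslash \mathscr{P}$: any chain of rational parabolics $\mathbf{P}_0<\mathbf{P}_1<\cdots$ has length at most $\operatorname{rk}_\Q\mathbf{G}$, since strict inclusions strictly decrease the $\Q$-rank of the unipotent radical, and this property descends to $\Gamma$-orbits. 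Finally, the condition that $\exit_\infty(\Gamma\backslash \XRBS)$ be equivalent to the nerve of its homotopy category is exactly the content of \Cref{exit path category of the reductive borel-serre compactification}.

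With these hypotheses in hand, \Cref{equivalence of derived infinity-categories} produces an equivalence
\[
\operatorname{Shv}_{\operatorname{cbl}}(\Gamma\backslash \XRBS,\operatorname{LMod}_R)\simeq \mathscr{D}(\operatorname{Fun}(\exit_1(\Gamma\backslash \XRBS),\operatorname{LMod}_R^1))
\]
which, composed with the equivalence $\mathscr{C}^{RBS}_\Gamma\xrightarrow{\simeq}\exit_1(\Gamma\backslash \XRBS)$ from \Cref{exit path category of the reductive borel-serre compactification}, yields the first claimed equivalence. The restriction to the subcategory of complexes with perfect stalks on one side and pointwise-perfect functors on the other side is precisely the compact-object statement in the second half of \Cref{equivalence of derived infinity-categories}, and so follows from the fact that the equivalence $\mathscr{C}^{RBS}_\Gamma\xrightarrow{\simeq}\exit_1(\Gamma\backslash \XRBS)$ is bijective on objects (up to equivalence) so that perfectness at every object of $\mathscr{C}^{RBS}_\Gamma$ corresponds to perfectness at every point of $\Gamma\backslash \XRBS$.

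There is no real obstacle here; the entire content of the theorem has been developed in \Cref{constructible derived category} and in \Cref{exit category of rbs}. If any verification step requires more care it is the ascending chain condition on $\Gamma\backslash\mathscr{P}$, but as indicated above this is a one-line bound by the $\Q$-rank of $\mathbf{G}$.
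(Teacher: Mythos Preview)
Your proposal is correct and matches the paper's approach exactly: the paper simply states that this theorem follows by applying \Cref{equivalence of derived infinity-categories} together with the identification of \Cref{exit path category of the reductive borel-serre compactification}, without even writing out the hypothesis checks you supply. One small wording issue: in your ascending chain argument it is the dimension of $A_{\mathbf{P}}$ (equivalently $|\Delta_{\mathbf{P}}|$) that strictly decreases along a strict chain $\mathbf{P}_0<\mathbf{P}_1<\cdots$, not ``the $\Q$-rank of the unipotent radical'', but the bound by $\operatorname{rk}_{\Q}\mathbf{G}$ and hence the conclusion are correct.
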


The $1$-categorical version of this is as follows.

\begin{corollary}\label{constructible derived category of RBS}
Let $R$ be an associative ring. There is an equivalence of $1$-categories
\begin{align*}
D_{\operatorname{cbl}}(\operatorname{Shv}_1(\Gamma\backslash \XRBS,R))\simeq D(\operatorname{Fun}(\mathscr{C}^{RBS}_{\Gamma},\operatorname{LMod}_R^1))
\end{align*}
which restricts to an equivalence
\begin{align*}
D_{\operatorname{cbl,cpt}}(\operatorname{Shv}_1(\Gamma\backslash \XRBS,R))\simeq D_{\operatorname{cpt}}(\operatorname{Fun}(\mathscr{C}^{RBS}_{\Gamma},\operatorname{LMod}_R^1)),
\end{align*}
where $D_{\operatorname{cpt}}(\operatorname{Fun}(\mathscr{C}^{RBS}_{\Gamma},\operatorname{LMod}_R^1))\subset D(\operatorname{Fun}(\mathscr{C}^{RBS}_{\Gamma},\operatorname{LMod}_R^1))$ is the full subcategory spanned by the complexes of functors $F_\bullet$ such that $F_\bullet(x)$ is a perfect complex for all $x\in X$.
\end{corollary}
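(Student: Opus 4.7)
The plan is to deduce this $1$-categorical statement directly from \Cref{constructible derived category of RBS infinity} by passing to homotopy categories, so the proof will essentially be a bookkeeping exercise. First I would apply the homotopy category functor to the $\infty$-categorical equivalence provided by that theorem; since any equivalence of $\infty$-categories descends to an equivalence of underlying $1$-categories, this immediately produces an equivalence
\[ h\operatorname{Shv}_{\operatorname{cbl}}(\Gamma\backslash \XRBS,\operatorname{LMod}_R)\ \simeq\ h\mathscr{D}(\operatorname{Fun}(\mathscr{C}^{RBS}_{\Gamma},\operatorname{LMod}_R^1)). \]

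Next I would identify each side with the classical $1$-category in the statement. On the functor side, the homotopy category of the derived $\infty$-category $\mathscr{D}(\mathcal{A})$ of a Grothendieck abelian category $\mathcal{A}$ is by construction the classical unbounded derived $1$-category $D(\mathcal{A})$ (see \cite[\S 1.3]{LurieHA} and the discussion opening \Cref{constructible derived category}). Applied to the Grothendieck abelian category $\mathcal{A}=\operatorname{Fun}(\mathscr{C}^{RBS}_{\Gamma},\operatorname{LMod}_R^1)$, this yields exactly $D(\operatorname{Fun}(\mathscr{C}^{RBS}_{\Gamma},\operatorname{LMod}_R^1))$. On the sheaf side, I would invoke the identification recalled in the remark preceding \Cref{examples of constructible complexes of sheaves}: the canonical functor $\mathscr{D}(\operatorname{Shv}_1(\Gamma\backslash \XRBS, R))\to \operatorname{Shv}(\Gamma\backslash \XRBS,\operatorname{LMod}_R)$ is fully faithful with essential image the hypercomplete sheaves; constructible sheaves are hypercomplete by \cite[Proposition A.5.9]{LurieHA}; and under this embedding $\operatorname{Shv}_{\operatorname{cbl}}(\Gamma\backslash \XRBS,\operatorname{LMod}_R)$ corresponds to $\mathscr{D}_{\operatorname{cbl}}(\operatorname{Shv}_1(\Gamma\backslash \XRBS,R))$. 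Passing to homotopy categories, the left-hand side then becomes $D_{\operatorname{cbl}}(\operatorname{Shv}_1(\Gamma\backslash \XRBS, R))$, which is what we want.

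Finally, the restricted equivalence on compact-valued/perfect subcategories follows from the same discussion together with the observation that $\operatorname{Shv}_{\operatorname{cbl,cpt}}$ corresponds to $\mathscr{D}_{\operatorname{cbl,cpt}}$ under the fully faithful embedding above, while the subcategory of perfect-valued functors descends by definition from $\mathscr{D}_{\operatorname{cpt}}(\operatorname{Fun}(\mathscr{C}^{RBS}_{\Gamma},\operatorname{LMod}_R^1))$ to $D_{\operatorname{cpt}}(\operatorname{Fun}(\mathscr{C}^{RBS}_{\Gamma},\operatorname{LMod}_R^1))$. There is no real obstacle in the argument; the only subtle point is to ensure that the $\infty$-categorical equivalence of \Cref{constructible derived category of RBS infinity} matches these stalk-wise finiteness conditions on the two sides, but this is already built into the statement of that theorem (the equivalence is constructed from the stalk-preserving monodromy equivalence $\Psi_X$ of \Cref{exit paths classify constructible sheaves}), so the restriction is automatic.
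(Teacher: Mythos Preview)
Your proposal is correct and follows exactly the route the paper takes: the corollary is obtained simply by passing to homotopy categories in \Cref{constructible derived category of RBS infinity} (equivalently, by specialising \Cref{equivalence of derived 1-categories} to $X=\Gamma\backslash\XRBS$), using the identifications recorded in \Cref{constructible derived category}. Your write-up is more explicit than the paper's one-line ``taking homotopy categories'', but the content is the same.
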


As mentioned earlier in \Cref{examples of constructible complexes of sheaves}, both intersection cohomology of $\Gamma\backslash \XRBS$ and weighted cohomology of $\Gamma$ are examples of constructible compact-valued complexes of sheaves on $\Gamma\backslash \XRBS$ taking values in complex vector spaces, i.e. they are objects of $D_{\operatorname{cbl,cpt}}(\Gamma\backslash \XRBS,\C)$ (\cite[\S 3]{GoreskyMacPherson83} and \cite[Theorem 17.6]{GoreskyHarderMacPherson}). In \cite{Saper05a} and \cite{Saper05b}, Saper introduced the theory of $\mathcal{L}$-modules, a combinatorial analogue of constructible complexes of sheaves on the reductive Borel--Serre compactification. The theory is used to prove a conjecture of Rapoport and Goresky--MacPherson relating the intersection cohomology of certain Satake compactifications with that of the reductive Borel--Serre compactification (\cite{Rapoport,GoreskyMacPherson88}). This allows one to transfer cohomological calculations from the more singular spaces, Satake compactifications, to the reductive Borel--Serre compactification.

\medskip

If one thinks of $\mathscr{L}$-modules as a combinatorial analogue of constructible complexes of sheaves, then the equivalence of \Cref{constructible derived category of RBS} can be interpreted as providing an actual combinatorial incarnation. The precise relationship between these two notions is unfortunately not completely evident from the tools and calculations at hand --- further investigation of this should make the classification of constructible sheaves more explicit and more accessible to possible applications.

\section{Groups acting on posets}\label{Groups acting on posets}

This section does not contain any novel results, but is included in order to give some perspective on the main results of the paper. The category $\mathscr{C}^{RBS}_{\Gamma}$ was defined in \Cref{exit category of rbs} in terms of the poset of rational parabolic subgroups of a reductive algebraic group, their unipotent radicals and the conjugation action of $\Gamma$ on this poset --- it is a special case of the category $\mathscr{C}_{G,X}$ defined in \Cref{exit path categories and group actions} in terms of stabilisers and poset relations for a group acting on a stratified space. The object of interest was the stratified space $\Gamma\backslash\XRBS$ or in the general case a stratified orbit space $G\backslash X$. It is easy to see, however, that these categories make sense in a more general setting of a group acting on a poset, and moreover, that there are well-known examples of these categories (at least their opposites) in the literature. We make this generalisation precise and provide concrete examples.

\subsection{Construction and examples}

We generalise the definition of $\mathscr{C}^{RBS}_{\Gamma}$ to group actions on posets and give several examples of such categories.

\begin{construction}\label{generalisation}
Let $G$ be a group acting on a poset $I$. Let $G_i$ denote the stabiliser of $i\in I$, and suppose we have a choice of subgroup $G_i^\ell\leq G_i$ for every $i\in I$ such that the following conditions hold:
\begin{enumerate}[label=(\roman*)]
\item $G_j^{\ell}\leq G_i^{\ell}$ for all $i\leq j$;
\item ${}^gG_i^{\ell}=G_{g.i}^\ell$ for all $i\in I$, $g\in G$.
\end{enumerate}
We call $G_i^\ell$ the \textit{link subgroup} at $i$. Define a category $\mathscr{C}_{G,I}$ with objects the elements of $I$ and hom-sets
\begin{align*}
\mathscr{C}(i,j)=\{g\in G\mid g.i\leq j\}/G_i^\ell,
\end{align*}
where $G_i^\ell$ acts by right multiplication, and with composition given by multiplication of representatives in $G$. Properties (i) and (ii) imply that this is well-defined.
\exend
\end{construction}

\begin{example}\label{example}
\ 
\begin{enumerate}[label=(\roman*)]
\item Let $X\rightarrow I$ be a Hausdorff stratified space such that $X_i\subset \overline{X_j}$ for all $i\leq j$. Suppose $G$ is a discrete group acting on $X\rightarrow I$. Let for all $i\in I$, $G_i^\ell\leq G_i$ denote the subgroup which fixes $X_i$ pointwise. This recovers the category $\mathscr{C}_{G,X}$ in the situations of \Cref{exit path categories and group actions} and \Cref{exit path categories and group actions II with composition}.

\item For any group $G$ and any collection of subgroups $\mathcal{C}$ which is closed under conjugation, we can view $\mathcal{C}$ as a poset and consider the action of $G$ on $\mathcal{C}$ by conjugation and choose the trivial subgroups as the link subgroups. This recovers the transport category on the collection $\mathcal{C}$.

\item Let $\mathbf{G}$ be a connected linear algebraic group defined over a field $k$ and let $\mathscr{P}$ denote the poset of $k$-parabolic subgroups of $\mathbf{G}$. The group $\mathbf{G}(k)$ acts on $\mathscr{P}$ by conjugation. Let for all $\mathbf{P}\in \mathscr{P}$, $\mathbf{N_P}\leq \mathbf{P}$ denote the unipotent radical and choose the $k$-points of these as the link subgroups: $(\mathbf{G}(k))_{\mathbf{P}}^\ell=\mathbf{N_P}(k)\leq \mathbf{P}(k)\leq (\mathbf{G}(k))_{\mathbf{P}}$, where $(\mathbf{G}(k))_{\mathbf{P}}$ is the normaliser of $\mathbf{P}$ in $\mathbf{G}(k)$.

\item As an extension of the previous example, we can also consider the action of a subgroup $\Gamma\leq \mathbf{G}(k)$ and the restricted subgroups $\Gamma_{\mathbf{P}}^\ell =\Gamma_{\mathbf{N_P}}=\Gamma \cap \mathbf{N_P}(k)$. In the situation of \Cref{rbs definition} and for $\Gamma$ a neat arithmetic group, this recovers the category $\mathscr{C}^{RBS}_{\Gamma}$ of \Cref{exit path category of the reductive borel-serre compactification}. If we choose the trivial subgroups $e\leq \Gamma_{\mathbf{P}}$ as the link subgroups, then we recover $\mathscr{C}^{BS}_{\Gamma}$ of \Cref{exit path category of the borel-serre compactification}. Note that these categories are also recovered in (i) above when considering the action of $\Gamma$ on the partial Borel--Serre respectively partial reductive Borel--Serre compactifications as done in \Cref{rbs}.

\item Let $G=(G,B,N,S,U)$ be a finite group with a split BN-pair of characteristic $p$ (see \cite[\S 69]{CurtisReiner} for details). Let $\mathscr{P}$ denote the collection of parabolic subgroups of $G$, i.e.~the subgroups $P$ containing some conjugate of $B$. Then $G_P=P$ for all $P\in \mathscr{P}$ (\cite[Theorem 65.19]{CurtisReiner}). As link subgroups, consider the maximal normal $p$-subgroups, $O_p(P)\leq P$ (the analogue of the unipotent radical). This recovers (the opposite of) the orbit category on the $p$-radical subgroups of $G$, an object of great interest in finite group theory (see \Cref{orbit categories and p-radical subgroups} below).

\item We can generalise (iii) and (iv) to the case of reductive group schemes: for a reductive group scheme $\mathbf{G}$ over a scheme $S$, consider the poset $\mathscr{P}$ of parabolic subgroups and for each $\mathbf{P}\in \mathscr{P}$, let $\mathbf{N_P}$ denote the unipotent radical of $\mathbf{P}$ (\cite[\S 5.2]{Conrad14}). Any subgroup $\Gamma\leq \mathbf{G}(S)$ acts on $\mathscr{P}$ by conjugation and we can choose the link subgroups $\Gamma_{\mathbf{P}}^\ell=\Gamma_{\mathbf{N_P}}=\Gamma\cap \mathbf{N_P}(S)$ given by the unipotent radicals. 

\item Let $S$ be a surface of finite type and let $\Gamma(S)$ denote the corresponding mapping class group. Consider the poset $\mathcal{P}(S)$ whose elements are isotopy classes of disjoint closed simple curves on $S$ (including the empty set) and whose partial order is given by refinement, i.e. reverse inclusion (this is the opposite of the augmented curve complex and it encodes the natural stratification of the augmented Teichmüller space (see for example \cite{HubbardKoch})). The mapping class group $\Gamma(S)$ acts on $\mathcal{P}(S)$ in the obvious way by applying the homeomorphisms of $S$ to the closed simple curves. Given an element $\sigma\in \mathcal{P}(S)$, let $\Gamma(S)_\sigma^\ell\subset \Gamma(S)_\sigma$ denote the subgroup generated by Dehn twists around the components of $\sigma$. The resulting category is (the opposite of) the Charney--Lee category $\mathcal{CL}(S)$ of \cite{CharneyLee84, EbertGiansiracusa, ChenLooijenga} and we refer to these sources for details. This category is also recovered in an orbifold version of (i) above by considering the action of the mapping class group on the augmented Teichmüller space whose quotient can be identified with the Deligne--Mumford compactification --- a suitable orbifold version of the results of this paper should identify the exit path $\infty$-category of the Deligne--Mumford compactification with (the opposite of) the Charney--Lee category.

\item Let $A$ be an associative ring and $M$ a finitely generated projective $A$-module, and let $\mathscr{F}$ denote the poset of splittable flags of submodules of $M$. The group $GL(M)$ acts on $\mathscr{F}$ by conjugation. For every flag $\mathcal{F}\in \mathscr{F}$, let $(GL(M))_{\mathcal{F}}^\ell$ denote the subgroup of elements preserving $\mathcal{F}$ which induce the identity on the associated graded of $\mathcal{F}$. The resulting category is the category $\operatorname{RBS}(M)$ introduced in \cite{ClausenOrsnesJansen}. If $A$ is commutative and $\operatorname{Spec}(A)$ is connected, then splittable flags correspond to parabolic subgroups of the reductive group scheme $GL(M)$ and the category $\operatorname{RBS}(M)$ coincides with the one in (vi) above (see the discussion in \cite[\S 5]{ClausenOrsnesJansen}). \exend
\end{enumerate}
\end{example}

\subsection{Orbit categories and \texorpdfstring{$p$}{p}-radical subgroups}\label{orbit categories and p-radical subgroups}

The categories defined in \Cref{generalisation} appear in the field of finite group theory as mentioned in \Cref{example} (v) above. We spell out the identification of the resulting category as the orbit category on p-radical subgroups to underline the fact that it appears both in a different setting and in a different incarnation. It is a simple application of the Borel--Tits Theorem.

\medskip

Let $G=(G,B,N,S,U)$ be a finite group with a split BN-pair, and consider the categories $\mathscr{C}^{BS}_G$ respectively $\mathscr{C}^{RBS}_G$ obtained from \Cref{generalisation} by considering the poset of parabolic subgroups of $G$ and as link subgroups the trivial subgroups $e\leq P$ respectively the largest normal $p$-groups $O_p(P)\leq P$ (cf. \Cref{example} (ii) respectively (v)). There is a canonical functor $\mathscr{C}^{BS}_G\rightarrow \mathscr{C}^{RBS}_G$ which is the identity on objects and is given on hom-sets by the quotient maps
\begin{align*}
\{g\in G\mid {}^g P\leq Q\} \longrightarrow \{g\in G\mid {}^gP\leq Q\}/O_p(P).
\end{align*}

\begin{remark}
These categories generalise the exit path categories of the Borel--Serre respectively reductive Borel--Serre compactifications and the functor generalises the one induced by the quotient map $\Gamma\backslash\XBS\rightarrow \Gamma \backslash\XRBS$ as found in \Cref{compatible equivalences}.
\exend
\end{remark}

\begin{definition}
Let $G$ be any finite group and $p$ a prime. A subgroup $U\leq G$ is called $p$-\textit{radical} if the greatest normal $p$-subgroup of the normaliser of $U$ in $G$ is $U$ itself, i.e.~if $O_p(N_G(U))=U$ or equivalently $O_p(N_G(U)/U)=e$.
\defend
\end{definition}

\begin{remark}
The $p$-radical subgroups have been studied extensively in finite group theory: they play an important role in Alperin's weight conjecture \cite{Alperin, AlperinFong} and the poset of $p$-radical subgroups and the orbit category on this collection turn out to be of great significance in group cohomology and homotopy theory of classifying spaces (\cite{Bouc,JackowskiMcClureOliverI,JackowskiMcClureOliverII,Grodal02,Grodal18}).
\exend
\end{remark}

For $G$ a finite group with a split BN-pair of characteristic $p$, let $\mathscr{O}(G)$ denote the orbit category of $G$-orbits and $G$-maps and denote by $\mathcal{B}_p^e(G)$ the collection of $p$-radical subgroups of $G$. Consider the transport category $\mathscr{T}_{\mathcal{B}_p^e(G)}(G)$ on the collection of $p$-radical subgroups of $G$, and the full subcategory $\mathscr{O}_{\mathcal{B}_p^e(G)}(G)\subseteq \mathscr{O}(G)$ spanned by the $G$-orbits whose isotropy group is a $p$-radical subgroup of $G$. There is a canonical functor
\begin{align*}
\mathscr{T}_{\mathcal{B}_p^e(G)}(G) \rightarrow \mathscr{O}_{\mathcal{B}_p^e(G)}(G)
\end{align*}
which sends $P$ to $G/O_p(P)$ and on hom-sets is given by inversion and taking quotients, $g\mapsto [g^{-1}]$:
\begin{align*}
\{g\in G\mid {}^g O_p(Q)\leq O_p(P)\} \rightarrow \{g\in G\mid O_p(Q)^g\leq O_p(P)\}/O_p(P),
\end{align*}
where we use that $\operatorname{Hom}_G(G/H,G/K)\xrightarrow{\ \cong\ }\{g\in G\mid H^g\leq K\}/K$ by sending a $G$-map to its value on the identity coset.

\medskip

The poset of parabolic subgroups of $G$ is $G$-equivalent to the (opposite) poset of $p$-radical subgroups of $G$ by taking normaliser and $O_p$ respectively. This is a well-known fact and a consequence of the Borel--Tits Theorem, which says that if a closed unipotent subgroup $U$ of a connected algebraic group $\mathbf{H}$ is equal to the unipotent radical of its normaliser, then $N_{\mathbf{H}}(U)$ is a parabolic subgroup of $\mathbf{H}$ (see \cite[Corollary 3.2]{BorelTits71} for the general case and \cite{BurgoyneWilliamson} for the analogous result for finite Chevalley groups). The following proposition is a simple application of this fact --- we spell out the steps for clarity (see also for example \cite[Remark 4.3]{Grodal02}).

\begin{proposition}
There is a commutative diagram
\begin{center}
\begin{tikzpicture}
\matrix (m) [matrix of math nodes,row sep=2em,column sep=2em]
  {
	\mathscr{C}^{BS}_G & \mathscr{C}^{RBS}_G \\
	 \mathscr{T}_{\mathcal{B}_p^e(G)}(G)^{op} & \mathscr{O}_{\mathcal{B}_p^e(G)}(G)^{op} \\
  };
  \path[-stealth]
	(m-1-1) edge (m-1-2) edge node[left]{$\Psi$} (m-2-1)
	(m-2-1) edge (m-2-2)
	(m-1-2) edge node[right]{$\Phi$}(m-2-2)
  ;
\end{tikzpicture}
\end{center}
where the horizontal functors are the canonical ones and the vertical ones are isomorphisms given by
\begin{align*}
&\Psi(P)=O_p(P)& &\Psi(g\colon P\rightarrow Q)=g^{-1}\colon O_p(Q)\rightarrow O_p(P), \\
&\Phi(P)=G/O_p(P)& &\Phi([g]\colon P\rightarrow Q)=[g]\colon G/O_p(Q)\rightarrow G/O_p(P).
\end{align*}
\end{proposition}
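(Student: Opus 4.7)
The plan is to build on the order-reversing $G$-equivariant bijection $P \leftrightarrow O_p(P)$ between the poset $\mathscr{P}$ of parabolic subgroups of $G$ and the poset $\mathcal{B}_p^e(G)$ of $p$-radical subgroups (with inverse $U \mapsto N_G(U)$), which follows from the Borel--Tits theorem in the finite form of Burgoyne--Williamson \cite{BurgoyneWilliamson}. The relevant facts to record are $G$-equivariance, ${}^g O_p(P) = O_p({}^g P)$, and order-reversal, $P \leq Q \Rightarrow O_p(P) \geq O_p(Q)$ --- the latter because the unipotent radical of a larger parabolic is contained in that of any smaller one. This order-reversal accounts for the $(-)^{op}$ on the right of the diagram, and the bijection already gives $\Psi$ and $\Phi$ bijectivity on objects.

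For $\Psi$, the equivalence ${}^g P \leq Q \iff {}^{g^{-1}} O_p(Q) \leq O_p(P)$ translates the hom condition in $\mathscr{C}^{BS}_G(P,Q)$ into that of $\mathscr{T}_{\mathcal{B}_p^e(G)}(G)^{op}(O_p(P), O_p(Q))$, yielding a bijection $g \mapsto g^{-1}$ on hom-sets (the reverse direction uses $N_G(-)$ in place of $O_p(-)$). Functoriality with respect to opposite composition reduces to $\Psi(hg) = (hg)^{-1} = g^{-1}h^{-1}$. For $\Phi$, the same condition rewritten as ${}^g P \leq Q \iff O_p(Q)^g \leq O_p(P)$ (with $H^g = g^{-1}Hg$) is exactly what is needed for $xO_p(Q) \mapsto xgO_p(P)$ to define a $G$-map $G/O_p(Q) \to G/O_p(P)$; the right-multiplication $O_p(P)$-ambiguity matches the $O_p(P)$-quotient in $\mathscr{C}^{RBS}_G(P,Q)$, and functoriality in $\mathscr{O}^{op}$ mirrors the forward composition in $\mathscr{C}^{RBS}_G$. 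Together with the object bijection, this makes $\Psi$ and $\Phi$ isomorphisms of categories.

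For commutativity of the square, I would unwind the bottom arrow: the canonical functor $\mathscr{T}^{op} \to \mathscr{O}^{op}$ sends a morphism $h$ in $\mathscr{T}^{op}(U,V)$ (i.e., ${}^h V \leq U$) to the $G$-map $[h^{-1}] \colon G/U \to G/V$ determined by $eU \mapsto h^{-1}V$, the inversion being necessary for well-definedness. Tracing $g \in \mathscr{C}^{BS}_G(P,Q)$: along the top it becomes $[g]$ in $\mathscr{O}^{op}$ via $\Phi$, while along the bottom it becomes $g^{-1}$ via $\Psi$ and then $[(g^{-1})^{-1}] = [g]$ via the canonical functor. The only substantive input is the Borel--Tits bijection; the principal obstacle is bookkeeping the two conjugation conventions ${}^g H$ vs.\ $H^g$ and the inversions introduced by passing to opposite categories, after which the square collapses to the tautology $[g] = [g]$.
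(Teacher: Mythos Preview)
Your proof is correct and follows essentially the same approach as the paper: both rest on the Borel--Tits bijection $P \leftrightarrow O_p(P)$ between parabolics and $p$-radical subgroups, and both reduce bijectivity on hom-sets to the key equality $\{g\in G \mid {}^g P \leq Q\} = \{g\in G \mid O_p(Q)^g \leq O_p(P)\}$. The only difference in emphasis is that the paper justifies this equality by reducing to the case $P\leq Q$ (where both sides equal $Q$, citing \cite[Lemma~4.2]{Grodal02} and \cite[Theorem~65.19]{CurtisReiner}), whereas you deduce it directly from the poset anti-isomorphism; these are equivalent, and you are in addition more explicit than the paper about verifying functoriality and the commutativity of the square.
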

\begin{proof}
The functors $\Phi$ and $\Psi$ are well-defined as $N_G(O_p(P))=P$ for all parabolic subgroups $P$ (\cite[Theorem 69.10]{CurtisReiner}). They are bijective on objects by the Borel--Tits theorem. To see that they are bijective on hom-sets, note that
\begin{align}
\{g\in G \mid  O_p(Q)^g\leq O_p(P)\}=\{g\in G\mid {}^g P\leq Q\},
\end{align}
since in the case where $P\leq Q$, both sets are equal to $Q$ (this is seen in the proof of \cite[Lemma 4.2]{Grodal02} and also in \cite[Theorem 65.19]{CurtisReiner}).
\end{proof}

\appendix
                                                                                                                                                                                                                                                                                                     
\section{Homotopy links and fibrations}\label{appendixA}

We provide proofs of the two fundamental results on homotopy links used in \Cref{exitcat}.  These are elementary point-set topological proofs and the results are well-known. We include them for the sake of self-containment, and since the proofs that we have been able to locate in the literature work in much more general or slightly different settings. It also clarifies why we impose metrisability conditions on the stratified spaces.

\medskip

Throughout this appendix, we write $I=[0,1]$ for the unit interval to ease notation. This should not be confused with the posets $I$ appearing in the main body of the paper.

\medskip

We recall the definition of the homotopy link, also given in \Cref{exitcat}: let $X$ be a topological space and $Y\subseteq X$ a subspace. The \textit{homotopy link} of $Y$ in $X$ is defined as the following subspace of paths
\begin{align*}
H(X,Y)=\{\gamma\colon I\rightarrow X\mid \gamma(0)\in Y,\ \gamma((0,1])\subseteq X-Y\}\subset C(I,X)
\end{align*}
equipped with the compact-open topology.

\medskip

This is a notion from the theory of homotopically stratified sets introduced by Quinn in \cite{Quinn88} in order to study purely topological stratified phenomena: a filtered space 
\begin{align*}
X_0\subset X_1 \subset \cdots \subset X_n
\end{align*}
is \textit{homotopically stratified} if for all $k>i$, the subspace $X_i-X_{i-1}$ has a ``homotopically well-behaved'' neighbourhood in $(X_k-X_{k-1})\cup (X_i-X_{i-1})$ (i.e.~is tame, see \Cref{tame remark}) and the evaluation at zero map from the homotopy link of this pair is a fibration. These conditions provide a homotopical replacement of mapping cylinder neighbourhoods, the homotopy link being an analogue of the frontier of such a mapping cylinder neighbourhood (see also \cite{Quinn02}).

\subsection{End point evaluation fibrations}

We show that for a suitably nice pair of spaces $(X,Y)$, the end point evaluation map $H(X,Y)\rightarrow X\times (X-Y)$ is a fibration. The following lemma explains our need to impose metrisability conditions on the stratified spaces that we consider.

\begin{lemma}\label{retricting to a path within N}
Let $X$ be a metrisable space, $Y\subseteq X$ a subspace, and $U$ an open neighbourhood of $Y$ in $X$. There is a continuous map $\delta\colon H(X,Y)\rightarrow (0,1)$ such that for all $\gamma\in H(X,Y)$, we have $\gamma([0,\delta(\gamma)])\subseteq U$ .
\end{lemma}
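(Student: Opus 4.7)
The plan is to build $\delta$ as a convex combination of constants via a partition of unity subordinate to a natural exhaustion of $H(X,Y)$ by open sets on which ``safe exit times'' are uniformly controlled.

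First I would fix a metric $d$ on $X$ and note that $C(I,X)$ with the compact-open topology is then metrised by the sup-metric $d_\infty(\gamma,\gamma') = \sup_{t\in I} d(\gamma(t),\gamma'(t))$, so its subspace $H(X,Y)$ is metrisable and in particular paracompact. This is the only place where metrisability of $X$ is used.

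Next, for each integer $n\geq 1$, I would introduce
\begin{align*}
V_n = \{\gamma\in H(X,Y)\mid \gamma([0,1/n])\subseteq U\}.
\end{align*}
Using that $[0,1/n]$ is compact and $U$ open, $V_n$ is open in the compact-open topology (it is the intersection of $H(X,Y)$ with the subbasic set $S([0,1/n],U)$). The sets form an increasing sequence $V_1\subseteq V_2\subseteq\cdots$. Moreover $\bigcup_n V_n = H(X,Y)$: for any $\gamma\in H(X,Y)$ we have $\gamma(0)\in Y\subseteq U$, and continuity of $\gamma$ together with openness of $U$ provides some $\varepsilon>0$ with $\gamma([0,\varepsilon])\subseteq U$, so $\gamma\in V_n$ for any $n\geq 1/\varepsilon$.

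By paracompactness I would then pick a locally finite partition of unity $\{\psi_n\}_{n\geq 1}$ subordinate to $\{V_n\}$, and define
\begin{align*}
\delta(\gamma) = \sum_{n\geq 1} \frac{\psi_n(\gamma)}{n+1}.
\end{align*}
This is a locally finite sum of continuous functions, hence continuous, and manifestly takes values in $(0,1)$ (the lower bound comes from $\sum_n\psi_n=1$, and the sum is bounded above by $\tfrac{1}{2}$). To verify the containment property, for each $\gamma$ let $n_\star(\gamma)$ denote the smallest $n$ with $\psi_n(\gamma)>0$; the support condition gives $\gamma\in V_{n_\star}$, i.e.\ $\gamma([0,1/n_\star])\subseteq U$. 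Since all terms in the sum with $\psi_n(\gamma)>0$ satisfy $n\geq n_\star$, one has $\delta(\gamma)\leq 1/(n_\star+1)<1/n_\star$, so $\gamma([0,\delta(\gamma)])\subseteq \gamma([0,1/n_\star])\subseteq U$.

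The main obstacle is not really conceptual but notational, namely verifying that $V_n$ is open in $H(X,Y)$ and that the sum defining $\delta$ bounds the ``safe'' time pointwise; once the cover $\{V_n\}$ is in place, the partition-of-unity trick handles everything. The role of metrisability of $X$ is pinned down precisely at the first step, where it guarantees that $H(X,Y)$ is paracompact so that the subordinate partition of unity exists.
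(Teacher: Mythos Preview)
Your proof is correct and follows essentially the same approach as the paper: both use metrisability of $X$ to deduce that $H(X,Y)$ admits partitions of unity, cover $H(X,Y)$ by open sets of the form $\{\eta : \eta([0,t])\subseteq U\}$ for suitable constants $t$, and define $\delta$ as a convex combination of these constants via a subordinate partition of unity. The only cosmetic difference is that the paper indexes the cover by points $\gamma\in H(X,Y)$ with individually chosen $\delta_\gamma$, whereas you use the countable cover $\{V_n\}$ with $t=1/n$; your version is slightly tidier but the argument is the same.
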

\begin{proof}
The homotopy link $H(X,Y)$ admits partitions of unity, being a subspace of a metrisable space $C(I,X)$ and thus itself metrisable. For any $\gamma\in H(X,Y)$, let $\delta_{\gamma}\in (0,1)$ such that $\gamma([0,\delta_{\gamma}])\subseteq U$. The subset
\begin{align*}
U_{\gamma}:=C([0,\delta_{\gamma}],U)\cap H(X,Y)=\{\eta\in H(X,Y)\mid \eta([0,\delta_{\gamma}])\subseteq U\}
\end{align*}
is an open neighbourhood of $\gamma$ in $H(X,Y)$. Let $\{\rho_{\gamma}\}$ be a partition of unity subordinate to the cover $\{U_{\gamma}\}$ of $H(X,Y)$, and define $\delta\colon H(X,Y)\rightarrow (0,1)$ as $\delta=\sum_\gamma \delta_\gamma\rho_\gamma$.
\end{proof}

\begin{proposition}\label{e_0 fibration top pair}
Let $X$ be a metrisable space, $Y\subseteq X$ a subspace, and suppose there is an open neighbourhood $N$ of $Y$ in $X$ such that the evaluation at zero map $H(N,Y)\rightarrow Y$ is a fibration. Then the evaluation at zero map $e_0\colon H(X,Y)\rightarrow Y$ is a fibration.
\end{proposition}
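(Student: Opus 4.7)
The plan is to realise the end point evaluation $e_0\colon H(X,Y)\to Y$ as fibrewise equivalent over $Y$ to a map that visibly is a fibration, and then invoke the standard fact that a fibre homotopy equivalence over $Y$ transfers the fibration property. Applying \Cref{retricting to a path within N}, we fix a continuous function $\delta\colon H(X,Y)\to (0,1)$ with $\gamma([0,\delta(\gamma)])\subseteq N$ for every $\gamma\in H(X,Y)$.

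Consider the fibre product
\begin{align*}
T:=H(N,Y)\times_{X-Y}C(I,X-Y)
\end{align*}
taken along the evaluation maps $\gamma\mapsto\gamma(1)$ and $\eta\mapsto\eta(0)$. The projection $T\to H(N,Y)$ is a pullback of the path-space fibration $\operatorname{ev}_0\colon C(I,X-Y)\to X-Y$, hence a fibration, and composing with the hypothesised fibration $H(N,Y)\to Y$ exhibits $q\colon T\to Y$, $(\gamma,\eta)\mapsto\gamma(0)$, as a fibration.

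Introduce the concatenation map $c\colon T\to H(X,Y)$, $c(\gamma,\eta)(t)=\gamma(2t)$ for $t\le 1/2$ and $\eta(2t-1)$ for $t\ge 1/2$, and the splitting map $s\colon H(X,Y)\to T$, $s(\zeta)=(\gamma_\zeta,\eta_\zeta)$ with $\gamma_\zeta(t)=\zeta(\delta(\zeta)t)$ and $\eta_\zeta(t)=\zeta(\delta(\zeta)+(1-\delta(\zeta))t)$; both maps cover $\operatorname{id}_Y$. The composite $c\circ s$ sends $\zeta$ to a reparametrisation of $\zeta$, and linearly interpolating the reparametrising function between the identity and this reparametrisation gives a fibre homotopy $c\circ s\simeq\operatorname{id}_{H(X,Y)}$ over $Y$, since each interpolant fixes $0$ and $1$ and is strictly positive on $(0,1]$. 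Analogously, $s\circ c$ corresponds to cutting $\gamma*\eta$ at $\delta(\gamma*\eta)$ rather than at $1/2$; linearly interpolating the cut point between $1/2$ and $\delta(\gamma*\eta)$ produces a fibre homotopy $s\circ c\simeq\operatorname{id}_T$ over $Y$, using that the initial segment $(\gamma*\eta)([0,\mu])$ lies in $N$ both for $\mu\le 1/2$ (where it equals $\gamma([0,2\mu])\subseteq N$ since $\gamma\in H(N,Y)$) and for $\mu\le\delta(\gamma*\eta)$ (by the defining property of $\delta$).

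Thus $c$ is a fibre homotopy equivalence over $Y$. Since $q\colon T\to Y$ is a fibration and $e_0=q\circ s$ is fibrewise equivalent to $q$ over $Y$, the map $e_0\colon H(X,Y)\to Y$ is itself a fibration, as this property is preserved under fibre homotopy equivalence over the base (a classical result of Dold). The main obstacle is establishing the second fibre homotopy, where we must confirm that the intermediate splittings genuinely land in $T$; this is the content of the case analysis on $\mu$ relative to $1/2$, and it is the only place where the metrisability hypothesis enters, via its use in \Cref{retricting to a path within N}.
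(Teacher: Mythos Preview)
Your decomposition via $T=H(N,Y)\times_{X-Y}C(I,X-Y)$ and the maps $c,s$ together with the two fibrewise homotopies are correct and conceptually clean. The gap is in the last sentence: it is \emph{not} true that a map fibre homotopy equivalent over the base to a Hurewicz fibration is itself a Hurewicz fibration, and this is not what Dold proved. A small counterexample: take $E=([0,1]\times\{0\})\cup(\{0\}\times[0,1])\subset\R^2$ with $p(x,y)=x$ onto $B=[0,1]$. The maps $s=p$ and $c(x)=(x,0)$ together with $H((x,y),u)=(x,uy)$ exhibit a fibre homotopy equivalence over $B$ between $p$ and the identity fibration, yet the path $t\mapsto t$ cannot be lifted starting at $(0,1)$. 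What your argument does establish is that $e_0$ is a Dold fibration (has the weak covering homotopy property), which suffices for the long exact sequence but is not the stated conclusion.

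The paper's proof uses essentially the same splitting of paths but avoids the issue by constructing the lift directly. The crucial difference is that the paper concatenates not at the fixed point $1/2$ but at the \emph{variable} point $\delta(\alpha_0(a))$: with this choice the splitting $s$ followed by concatenation returns $\alpha_0(a)$ on the nose, not merely up to fibrewise homotopy. Concretely, one lifts $\alpha$ through the fibration $H(N,Y)\to Y$ starting at $\gamma_\zeta$, then uses the homotopy extension property of the pair $(A\times I,\,A\times\{0\})$ applied to the endpoint-matching condition to extend $\eta_\zeta$ compatibly, and finally glues the two pieces back together at $\delta(\alpha_0(a))$. Your framework contains all the ingredients for this; you only need to replace the appeal to Dold by this explicit two-stage lift.
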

\begin{proof}
Let $A$ be a topological space, and let $\alpha_0$ and $\alpha$ as in the following diagram.
\begin{center}
\begin{tikzpicture}
\matrix (m) [matrix of math nodes,row sep=2em,column sep=2em]
  {
     A & H(X,Y) \\
     A\times I & Y \\
  };
  \path[-stealth]
	(m-1-1) edge node[above]{$\alpha_0$} (m-1-2)
	(m-2-1) edge node[below]{$\alpha$} (m-2-2)
	(m-1-1) edge (m-2-1)	
	(m-1-2) edge node[right]{$e_0$} (m-2-2)
  ;
  \path[dashed,-stealth]
  (m-2-1) edge node[above left]{$\tilde{\alpha}$} (m-1-2)
  ;
\end{tikzpicture}
\end{center}

Let $\delta\colon H(X,Y)\rightarrow (0,1)$ be as in \Cref{retricting to a path within N} for $U=N$. For any $\gamma\in H(X,Y)$ and any $0\leq r<s\leq 1$, let $\gamma_{[r,s]}\colon I\rightarrow X$ denote the reparametrisation of the restriction of $\gamma$ to $[r,s]$ and define continuous maps
\begin{align*}
&R\colon H(X,Y)\rightarrow H(N,Y), \qquad\phantom{mm}\gamma\mapsto \gamma_{[0,\delta(\gamma)]}, \\
&\overline{R}\colon H(X,Y)\rightarrow C(I,X-Y), \qquad \gamma\mapsto \gamma_{[\delta(\gamma),1]}. 
\end{align*}
By assumption, $e_0\colon H(N,Y)\rightarrow Y$ is a fibration, so there is a map $\hat{\alpha}\colon A\times I\rightarrow H(N,Y)$ such that $e_0\circ\hat{\alpha}=\alpha$ and $\hat{\alpha}(-,0)=R(\alpha_0(-))$.

\medskip

Consider the diagram below with $\eta$ given by
\begin{align*}
\eta(-,-,0)=\hat{\alpha}(-,-)(1),\quad\text{and}\quad \eta(-,0,-)=\overline{R}\circ\alpha_0,
\end{align*}
where $\overline{R}\circ \alpha_0\colon A\rightarrow C(I,X-Y)$ is viewed as a map $A\times I\rightarrow X-Y$ via the exponential law. The map $\hat{\eta}$ is an extension of $\eta$, using that the pair $(A\times I, A\times \{0\})$ has the homotopy extension property.

\begin{center}
\begin{tikzpicture}
\matrix (m) [matrix of math nodes,row sep=2em,column sep=2em]
  {
     A\times (I\times \{0\}\cup \{0\}\times I) & X-Y \\
     A\times I\times I & \\
  };
  \path[-stealth]
	(m-1-1) edge node[above]{$\eta$} (m-1-2) edge node[left]{$\iota$} (m-2-1)
  ;
  \path[dashed,-stealth]
  (m-2-1) edge node[below right]{$\hat{\eta}$} (m-1-2)
  ;
\end{tikzpicture}
\end{center}

We can view $\hat{\eta}$ as a map $A\times I\rightarrow C(I,X-Y)$ by applying the exponential law to the second factor of $I$, and we define $\tilde{\alpha}\colon A\times I\rightarrow H(X,Y)$ as the vertical concatenation of $\hat{\alpha}$ and $\hat{\eta}$:
\begin{align*}
\tilde{\alpha}(a,s)(t)=\begin{cases}
\hat{\alpha}(a,s)\big(\tfrac{t}{\delta(\alpha_0(a))}\big) & t\in [0,\,\delta(\alpha_0(a))] \\
\hat{\eta}(a,s)\bigg(\tfrac{t-\delta(\alpha_0(a))}{1-\delta(\alpha_0(a))}\bigg) &  t\in [\delta(\alpha_0(a)),\,1]
\end{cases}
\end{align*}

This is the desired lift.
\end{proof}

\begin{corollary}\label{e fibration top pair}
Let $X$ be a metrisable space, $Y\subseteq X$ a subspace, and suppose there is a neighbourhood $N$ of $Y$ in $X$ such that the evaluation at zero map $H(N,Y)\rightarrow Y$ is a fibration. Then the end point evaluation map $e=e_0\times e_1$ is a fibration:
\begin{align*}
e\colon H(X,Y)\longrightarrow Y\times (X-Y),\quad \gamma\mapsto (\gamma(0),\gamma(1)).
\end{align*}
\end{corollary}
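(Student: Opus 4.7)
The plan is to adapt the argument of \Cref{e_0 fibration top pair} so that the lift also satisfies the right-endpoint condition. Given a lifting problem $\alpha_0\colon A\to H(X,Y)$ together with a compatible homotopy $(\beta,\gamma)\colon A\times I\to Y\times X$, note first that the initial condition $e\circ\alpha_0=(\beta,\gamma)|_{A\times\{0\}}$ forces $\gamma$ to land in $X-Y$, since the image of $e$ is contained in $Y\times(X-Y)$.

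Following the proof of \Cref{e_0 fibration top pair}, I would apply \Cref{retricting to a path within N} to (the interior of) $N$ to obtain a continuous splitting $\delta\colon H(X,Y)\to(0,1)$ and reparametrisations $R(\eta)=\eta_{[0,\delta(\eta)]}\in H(N,Y)$ and $\bar R(\eta)=\eta_{[\delta(\eta),1]}\in C(I,X-Y)$. The hypothesis that $e_0\colon H(N,Y)\to Y$ is a fibration then yields a lift $\hat\beta\colon A\times I\to H(N,Y)$ of $\beta$ through $R\circ\alpha_0$, which takes care of the left endpoint condition and of the initial segment of each lifted path.

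The main obstacle, and the only real novelty compared to \Cref{e_0 fibration top pair}, is constructing a compatible ``tail'', i.e.~a continuous map $\tilde H\colon A\times I\times I\to X-Y$ prescribed on three sides of the square by
\begin{equation*}
\tilde H(a,0,t)=\bar R(\alpha_0(a))(t),\quad \tilde H(a,s,0)=\hat\beta(a,s)(1),\quad \tilde H(a,s,1)=\gamma(a,s).
\end{equation*}
These three pieces agree at the two corners $(s,t)\in\{(0,0),(0,1)\}$ where they meet, and so glue to a continuous map $A\times S\to X-Y$ with $S=\{0\}\times I\cup I\times\{0\}\cup I\times\{1\}\subset I\times I$; the subtlety is extending across the missing open edge $\{1\}\times(0,1)$ while remaining in $X-Y$, where we have no ambient homotopy to leverage. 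This is handled by the observation that $S$ is a closed retract of $I\times I$, so precomposing with $\operatorname{id}_A$ times any retraction $I\times I\to S$ delivers the required $\tilde H$.

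The desired lift $\tilde\alpha\colon A\times I\to H(X,Y)$ is then obtained by concatenating, for each $(a,s)$, the path $\hat\beta(a,s)$ rescaled to $[0,\delta(\alpha_0(a))]$ with $\tilde H(a,s,-)$ rescaled to $[\delta(\alpha_0(a)),1]$. A routine verification then confirms $\tilde\alpha(a,0)=\alpha_0(a)$, $\tilde\alpha(a,s)(0)=\beta(a,s)$, $\tilde\alpha(a,s)(1)=\gamma(a,s)$, and that $\tilde\alpha(a,s)\in H(X,Y)$, the starting point lying in $Y$ while the values for $t>0$ lie in $X-Y$, either because $\hat\beta(a,s)\in H(N,Y)$ on the initial piece or by construction of $\tilde H$ on the tail.
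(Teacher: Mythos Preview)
The paper leaves this as an exercise in ``concatenation and reparametrisation of homotopies'', and your approach is exactly the intended one: you follow the template of \Cref{e_0 fibration top pair} and add the right-endpoint constraint, handling the resulting three-sided boundary condition on $I\times I$ via a retraction onto $S=\{0\}\times I\cup I\times\{0\}\cup I\times\{1\}$. The argument is correct.

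One minor point: the claim that the initial condition forces all of $\gamma$ to land in $X-Y$ is not justified as written---the compatibility $e\circ\alpha_0=(\beta,\gamma)|_{A\times\{0\}}$ only constrains $\gamma|_{A\times\{0\}}$. In fact $e\colon H(X,Y)\to Y\times X$ is generally \emph{not} a fibration (take $X=[0,1]$, $Y=\{0\}$, and a homotopy dragging the right endpoint down to $0$; no lift can exist since any element of $H(X,Y)$ has $\gamma(1)>0$). The statement should be read with target $Y\times(X-Y)$, which is precisely how it is used in \Cref{e fibration conically stratified space}. With that reading your proof goes through verbatim.
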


We leave the proof of this as an exercise for someone wanting to practice concatenation and reparametrisation of homotopies. The strategy is as follows: given a path
\begin{align*}
(p_Y,p_{X-Y})\colon I\rightarrow Y\times (X-Y),
\end{align*}
we can apply \Cref{e_0 fibration top pair} and lift $p_Y$ along $e_0$ with specified starting point $p_0\in H(X,Y)$, resulting in a family $p_s\in H(X,Y)$, $s\in I$; we then concatenate each path $p_s$ with
\begin{itemize}
\item the path $t\mapsto p_{1-t}(1)$ restricted to $[1-s,1]$ (this runs back along the end points of the paths $p_t$ for $t\leq s$),
\item and the restriction of the given path $p_{X-Y}$ to $[0,s]$.
\end{itemize}
This strategy generalises to families of paths. See also \cite{Woolf} for more related fibrations (specifically Lemma 3.5).

\subsection{Homotopy links and mapping cylinder neighbourhoods}

We show that when we are only interested in the homotopical information, the homotopy link provides an adequate replacement for the link space or link bundle. For more details, see \cite{Quinn88}, in particular Lemma 2.4 and its corollary, or \cite{Quinn02}.

\begin{definition}\label{nearly strict deformation retraction}
Let $(N,Y)$ be a pair of spaces. A map $r\colon N\times I\rightarrow N$ is a \textit{nearly strict deformation retraction} into $Y$ if it satisfies:
\begin{enumerate}[label=(\roman*)]
\item $r(-,1)=\operatorname{id}$;
\item $r(N,0)\subseteq Y$;
\item $r(N-Y,t)\subseteq N-Y$ for all $t>0$;
\item $r(y,t)=y$ for all $y\in Y$, $t\in I$. \defend
\end{enumerate}

\end{definition}

\begin{remark}\label{tame remark}
The `nearly strict' refers to the fact that $r$ preserves the pair $(N,Y)$ until the very last moment at $t=0$, when everything is pushed into $Y$. In the setting of homotopically stratified sets, a subspace $Y\subseteq X$ is called tame if there exists a neighbourhood of $Y$ in $X$ equipped with a nearly strict deformation retraction (\cite{Quinn88}). 
\exend
\end{remark}

A nearly strict deformation retraction $r\colon N\times I\rightarrow N$ into a subspace $Y\subset N$ defines a continuous map $\Psi\colon N-Y \rightarrow H(N,Y)$, sending a point $x\in N-Y$ to the path $p_x\colon t\mapsto r(x,t)$ tracing the image of $x$ under $r$.

\begin{lemma}
Let $Y\subseteq N$ be a pair of topological spaces and suppose there is a nearly strict deformation retraction $r\colon N\times I\rightarrow N$ into $Y$. Then the map $\Psi\colon N-Y\rightarrow H(N,Y)$, $x\mapsto p_x$, defined above is a homotopy equivalence with homotopy inverse given by evaluation at $1$, $e_1\colon H(N,Y)\rightarrow N-Y$.
\end{lemma}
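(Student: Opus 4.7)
The identity $e_1\circ\Psi=\operatorname{id}_{N-Y}$ is immediate from property (i) of the nearly strict deformation retraction: $(e_1\circ\Psi)(x)=p_x(1)=r(x,1)=x$. All the work lies in producing a homotopy $H\colon H(N,Y)\times I\to H(N,Y)$ between $\operatorname{id}_{H(N,Y)}$ and $\Psi\circ e_1$, where $(\Psi\circ e_1)(\gamma)=p_{\gamma(1)}$ is the path $t\mapsto r(\gamma(1),t)$.

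The naive two-parameter candidate $(\gamma,s,t)\mapsto r(\gamma((1-s)t+s),(1-s)+st)$ equals $\gamma$ at $s=0$ and $p_{\gamma(1)}$ at $s=1$, but fails to land in $H(N,Y)$ at intermediate times: at $t=0$ it evaluates to $r(\gamma(s),1-s)$, which by property (iii) lies in $N-Y$ for $s\in(0,1)$, violating the condition that paths in $H(N,Y)$ start in $Y$. To stay inside $H(N,Y)$ one needs $\gamma\bigl(\phi_s(0)\bigr)\in Y$ or $\psi_s(0)=0$ at every intermediate $s$, so a continuous two-parameter reparametrisation must pass through the corner $(0,0)$ in the $(\phi_s(0),\psi_s(0))$-square. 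I will therefore build $H$ as a two-stage concatenation:
\begin{align*}
H(\gamma,s)(t)=\begin{cases} r\bigl(\gamma(t),\,1-2s(1-t)\bigr) & s\in[0,\tfrac{1}{2}], \\ r\bigl(\gamma\bigl(t+(2s-1)(1-t)\bigr),\,t\bigr) & s\in[\tfrac{1}{2},1]. \end{cases}
\end{align*}
The two pieces agree at $s=\tfrac{1}{2}$, both giving $r(\gamma(t),t)$, so $H$ is well-defined; and at $s=0$, $s=1$ it recovers $\gamma$ and $p_{\gamma(1)}$ respectively.

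The key step is to verify that for each $(\gamma,s)$ the path $H(\gamma,s)$ genuinely lies in $H(N,Y)$. In the first stage, $H(\gamma,s)(0)=r(\gamma(0),1-2s)$, which equals $\gamma(0)\in Y$ by property (iv) (since $\gamma(0)\in Y$), while for $t>0$ one has $1-2s(1-t)>0$ and $\gamma(t)\in N-Y$, so property (iii) keeps the value in $N-Y$. In the second stage, $H(\gamma,s)(0)=r(\gamma(2s-1),0)\in Y$ by property (ii), and for $t>0$ the first coordinate $\gamma(t+(2s-1)(1-t))$ lies in $N-Y$ while the second coordinate is $t>0$, so again (iii) applies. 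Continuity of $H$ as a map into $H(N,Y)$ reduces, via the exponential law, to continuity of $H(\gamma,s)(t)$ as a function on $H(N,Y)\times I\times I$, which follows from the joint continuity of the evaluation map $H(N,Y)\times I\to N$ (compact-open topology on a metric target) composed with the continuous reparametrisations and $r$.

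The main obstacle, as indicated, is the failure of the straight-line reparametrisation: it is precisely the requirement that $H(\gamma,s)$ remain in the homotopy link for all $s$ that forces the two-stage construction and the invocation of all three properties (ii), (iii), (iv) of a nearly strict deformation retraction. Everything else is routine verification.
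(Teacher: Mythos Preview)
Your proof is correct and follows exactly the same approach as the paper: the same two-stage homotopy (your formula $t+(2s-1)(1-t)$ in the second stage is algebraically identical to the paper's $2(t-ts+s)-1$), with the same matching at $s=\tfrac12$ via $r(\gamma(t),t)$. You supply the verification that $H$ lands in $H(N,Y)$ for all $s$, which the paper leaves implicit; one minor remark is that metrisability is not needed for the continuity step, since the evaluation map $C(I,N)\times I\to N$ is continuous simply because $I$ is locally compact Hausdorff.
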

\begin{proof}
We first note that $e_1\circ \Psi=\operatorname{id}$. The map $H\colon H(N,Y)\times I\rightarrow H(N,Y)$ given by
\begin{align*}
H(\gamma,s)(t)=\begin{cases}
r(\gamma(t),1+2s(t-1)), & s\leq \tfrac{1}{2}\\
r(\gamma(2(t-ts+s)-1),t), & s\geq \tfrac{1}{2}
\end{cases}
\end{align*}
provides a homotopy $\operatorname{id} \sim \Psi \circ e_1$.
\end{proof}

\begin{lemma}
Let $X$ be a metrisable space, $Y\subseteq X$ a subspace, and $N$ an open neighbourhood of $Y$. The inclusion $\iota\colon H(N,Y)\rightarrow H(X,Y)$ is a homotopy equivalence.
\end{lemma}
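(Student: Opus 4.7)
The plan is to construct a deformation retraction of $H(X,Y)$ onto (the image of) $H(N,Y)$ using the continuous function $\delta\colon H(X,Y)\to (0,1)$ produced by the preceding lemma applied to $U=N$, which has the property that $\gamma([0,\delta(\gamma)])\subseteq N$ for every $\gamma\in H(X,Y)$.

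First, I would define a map $r\colon H(X,Y)\to H(N,Y)$ by the reparametrisation formula
\begin{align*}
r(\gamma)(t)=\gamma(t\,\delta(\gamma)).
\end{align*}
This lands in $H(N,Y)$ because $r(\gamma)([0,1])=\gamma([0,\delta(\gamma)])\subseteq N$, the starting point still lies in $Y$, and $\gamma((0,\delta(\gamma)])\subseteq X-Y$ since $\delta(\gamma)>0$. Continuity of $r$ follows from continuity of $\delta$ together with joint continuity of the evaluation map $C(I,X)\times I\to X$, which holds because $I$ is locally compact Hausdorff.

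Next, I would define a homotopy $H\colon H(X,Y)\times I\to H(X,Y)$ by
\begin{align*}
H(\gamma,s)(t)=\gamma\bigl(t(1-s(1-\delta(\gamma)))\bigr).
\end{align*}
At $s=0$ this is $\gamma$ and at $s=1$ it is $\iota(r(\gamma))$. To check that $H(\gamma,s)\in H(X,Y)$ for all $s$, note that $H(\gamma,s)(0)=\gamma(0)\in Y$, and for $t>0$ the reparametrisation factor $1-s(1-\delta(\gamma))$ is bounded below by $\delta(\gamma)>0$, so $t(1-s(1-\delta(\gamma)))>0$ and thus $H(\gamma,s)(t)\in X-Y$. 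Hence $\iota\circ r\simeq \operatorname{id}_{H(X,Y)}$.

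For the other composite, I would use the same formula restricted to $\gamma\in H(N,Y)$ (with $\delta$ evaluated on $\iota(\gamma)$). Every value of the homotopy then lies in $N$, since $\gamma([0,1])\subseteq N$ already and the formula only reparametrises, so we obtain a homotopy in $H(N,Y)$ from $\operatorname{id}$ to $r\circ\iota$. The only real obstacle is confirming that all the reparametrised paths stay within the prescribed subspace of the path space, which is immediate from $\delta(\gamma)>0$ and the defining inclusion $\gamma([0,\delta(\gamma)])\subseteq N$; the metrisability hypothesis on $X$ is used only to invoke the preceding lemma that produces $\delta$.
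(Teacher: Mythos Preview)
Your proof is correct and essentially identical to the paper's: both use the same retraction $r(\gamma)(t)=\gamma(t\,\delta(\gamma))$ and the same homotopy, since your factor $1-s(1-\delta(\gamma))$ equals the paper's $s\delta(\gamma)+1-s$. The paper likewise obtains the second homotopy by restricting the same formula to $H(N,Y)$.
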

\begin{proof}
Let $\delta\colon H(X,Y)\rightarrow (0,1)$ be as in \Cref{retricting to a path within N} for $U=N$, and define a map
\begin{align*}
G\colon H(X,Y)\times I\rightarrow H(X,Y),\qquad G(\gamma,s)(t)=\gamma(t(s\delta(\gamma) + 1-s))=\gamma_{[0,s\delta(\gamma) +1-s]}(t).
\end{align*}
Then $G_1=G(-,1)$, $\gamma\mapsto \gamma_{[0,\delta(\gamma)]}$, is a homotopy inverse to $\iota$ with $G$ providing the desired homotopies $G\colon \operatorname{id}_{H(X,Y)}\sim \iota\circ G_1$ and $G\circ \iota\colon \operatorname{id}_{H(N,Y)}\sim G_1\circ \iota$.
\end{proof}

Composing the homotopy equivalences from the above two lemmas, we have the following result.

\begin{proposition}\label{holink homotopy equivalent to neighbourhood}
Let $X$ be a metrisable space, and $Y\subseteq X$ a subspace. Suppose there is an open neighbourhood $N$ of $Y$ equipped with a nearly strict deformation retraction $r\colon N\times I \rightarrow N$. Then the map $N-Y\rightarrow H(X,Y)$, $x\mapsto p_x$, is a homotopy equivalence.
\end{proposition}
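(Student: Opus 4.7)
The plan is to simply compose the two homotopy equivalences established in the preceding lemmas. First, I would observe that the map in question, $N-Y \to H(X,Y)$ sending $x \mapsto p_x$, factors through the homotopy link of the pair $(N,Y)$: since $r$ takes values in $N$, the path $p_x(t) = r(x,t)$ actually lies in $N$, and by property (iii) of a nearly strict deformation retraction we have $p_x((0,1]) \subseteq N - Y$, while $p_x(0) \in Y$ by property (ii). Thus $p_x \in H(N,Y) \subseteq H(X,Y)$, and the map under consideration is precisely the composition
\begin{equation*}
N - Y \xrightarrow{\ \Psi\ } H(N,Y) \xrightarrow{\ \iota\ } H(X,Y),
\end{equation*}
where $\Psi$ is the map from the first lemma and $\iota$ is the inclusion from the second lemma.

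By the first lemma, $\Psi$ is a homotopy equivalence (with homotopy inverse $e_1$). By the second lemma, which is where the metrisability hypothesis on $X$ enters via \Cref{retricting to a path within N}, the inclusion $\iota$ is also a homotopy equivalence. Hence the composition $\iota \circ \Psi$ is a homotopy equivalence, which is exactly the claim.

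There is essentially no obstacle here; the only thing to verify is that the map factors as claimed, and this is immediate from the definition of a nearly strict deformation retraction. All the technical work has already been done in the two preceding lemmas, so this proposition is a one-line consequence.
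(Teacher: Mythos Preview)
Your proposal is correct and follows exactly the same approach as the paper, which simply states that the result follows by composing the homotopy equivalences from the two preceding lemmas. Your verification that the map factors as $\iota \circ \Psi$ is in fact more explicit than what the paper provides.
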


\section{Constructible \texorpdfstring{$\mathscr{C}$}{C}-valued sheaves}\label{appendix: sheaves}

In this appendix we show that the equivalence
\begin{align*}
\Psi_X\colon \operatorname{Fun}(\exit_\infty(X), \mathcal{S} )\xrightarrow{\simeq} \operatorname{Shv}_{\operatorname{cbl}}(X,\mathcal{S})
\end{align*}
of \cite[Theorem A.9.3]{LurieHA} can be generalised to $\mathscr{C}$-valued sheaves for compactly generated $\mathscr{C}$. Here $\mathcal{S}$ denotes the $\infty$-category of spaces. The equivalence is used in \Cref{constructible derived category} to give an expression of the constructible derived category of sheaves (of $R$-modules) in terms of the exit path $\infty$-category.

\medskip

To anyone with a reasonable grasp of $\infty$-categories, this will be quite rudimentary, but we hope that the level of detail will make the results more accessible to any reader without a background in $\infty$-categories.

\subsection{Sheaves valued in compactly generated \texorpdfstring{$\infty$}{infinity}-categories}\label{C-valued sheaves}

We give a very brief recap of the necessary definitions. We refer to \cite[\S 1.1]{LurieDAG-V} for details on $\mathscr{C}$-valued sheaves (see also \cite[\S 8.5]{Tanaka}). Let $\mathscr{C}$ be a compactly generated $\infty$-category. Then $\mathscr{C}\simeq \operatorname{Ind}(\mathscr{C}_0)$ for a small $\infty$-category $\mathscr{C}_0$ admitting small colimits (see comment at the beginning of \cite[\S 5.5.7]{LurieHTT}).

\begin{definition}
Let $X$ be a topological space and let $\mathscr{U}(X)$ denote the category of open sets of $X$. The $\infty$-category of \textit{$\mathscr{C}$-valued presheaves} on $X$ is the functor $\infty$-category
\begin{align*}
\operatorname{Fun}(N(\mathscr{U}(X))^{\operatorname{op}},\mathscr{C}).
\end{align*}
A presheaf $\mathcal{F}\colon N(\mathscr{U}(X))^{\operatorname{op}}\rightarrow \mathscr{C}$ is a \textit{$\mathscr{C}$-valued sheaf} on $X$ if for any $U\in \mathscr{U}(X)$ and any covering sieve $\{U_\alpha\}$ of $U$, the map
\begin{align*}
\mathcal{F}(U)\rightarrow \varprojlim\mathcal{F}(U_\alpha)
\end{align*}
is an equivalence. We denote the full subcategory of $\operatorname{Fun}(N(\mathscr{U}(X))^{\operatorname{op}},\mathscr{C})$ spanned by the $\mathscr{C}$-valued sheaves by $\operatorname{Shv}(X,\mathscr{C})$. 
\defend
\end{definition}

\begin{remark}
We say that a sheaf $\mathcal{F}\in \operatorname{Shv}(X,\mathscr{C})$ is hypercomplete if it satisfies descent with respect to any hypercovering not just covering sieves (see \cite[\S 6.5.3]{LurieHTT}).
\exend
\end{remark}

\begin{lemma}\label{swapping coefficients for sheaves}
For any topological space $X$ and any compactly generated $\mathscr{C}\simeq \operatorname{Ind}(\mathscr{C}_0)$, there is an equivalence of $\infty$-categories
\begin{align*}
\operatorname{Shv}(X, \mathscr{C})\xrightarrow{\simeq} \operatorname{Fun}^{\operatorname{lex}}(\mathscr{C}_0^{\operatorname{op}}, \operatorname{Shv}(X, \mathcal{S})),
\end{align*}
where the right-hand side, $\operatorname{Fun}^{\operatorname{lex}}(-,-)$, denotes the full subcategory spanned by the functors preserving finite limits.
\end{lemma}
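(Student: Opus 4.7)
The idea is to chain together the Ind-completion identification $\mathscr{C}\simeq \operatorname{Fun}^{\operatorname{lex}}(\mathscr{C}_0^{\operatorname{op}},\mathcal{S})$ with the currying equivalence for $\infty$-categorical functor categories, and then carefully track which subcategories correspond under this manipulation. The first step is to invoke \cite[Proposition 5.3.5.10]{LurieHTT} to identify $\mathscr{C}$ with the full subcategory $\operatorname{Fun}^{\operatorname{lex}}(\mathscr{C}_0^{\operatorname{op}},\mathcal{S}) \subset \operatorname{Fun}(\mathscr{C}_0^{\operatorname{op}},\mathcal{S})$ of finite-limit-preserving functors, and to note that this inclusion preserves small limits (since limits in both sides are computed pointwise in $\mathcal{S}$).

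Next, postcomposition with this inclusion gives a fully faithful functor
\begin{align*}
\operatorname{Fun}(N(\mathscr{U}(X))^{\operatorname{op}},\mathscr{C}) \hookrightarrow \operatorname{Fun}(N(\mathscr{U}(X))^{\operatorname{op}},\operatorname{Fun}(\mathscr{C}_0^{\operatorname{op}},\mathcal{S})),
\end{align*}
and the currying equivalence identifies the codomain with $\operatorname{Fun}(\mathscr{C}_0^{\operatorname{op}},\operatorname{Fun}(N(\mathscr{U}(X))^{\operatorname{op}},\mathcal{S}))$. I would first check that, under this transposition, the essential image consists of functors $F\colon \mathscr{C}_0^{\operatorname{op}} \to \operatorname{Fun}(N(\mathscr{U}(X))^{\operatorname{op}},\mathcal{S})$ that preserve finite limits. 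This follows because limits in a presheaf $\infty$-category are computed pointwise, so $F$ preserves finite limits if and only if the composite with every evaluation functor $\operatorname{ev}_U \colon \operatorname{Fun}(N(\mathscr{U}(X))^{\operatorname{op}},\mathcal{S}) \to \mathcal{S}$ does — which is precisely the condition that the corresponding $\mathcal{F}\colon N(\mathscr{U}(X))^{\operatorname{op}} \to \operatorname{Fun}(\mathscr{C}_0^{\operatorname{op}},\mathcal{S})$ factors through $\operatorname{Fun}^{\operatorname{lex}}(\mathscr{C}_0^{\operatorname{op}},\mathcal{S}) \simeq \mathscr{C}$.

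The remaining and most delicate step is to translate the sheaf condition through this correspondence. Given $\mathcal{F}\colon N(\mathscr{U}(X))^{\operatorname{op}} \to \mathscr{C}$ corresponding to $F\colon \mathscr{C}_0^{\operatorname{op}} \to \operatorname{Fun}(N(\mathscr{U}(X))^{\operatorname{op}},\mathcal{S})$, the presheaf $\mathcal{F}$ is a sheaf iff for every covering sieve $\{U_\alpha\}$ of $U\in\mathscr{U}(X)$, the canonical map $\mathcal{F}(U)\to \varprojlim \mathcal{F}(U_\alpha)$ is an equivalence in $\mathscr{C}$. Since $\mathscr{C}\subset \operatorname{Fun}(\mathscr{C}_0^{\operatorname{op}},\mathcal{S})$ is closed under small limits, computed pointwise, this equivalence holds in $\mathscr{C}$ iff for every $c\in \mathscr{C}_0$ the map $\mathcal{F}(U)(c) \to \varprojlim \mathcal{F}(U_\alpha)(c)$ is an equivalence in $\mathcal{S}$. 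Unravelling the currying, $\mathcal{F}(U)(c) = F(c)(U)$, so this is precisely the condition that $F(c) \in \operatorname{Shv}(X,\mathcal{S})$ for every $c\in \mathscr{C}_0$.

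Putting the two conditions together, the image of $\operatorname{Shv}(X,\mathscr{C})$ under the currying equivalence is the full subcategory of $\operatorname{Fun}(\mathscr{C}_0^{\operatorname{op}},\operatorname{Fun}(N(\mathscr{U}(X))^{\operatorname{op}},\mathcal{S}))$ spanned by functors $F$ that preserve finite limits \emph{and} take values in $\operatorname{Shv}(X,\mathcal{S})$, i.e.\ exactly $\operatorname{Fun}^{\operatorname{lex}}(\mathscr{C}_0^{\operatorname{op}},\operatorname{Shv}(X,\mathcal{S}))$. The main technical point is the compatibility between limits in $\mathscr{C}\simeq \operatorname{Fun}^{\operatorname{lex}}(\mathscr{C}_0^{\operatorname{op}},\mathcal{S})$ and pointwise evaluation at $c\in\mathscr{C}_0$; once this is in place, everything else is a formal manipulation of functor categories.
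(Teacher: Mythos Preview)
Your argument is correct and is essentially the same as the paper's: both identify $\mathscr{C}\simeq \operatorname{Fun}^{\operatorname{lex}}(\mathscr{C}_0^{\operatorname{op}},\mathcal{S})$, apply the currying equivalence to view both sides as full subcategories of $\operatorname{Fun}(N(\mathscr{U}(X))^{\operatorname{op}}\times \mathscr{C}_0^{\operatorname{op}},\mathcal{S})$, and then check that the lex condition and the sheaf condition each transfer correctly because limits in functor categories are computed pointwise. The paper compresses your last two steps into a single citation of \cite[Corollary~5.1.2.3]{LurieHTT}, whereas you spell out the pointwise argument explicitly; the only small point you leave implicit is that the inclusion $\operatorname{Shv}(X,\mathcal{S})\hookrightarrow \operatorname{Fun}(N(\mathscr{U}(X))^{\operatorname{op}},\mathcal{S})$ preserves and reflects finite limits (being a reflective localisation), which is what makes ``preserves finite limits and lands in sheaves'' the same as ``is a lex functor to $\operatorname{Shv}(X,\mathcal{S})$''.
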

\begin{proof}
For any $\infty$-category $\mathscr{D}$, there is an equivalence
\begin{align}\label{swapping coefficients}
\operatorname{Fun}(\mathscr{D}^{\operatorname{op}}, \operatorname{Ind}(\mathscr{C}_0))\xrightarrow{\simeq} \operatorname{Fun}^{\operatorname{lex}}(\mathscr{C}_0^{\operatorname{op}}, \operatorname{Fun}(\mathscr{D}^{\operatorname{op}}, \mathcal{S}));
\end{align}
this is in fact an isomorphism of simplicial sets identifying both sides with subcategories of $\operatorname{Fun}(\mathscr{D}^{\operatorname{op}}\times \mathscr{C}_0^{\operatorname{op}}, \mathcal{S})$. We apply this to (the nerve of) the category of open sets of $X$, $\mathscr{D}=N(\mathscr{U}(X))$, and note that by \cite[Corollary 5.1.2.3]{LurieHTT}, the sheaf condition on the left hand side of the equivalence translates to the sheaf condition on the codomain of the right hand side.
\end{proof}

\begin{remark}
Let $f\colon X\rightarrow Y$ be a map of topological spaces and consider the pushforward and pullback functors of $\mathcal{S}$-valued sheaves
\begin{align*}
\operatorname{Shv}(X,\mathcal{S})\mathop{\leftrightarrows}^{f^*}_{f_*} \operatorname{Shv}(Y,\mathcal{S}).
\end{align*}
Since both $f^*$ and $f_*$ preserve finite limits, postcomposition with these define an adjunction 
\begin{align*}
\operatorname{Fun}^{\operatorname{lex}}(\mathscr{C}_0^{\operatorname{op}}, \operatorname{Shv}(X, \mathcal{S}))\mathop{\leftrightarrows}^{f^*}_{f_*}\operatorname{Fun}^{\operatorname{lex}}(\mathscr{C}_0^{\operatorname{op}}, \operatorname{Shv}(Y, \mathcal{S})).
\end{align*}

Precomposition with the induced functor $\mathscr{U}(Y)\rightarrow \mathscr{U}(X)$ defines a pushforward map
\begin{align*}
f_*\colon \operatorname{Shv}(X, \mathscr{C})\rightarrow \operatorname{Shv}(Y, \mathscr{C}),
\end{align*}
and we have a commutative diagram as below.
\begin{center}
\begin{tikzpicture}
\matrix (m) [matrix of math nodes,row sep=2em,column sep=2em]
  {
	\operatorname{Shv}(X, \mathscr{C}) & \operatorname{Fun}^{\operatorname{lex}}(\mathscr{C}_0^{\operatorname{op}}, \operatorname{Shv}(X, \mathcal{S})) \\
	\operatorname{Shv}(Y, \mathscr{C}) & \operatorname{Fun}^{\operatorname{lex}}(\mathscr{C}_0^{\operatorname{op}}, \operatorname{Shv}(Y, \mathcal{S})) \\
  };
  \path[-stealth] 
	(m-1-1) edge node[above]{$\simeq$} (m-1-2) edge node[left]{$f_*$} (m-2-1)
	(m-2-1) edge node[below]{$\simeq$} (m-2-2)
	(m-1-2) edge node[right]{$f_*$}(m-2-2)
  ;
\end{tikzpicture}
\end{center}

Therefore the left adjoint $f^*$ on the right hand side defines a left adjoint $f^*$ to the pushforward map of $\mathscr{C}$-valued sheaves on the left hand side. See also \cite[Remark 1.1.8]{LurieDAG-V}.
\exend
\end{remark}

\begin{definition}
Let $X$ be a topological space and let $\rho\colon X\rightarrow \ast$ denote the unique map to a point. A sheaf $\mathcal{F}\in \operatorname{Shv}(X,\mathscr{C})$ is \textit{constant} if it is in the essential image of the pullback functor $\rho^*\colon \operatorname{Shv}(\ast,\mathscr{C})\rightarrow \operatorname{Shv}(X, \mathscr{C})$. A sheaf $\mathcal{F}\in \operatorname{Shv}(X,\mathscr{C})$ is \textit{locally constant} if there is an open cover $\{U_\alpha\}$ of $X$ such that the pullback of $\mathcal{F}$ to each $U_\alpha$ is constant.
\defend
\end{definition}

\begin{definition}
Let $X$ be an $I$-stratified space. A sheaf $\mathcal{F}\in \operatorname{Shv}(X,\mathscr{C})$ is \textit{constructible} if the restriction to each $X_i$, $i\in I$, is locally constant. We denote by $\operatorname{Shv}_{\operatorname{cbl}}(X,\mathscr{C})$ the full subcategory of constructible sheaves.
\defend
\end{definition}

Since the equivalence of \Cref{swapping coefficients for sheaves} commutes with pullback functors, it descends to an equivalence of the subcategories of constructible sheaves.

\begin{lemma}\label{swapping coefficients constructible sheaves}
For any $I$-stratified space $X$ and any compactly generated $\mathscr{C}\simeq \operatorname{Ind}(\mathscr{C}_0)$, there is an equivalence
\begin{align*}
\operatorname{Shv}_{\operatorname{cbl}}(X, \mathscr{C})\xrightarrow{\simeq} \operatorname{Fun}^{\operatorname{lex}}(\mathscr{C}_0^{\operatorname{op}}, \operatorname{Shv}_{\operatorname{cbl}}(X, \mathcal{S}))
\end{align*}
\end{lemma}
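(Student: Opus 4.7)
The plan is to show that the equivalence of \Cref{swapping coefficients for sheaves} restricts to the claimed equivalence by verifying that constructibility on the left corresponds precisely to constructibility pointwise in $\mathscr{C}_0$ on the right. The main tool is the compatibility of the equivalence with pullback along the stratum inclusions, which is essentially observed in the remark following \Cref{swapping coefficients for sheaves}.

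First, I would record the following consequence of the naturality of the equivalence \eqref{swapping coefficients} in the variable $\mathscr{D}$: for every continuous map $f\colon X \to Y$, the commutative square of pushforwards exhibited in the remark after \Cref{swapping coefficients for sheaves} has adjoint-compatible pullback squares, so the equivalence intertwines $f^*$ on $\mathscr{C}$-valued sheaves with postcomposition by $f^*$ on $\mathcal{S}$-valued sheaves. Applying this to each stratum inclusion $\iota_i\colon X_i \hookrightarrow X$, I obtain, for each $i\in I$, a homotopy-commutative diagram
\begin{equation*}
\begin{tikzcd}
\operatorname{Shv}(X,\mathscr{C}) \ar[r,"\simeq"] \ar[d,"\iota_i^*"'] & \operatorname{Fun}^{\operatorname{lex}}(\mathscr{C}_0^{\operatorname{op}}, \operatorname{Shv}(X,\mathcal{S})) \ar[d,"\iota_i^* \circ -"] \\
\operatorname{Shv}(X_i,\mathscr{C}) \ar[r,"\simeq"'] & \operatorname{Fun}^{\operatorname{lex}}(\mathscr{C}_0^{\operatorname{op}}, \operatorname{Shv}(X_i,\mathcal{S})).
\end{tikzcd}
\end{equation*}

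Next, I would reduce to a single statement about locally constant sheaves: a sheaf $\mathcal{F}$ corresponding to a functor $F\colon \mathscr{C}_0^{\operatorname{op}} \to \operatorname{Shv}(X,\mathcal{S})$ has the property that $\iota_i^*\mathcal{F}$ is locally constant for every $i$ if and only if $\iota_i^* F(c)$ is locally constant for every $c\in \mathscr{C}_0$ and every $i$. The implication from the pointwise statement to the global statement is exactly the definition of constructibility of $F(c)$, so the equivalence restricts to one between $\operatorname{Shv}_{\operatorname{cbl}}(X,\mathscr{C})$ and functors $F$ taking values in $\operatorname{Shv}_{\operatorname{cbl}}(X,\mathcal{S})$. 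The nontrivial direction is therefore the reflection statement: if $\iota_i^*\mathcal{F}$ is locally constant in $\operatorname{Shv}(X_i,\mathscr{C})$, then for each $c\in \mathscr{C}_0$ the sheaf $\iota_i^* F(c)$ is locally constant in $\operatorname{Shv}(X_i,\mathcal{S})$.

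The hard part is verifying this reflection statement, i.e. understanding how local constancy of a $\mathscr{C}$-valued sheaf is detected on the corresponding lex functor $\mathscr{C}_0^{\operatorname{op}}\to \operatorname{Shv}(X_i,\mathcal{S})$. My approach would be to work locally: local constancy of a sheaf $\mathcal{G}\in\operatorname{Shv}(X_i,\mathscr{C})$ amounts to the existence of an open cover $\{U_\alpha\}$ of $X_i$ such that $\mathcal{G}\vert_{U_\alpha}$ lies in the essential image of $\rho_\alpha^*\colon \operatorname{Shv}(*,\mathscr{C})\to\operatorname{Shv}(U_\alpha,\mathscr{C})$, where $\rho_\alpha\colon U_\alpha\to *$. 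Applying the compatibility square above to both $\iota_i$ and $\rho_\alpha$, together with the tautological identification $\operatorname{Shv}(*,\mathscr{C})\simeq \mathscr{C}$ and the evaluation identification $\operatorname{Fun}^{\operatorname{lex}}(\mathscr{C}_0^{\operatorname{op}},\mathcal{S})\simeq \operatorname{Ind}(\mathscr{C}_0)=\mathscr{C}$, the condition that $\mathcal{G}\vert_{U_\alpha}$ is constant translates exactly into the condition that for every $c\in\mathscr{C}_0$, the $\mathcal{S}$-valued sheaf $F(c)\vert_{U_\alpha}$ is constant. Chaining this through the pullback along $\iota_i$ and using the same cover for each $c$ yields the desired pointwise local constancy, completing the argument.
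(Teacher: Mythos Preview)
Your approach---exploit that the equivalence of \Cref{swapping coefficients for sheaves} commutes with pullback functors and hence carries the constructibility condition across---is exactly the paper's; the paper's own argument is a single sentence to this effect, so you are supplying detail the author omits.

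One point deserves care, though: you treat in detail the implication ``$\mathcal{F}$ constructible $\Rightarrow$ $F(c)$ constructible for every $c$,'' and your argument there is correct (the cover witnessing local constancy of $\iota_i^*\mathcal{F}$ works uniformly for all $c$). But the converse, which you seem to regard as the trivial direction, is not obviously so. If each $F(c)$ is locally constant on $X_i$, the witnessing covers may vary with $c$, and there is no \emph{a priori} reason they refine to a single cover making $\iota_i^*\mathcal{F}$ constant as a $\mathscr{C}$-valued sheaf; equivalently, since $\rho_\alpha^*\colon\mathcal{S}\to\operatorname{Shv}(U_\alpha,\mathcal{S})$ is fully faithful only when $U_\alpha$ has trivial shape, a lex functor landing in its essential image need not lift to a lex functor into $\mathcal{S}$. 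The paper's proof does not address this either, and in the only application (\Cref{exit paths and constructible sheaves equivalence arbitrary coefficients}) the strata are locally contractible, so one may pass to a common basis of contractible opens and the difficulty disappears---but at the stated generality of the lemma this direction should not be dismissed.
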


\begin{definition}\label{constructible compact-valued sheaf}
Let $X$ be an $I$-stratified space. We say that a constructible $\mathscr{C}$-valued sheaf is \textit{compact-valued} if its stalks are compact objects of $\mathscr{C}$. We denote by $\operatorname{Shv}_{\operatorname{cbl,cpt}}(X,\mathscr{C})$ the full subcategory of constructible compact-valued sheaves.
\defend
\end{definition}

\subsection{Exit path \texorpdfstring{$\infty$}{infinity}-categories and constructible \texorpdfstring{$\mathscr{C}$}{C}-valued sheaves}

Using \Cref{swapping coefficients constructible sheaves}, we can generalise Lurie's classification of space-valued constructible sheaves as representations of the exit path $\infty$-category (\cite[Theorem A.9.3]{LurieHA}) to sheaves taking values in compactly generated $\infty$-categories. The result is well-known and not hard to prove, but we have been unable to locate a proof in the literature. See \cite[\S 8.6]{Tanaka} for a sketch of how to generalise the proof of \cite[Theorem A.9.3]{LurieHA} to $\mathscr{C}$-valued sheaves  --- the proof that we present here is relies entirely on Lurie's result rather than generalising the proof of it.

\begin{theorem}\label{exit paths and constructible sheaves equivalence arbitrary coefficients}
Let $X$ be a conically $I$-stratified space which is paracompact and locally contractible, and where $I$ satisfies the ascending chain condition. Let $\mathscr{C}$ be a compactly generated $\infty$-category. Then there is an equivalence of $\infty$-categories
\begin{align*}
\Psi_X\colon \operatorname{Fun}(\exit_\infty(X), \mathscr{C} )\xrightarrow{\simeq} \operatorname{Shv}_{\operatorname{cbl}}(X,\mathscr{C}).
\end{align*}
\end{theorem}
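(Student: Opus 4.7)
The strategy is to bootstrap from the space-valued case by sandwiching Lurie's theorem \cite[Theorem A.9.3]{LurieHA} between two applications of the coefficient-swap equivalence \eqref{swapping coefficients} used in the proof of Lemma~\ref{swapping coefficients for sheaves}. Write $\mathscr{C}\simeq \operatorname{Ind}(\mathscr{C}_0)$ for some small $\infty$-category $\mathscr{C}_0$ admitting small colimits.

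First, I would apply \eqref{swapping coefficients} with $\mathscr{D}^{\operatorname{op}}=\exit_\infty(X)$ in place of $N(\mathscr{U}(X))^{\operatorname{op}}$. Since that equivalence was stated for an arbitrary $\infty$-category $\mathscr{D}$ and is realised as the identification of two full subcategories of $\operatorname{Fun}(\mathscr{D}^{\operatorname{op}}\times \mathscr{C}_0^{\operatorname{op}},\mathcal{S})$, it gives
\[
\operatorname{Fun}(\exit_\infty(X),\mathscr{C})\;\simeq\;\operatorname{Fun}^{\operatorname{lex}}(\mathscr{C}_0^{\operatorname{op}},\operatorname{Fun}(\exit_\infty(X),\mathcal{S})).
\]

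Next, I would invoke \cite[Theorem A.9.3]{LurieHA}, which under the given hypotheses on $X$ and $I$ provides a natural equivalence $\operatorname{Fun}(\exit_\infty(X),\mathcal{S})\simeq \operatorname{Shv}_{\operatorname{cbl}}(X,\mathcal{S})$. Postcomposing with $\operatorname{Fun}^{\operatorname{lex}}(\mathscr{C}_0^{\operatorname{op}},-)$ preserves this equivalence: the induced functor on functor $\infty$-categories is again an equivalence, and since any equivalence of $\infty$-categories preserves and reflects finite limits, the ``lex'' subcategories transport correctly. Thus
\[
\operatorname{Fun}^{\operatorname{lex}}(\mathscr{C}_0^{\operatorname{op}},\operatorname{Fun}(\exit_\infty(X),\mathcal{S}))\;\simeq\;\operatorname{Fun}^{\operatorname{lex}}(\mathscr{C}_0^{\operatorname{op}},\operatorname{Shv}_{\operatorname{cbl}}(X,\mathcal{S})).
\]

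Finally, I would recognise the right-hand side, by Lemma~\ref{swapping coefficients constructible sheaves}, as $\operatorname{Shv}_{\operatorname{cbl}}(X,\mathscr{C})$. Composing the three equivalences produces the desired functor $\Psi_X$.

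The only serious input is Lurie's theorem, which is assumed. The remaining work is bookkeeping, and the main point needing care is that the composite equivalence can be chosen coherently and, on constant functors with value $c\in \mathscr{C}$, recovers the constant sheaf with value $c$; this is an unwinding of definitions using the naturality in $\mathscr{D}$ of \eqref{swapping coefficients} together with the fact that Lurie's equivalence sends the constant functor at a point to the constant sheaf. I do not anticipate any substantive obstacle beyond this formal verification.
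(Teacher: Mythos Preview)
Your proposal is correct and is essentially identical to the paper's proof: the paper also applies the coefficient-swap equivalence \eqref{swapping coefficients} with $\mathscr{D}^{\operatorname{op}}=\exit_\infty(X)$, postcomposes with Lurie's equivalence under $\operatorname{Fun}^{\operatorname{lex}}(\mathscr{C}_0^{\operatorname{op}},-)$, and then applies Lemma~\ref{swapping coefficients constructible sheaves}. Your closing remark about constant functors is extra commentary not present in the paper's proof, but it does not alter the argument.
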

\begin{proof}
Let $\mathscr{C}_0$ denote the $\infty$-category of compact objects of $\mathscr{C}$. We get a sequence of equivalences
\begin{align*}
\operatorname{Fun}(\exit_\infty(X), \operatorname{Ind}(\mathscr{C}_0)) &\simeq\operatorname{Fun}^{\operatorname{lex}}(\mathscr{C}_0^{\operatorname{op}}, \operatorname{Fun}(\exit_\infty(X), \mathcal{S})) \\
&\xrightarrow{\simeq}\operatorname{Fun}^{\operatorname{lex}}(\mathscr{C}_0^{\operatorname{op}}, \operatorname{Shv}_{\operatorname{cbl}}(X, \mathcal{S}))\simeq \operatorname{Shv}_{\operatorname{cbl}}(X, \operatorname{Ind}(\mathscr{C}_0))
\end{align*}
by applying (\ref{swapping coefficients}) from the proof of \Cref{swapping coefficients for sheaves} to $\mathscr{D}=\exit_\infty(X)$ and combining this with the equivalences of \Cref{swapping coefficients constructible sheaves} and \cite[Theorem A.9.3]{LurieHA}.
\end{proof}

We have the following naturality statement generalising \cite[Proposition A.9.16]{LurieHA}.

\begin{proposition}\label{naturality}
Let $X\rightarrow I$ and $Y\rightarrow J$ be paracompact, locally contractible conically stratified spaces with $J\subset I$ and where $I$ satisfies the ascending chain condition. Let $\mathscr{C}$ be a compactly generated $\infty$-category. For any stratum preserving map $f\colon Y\rightarrow X$ which on posets is given by the inclusion, there is an equivalence $\phi_{Y,X}\colon \Psi_Y \circ f^*\Rightarrow f^*\circ \Psi_X$ as in the diagram below, which in particular commutes up to homotopy.
\begin{center}
\begin{tikzpicture}
\matrix (m) [matrix of math nodes,row sep=2em,column sep=2em]
  {
	\operatorname{Fun}(\exit_\infty(X),\mathscr{C}) & \operatorname{Shv}_{\operatorname{cbl}}(X, \mathscr{C}) \\
	\operatorname{Fun}(\exit_\infty(Y),\mathscr{C}) & \operatorname{Shv}_{\operatorname{cbl}}(Y, \mathscr{C}) \\
  };
  \path[-stealth] 
	(m-1-1) edge node[above]{$\Psi_X$} (m-1-2) edge node[left]{$f^*$} (m-2-1)
	(m-2-1) edge node[below]{$\Psi_Y$} (m-2-2)
	(m-1-2) edge node[right]{$f^*$}(m-2-2)
  ;
  	\path[shorten >=0.7cm,shorten <=0.7cm,-implies]
	(m-2-1) edge[double equal sign distance] node[below right]{$\phi_{Y,X}$} (m-1-2)
  ;
\end{tikzpicture}
\end{center}
\end{proposition}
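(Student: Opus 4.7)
The plan is to reduce to the case $\mathscr{C}=\mathcal{S}$, which is \cite[Proposition A.9.16]{LurieHA}. Write $\mathscr{C}\simeq \operatorname{Ind}(\mathscr{C}_0)$ with $\mathscr{C}_0$ the full subcategory of compact objects, and recall from the proof of \Cref{exit paths and constructible sheaves equivalence arbitrary coefficients} that $\Psi_X$ factors as the composite
\begin{align*}
\operatorname{Fun}(\exit_\infty(X),\mathscr{C}) \xrightarrow{\simeq} \operatorname{Fun}^{\operatorname{lex}}(\mathscr{C}_0^{\operatorname{op}},\operatorname{Fun}(\exit_\infty(X),\mathcal{S})) \xrightarrow{\Psi_X^{\mathcal{S}}\circ -} \operatorname{Fun}^{\operatorname{lex}}(\mathscr{C}_0^{\operatorname{op}},\operatorname{Shv}_{\operatorname{cbl}}(X,\mathcal{S})) \xrightarrow{\simeq} \operatorname{Shv}_{\operatorname{cbl}}(X,\mathscr{C}),
\end{align*}
where $\Psi_X^{\mathcal{S}}$ denotes Lurie's space-valued equivalence from \cite[Theorem A.9.3]{LurieHA}; the analogous factorisation holds over $Y$.

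Next, I would verify that the two pullback functors $f^*$ in the proposition translate, under the equivalences of \Cref{swapping coefficients for sheaves}, \Cref{swapping coefficients constructible sheaves} and (\ref{swapping coefficients}), into post-composition with the corresponding $\mathcal{S}$-valued functors $f^*\colon \operatorname{Shv}_{\operatorname{cbl}}(X,\mathcal{S})\rightarrow \operatorname{Shv}_{\operatorname{cbl}}(Y,\mathcal{S})$ and $f^*\colon \operatorname{Fun}(\exit_\infty(X),\mathcal{S})\rightarrow \operatorname{Fun}(\exit_\infty(Y),\mathcal{S})$. For the sheaf side this is precisely the commutative square in the remark following \Cref{swapping coefficients for sheaves}; for the exit-path side it is tautological, since (\ref{swapping coefficients}) is given by restriction along $\mathscr{C}_0$ and hence commutes with precomposition in the $\exit_\infty$-variable. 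Lex-ness is preserved in both identifications, so the post-composition functors descend to the relevant subcategories $\operatorname{Fun}^{\operatorname{lex}}(\mathscr{C}_0^{\operatorname{op}},-)$.

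Under these identifications, the square in the statement becomes the image under the $2$-functor $\operatorname{Fun}^{\operatorname{lex}}(\mathscr{C}_0^{\operatorname{op}},-)$ of the analogous square for $\mathcal{S}$-valued sheaves, which is filled by Lurie's natural equivalence $\phi^{\mathcal{S}}_{Y,X}\colon \Psi_Y^{\mathcal{S}}\circ f^*\Rightarrow f^*\circ \Psi_X^{\mathcal{S}}$ of \cite[Proposition A.9.16]{LurieHA}. Since $\operatorname{Fun}^{\operatorname{lex}}(\mathscr{C}_0^{\operatorname{op}},-)$ carries natural equivalences to natural equivalences, whiskering $\phi^{\mathcal{S}}_{Y,X}$ with the coefficient-swap equivalences produces the desired $\phi_{Y,X}$.

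The main obstacle is the $\infty$-categorical bookkeeping needed to compare $f^*$ across the coefficient swaps strictly, rather than merely up to unspecified higher coherence. This turns out to be unproblematic precisely because, as noted in the proof of \Cref{swapping coefficients for sheaves}, the equivalence (\ref{swapping coefficients}) is an isomorphism of simplicial sets identifying both sides with the same subcategory of $\operatorname{Fun}(\mathscr{D}^{\operatorname{op}}\times \mathscr{C}_0^{\operatorname{op}},\mathcal{S})$; pullback along $f$ corresponds on both sides strictly to precomposition in the $\mathscr{D}$-variable, so no additional coherence data need be constructed by hand.
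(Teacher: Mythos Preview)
Your proposal is correct and follows essentially the same route as the paper: reduce to the $\mathcal{S}$-valued case via \cite[Proposition A.9.16]{LurieHA} and then transport the resulting natural equivalence through the coefficient-swap equivalences by applying $\operatorname{Fun}^{\operatorname{lex}}(\mathscr{C}_0^{\operatorname{op}},-)$. The paper additionally notes that Lurie's $\Psi_X^{\mathcal{S}}$ itself decomposes into three equivalences of which only the last requires A.9.16, but otherwise your argument and the paper's coincide, including the observation that (\ref{swapping coefficients}) is an isomorphism of simplicial sets so that no extra coherence work is needed.
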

\begin{proof}
The equivalence of \cite[Theorem A.9.3]{LurieHA} is the composite of three equivalences, the first two of which are natural. For the third one, Proposition A.9.16 of \cite{LurieHA} provides the desired equivalence of functors for $\mathcal{S}$-valued sheaves: 
\begin{center}
\begin{tikzpicture}
\matrix (m) [matrix of math nodes,row sep=2em,column sep=2em]
  {
	\operatorname{Fun}(\exit_\infty(X),\mathcal{S}) & \operatorname{Shv}_{\operatorname{cbl}}(X, \mathcal{S}) \\
	\operatorname{Fun}(\exit_\infty(Y),\mathcal{S}) & \operatorname{Shv}_{\operatorname{cbl}}(Y, \mathcal{S}) \\
  };
  \path[-stealth] 
	(m-1-1) edge node[above]{$\Psi_X$} (m-1-2) edge node[left]{$f^*$} (m-2-1)
	(m-2-1) edge node[below]{$\Psi_Y$} (m-2-2)
	(m-1-2) edge node[right]{$f^*$}(m-2-2)
	;
	\path[shorten >=0.7cm,shorten <=0.7cm,-implies]
	(m-2-1) edge[double equal sign distance] (m-1-2)
  ;
\end{tikzpicture}
\end{center}
Applying $\operatorname{Fun}^{\operatorname{lex}}(\mathscr{C}_0^{\operatorname{op}},-)$ to this diagram and noting that the equivalence of \Cref{swapping coefficients constructible sheaves} commutes with pullbacks (by definition of the pullback functor) and the equivalence (\ref{swapping coefficients}) in the proof of \Cref{swapping coefficients for sheaves} is natural, we obtain the desired equivalence $\phi_{Y,X}\colon \Psi_Y \circ f^*\Rightarrow f^*\circ \Psi_X$ for $\mathscr{C}$-valued sheaves.
\end{proof}

\begin{corollary}\label{stalks vs values}
Let $X$ be a paracompact, locally contractible conically $I$-stratified space with $I$ satisfying the ascending chain condition, and let $\mathscr{C}$ be a compactly generated $\infty$-category. Let $F\colon \exit_\infty(X)\rightarrow \mathscr{C}$ and $\mathscr{F}:=\Psi_X(F)\in \operatorname{Shv}_{\operatorname{cbl}}(X,\mathscr{C})$. For all $x\in X$, there is an equivalence $\mathscr{F}_x\xrightarrow{\simeq}F(x)$ in $\mathscr{C}$.
\end{corollary}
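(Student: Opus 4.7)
The plan is to apply the naturality statement of \Cref{naturality} to the inclusion $\iota_x\colon \{x\}\hookrightarrow X$ of the one-point subspace. Let $i\in I$ denote the stratum containing $x$, so that $\{x\}$ is (trivially) conically stratified over $\{i\}\subseteq I$ and $\iota_x$ is a stratum-preserving map whose induced map of posets is the inclusion. Since $\{x\}$ is paracompact and locally contractible, the hypotheses of \Cref{naturality} are satisfied, and we obtain a canonical equivalence
\begin{align*}
\phi_{\{x\},X}(F)\colon \Psi_{\{x\}}(\iota_x^* F) \xrightarrow{\ \simeq\ } \iota_x^* \Psi_X(F)=\iota_x^*\mathscr{F}
\end{align*}
in $\operatorname{Shv}_{\operatorname{cbl}}(\{x\},\mathscr{C})$.

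Next I would identify both sides explicitly. On the combinatorial side, $\exit_\infty(\{x\})\cong \Delta^0$ is the terminal $\infty$-category, so evaluation at the unique $0$-simplex gives an equivalence $\operatorname{ev}\colon \operatorname{Fun}(\exit_\infty(\{x\}),\mathscr{C}) \xrightarrow{\simeq}\mathscr{C}$. The pullback $\iota_x^* F$ is precomposition with the induced simplicial map $\exit_\infty(\{x\})\to \exit_\infty(X)$ selecting the $0$-simplex $x$, so $\operatorname{ev}(\iota_x^* F)=F(x)$. On the sheaf side, $\operatorname{Shv}_{\operatorname{cbl}}(\{x\},\mathscr{C})\simeq \operatorname{Shv}(\{x\},\mathscr{C})\simeq \mathscr{C}$ via global sections, and under this equivalence $\iota_x^*\mathscr{F}$ corresponds to the stalk $\mathscr{F}_x=\varinjlim_{x\in U}\mathscr{F}(U)$ by the standard description of the pullback of a sheaf to a point (this can be checked first for $\mathcal{S}$-valued sheaves and then transferred to $\mathscr{C}$-valued sheaves via \Cref{swapping coefficients constructible sheaves} using $\mathscr{C}\simeq \operatorname{Ind}(\mathscr{C}_0)$).

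Finally, one checks that under these two evaluation equivalences, the functor $\Psi_{\{x\}}$ corresponds to the identity on $\mathscr{C}$: in the $\mathcal{S}$-valued case this is the content of \cite[Theorem A.9.3]{LurieHA} applied to a point (where both the source and target of $\Psi_{\{x\}}$ reduce to $\mathcal{S}$), and the $\mathscr{C}$-valued case follows by passing through the equivalence of \Cref{swapping coefficients constructible sheaves} and the proof of \Cref{exit paths and constructible sheaves equivalence arbitrary coefficients}. Combining with the equivalence $\phi_{\{x\},X}(F)$ produces the desired equivalence $F(x)\simeq \mathscr{F}_x$ in $\mathscr{C}$.

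The main obstacle is essentially bookkeeping: verifying that $\Psi_{\{x\}}$ really becomes the identity after the two canonical trivialisations on a point, and that the pullback $\iota_x^*$ on $\mathscr{C}$-valued sheaves computes the stalk in the expected way. Both are standard once one unwinds the definition of $\Psi_X$ given in the proof of \Cref{exit paths and constructible sheaves equivalence arbitrary coefficients} via \Cref{swapping coefficients constructible sheaves}, but neither was spelled out explicitly in the body of the paper, so some care is needed to assemble the pieces.
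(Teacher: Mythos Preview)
Your proposal is correct and follows essentially the same approach as the paper's proof: apply \Cref{naturality} to the inclusion of the point $x$, observe that $\Psi$ over a one-point space is (equivalent to) the identity on $\mathscr{C}$, and read off the equivalence $\mathscr{F}_x\simeq F(x)$. The paper is slightly more economical in that it identifies $\Psi_\ast$ with the identity directly from \cite[Construction A.9.2]{LurieHA} rather than appealing to the statement of Theorem A.9.3 over a point, but this is a matter of citation rather than strategy.
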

\begin{proof}
For the one point space $\ast$, the equivalence $\Psi_\ast\colon \operatorname{Fun}(\exit_\infty(\ast),\mathcal{S}) \rightarrow \operatorname{Shv}(\ast, \mathcal{S})$ sends a Kan complex $Y$ to the Kan complex $\operatorname{Fun}(\ast, Y)\cong Y$ (see \cite[Construction A.9.2]{LurieHA}). It follows that
\begin{align*}
\Psi_\ast\colon \operatorname{Fun}(\exit_\infty(\ast),\mathscr{C}) \rightarrow \operatorname{Shv}(\ast, \mathscr{C})
\end{align*}
is equivalent to the identity on $\mathscr{C}$. Applying \Cref{naturality} to the map $x\colon \ast\rightarrow X$ sending $\ast$ to $x\in X$ provides an equivalence $\mathscr{F}_x\xrightarrow{\simeq}F(x)$ in $\mathscr{C}$.
\end{proof}

The following is an immediate consequence of \Cref{stalks vs values}.

\begin{corollary}\label{exit paths and constructible sheaves equivalence arbitrary coefficients with compact stalks}
Suppose $X$ is a conically $I$-stratified space which is paracompact and locally contractible, and where $I$ satisfies the ascending chain condition. Let $\mathscr{C}$ be a compactly generated $\infty$-category and let $\mathscr{C}_0$ denote the subcategory of compact objects. Then the equivalence of \Cref{exit paths and constructible sheaves equivalence arbitrary coefficients} restricts to an equivalence
\begin{align*}
\Psi_X\colon \operatorname{Fun}(\exit_\infty(X), \mathscr{C}_0)\xrightarrow{\simeq} \operatorname{Shv}_{\operatorname{cbl,cpt}}(X,\mathscr{C}).
\end{align*}
\end{corollary}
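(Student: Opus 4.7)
The plan is to derive the restricted equivalence immediately from the stalk identification already packaged in \Cref{stalks vs values}, together with the observation that both sides are full subcategories cut out by the same pointwise/stalkwise condition.

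First I would note that, since $\mathscr{C}_0 \subseteq \mathscr{C}$ is a full subcategory, the inclusion
\begin{align*}
\operatorname{Fun}(\exit_\infty(X), \mathscr{C}_0) \hookrightarrow \operatorname{Fun}(\exit_\infty(X), \mathscr{C})
\end{align*}
is the full subcategory spanned by those functors $F$ for which $F(x) \in \mathscr{C}_0$ for every object $x$ of $\exit_\infty(X)$, i.e.\ for every point $x\in X$. Similarly, by \Cref{constructible compact-valued sheaf}, the inclusion
\begin{align*}
\operatorname{Shv}_{\operatorname{cbl,cpt}}(X,\mathscr{C}) \hookrightarrow \operatorname{Shv}_{\operatorname{cbl}}(X,\mathscr{C})
\end{align*}
is the full subcategory spanned by those constructible sheaves $\mathscr{F}$ whose stalks $\mathscr{F}_x$ are compact for all $x \in X$. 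Since $\Psi_X$ is already an equivalence of the ambient categories by \Cref{exit paths and constructible sheaves equivalence arbitrary coefficients}, it suffices to show that these two full subcategories correspond to one another under $\Psi_X$ and its quasi-inverse.

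Next I would invoke \Cref{stalks vs values}: for any $F\colon \exit_\infty(X) \to \mathscr{C}$ with associated sheaf $\mathscr{F} := \Psi_X(F)$, there is a natural equivalence $\mathscr{F}_x \simeq F(x)$ in $\mathscr{C}$ for every $x \in X$. Since the property of being a compact object of $\mathscr{C}$ is invariant under equivalence, this immediately gives that $\mathscr{F}_x$ is compact for all $x \in X$ if and only if $F(x)$ is compact for all $x \in X$. Combined with the previous paragraph, the equivalence $\Psi_X$ therefore sends $\operatorname{Fun}(\exit_\infty(X), \mathscr{C}_0)$ into $\operatorname{Shv}_{\operatorname{cbl,cpt}}(X,\mathscr{C})$, and, running the argument through the quasi-inverse, the converse inclusion holds as well. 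Hence $\Psi_X$ restricts to the asserted equivalence.

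There is essentially no obstacle: the whole content has been front-loaded into \Cref{stalks vs values}, and the remaining argument is purely formal, since both subcategories in question are full subcategories cut out by the very same condition on values/stalks which is preserved by the natural equivalence $\mathscr{F}_x \simeq F(x)$.
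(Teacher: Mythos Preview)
Your proposal is correct and follows exactly the paper's approach: the paper simply declares the corollary to be an immediate consequence of \Cref{stalks vs values}, and you have spelled out precisely the formal argument that makes this ``immediate'' --- namely that both sides are full subcategories cut out by the same pointwise compactness condition, which is matched under $\Psi_X$ by the stalk identification $\mathscr{F}_x \simeq F(x)$.
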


\bibliographystyle{alpha}
\bibliography{biblio}

\end{document}